\font\msbm=msbm10
\numberwithin{equation}{section}
\theoremstyle{plain}
\newtheorem{theorem}{Theorem}[section]
\newtheorem{lemma}[theorem]{Lemma}
\newtheorem{corollary}[theorem]{Corollary}
\newtheorem{condition}[theorem]{Condition}
\newtheorem{proposition}[theorem]{Proposition}
\newtheorem{definition}{Definition}[section]
\newtheorem{conj}[theorem]{Conjecture}
\newtheorem{remark}[theorem]{Remark}
\def\mathbb#1{\hbox{\msbm{#1}}}
\newcommand{\ba}{\boldsymbol{a}}
\newcommand{\bc}{\boldsymbol{c}}
\newcommand{\br}{\boldsymbol{r}}
\newcommand{\bu}{\boldsymbol{u}}
\newcommand{\bv}{\boldsymbol{v}}
\newcommand{\bw}{\boldsymbol{w}}
\newcommand{\bx}{\boldsymbol{x}}
\newcommand{\by}{\boldsymbol{y}}
\newcommand{\bz}{\boldsymbol{z}}
\newcommand{\bone}{\boldsymbol{1}}
\newcommand{\blambda}{\boldsymbol{\lambda}}
\newcommand{\balpha}{\boldsymbol{\alpha}}
\newcommand{\bgamma}{\boldsymbol{\gamma}}
\newcommand{\bmu}{\boldsymbol{\mu}}
\newcommand{\bSigma}{\boldsymbol{\Sigma}}
\newcommand{\BA}{\boldsymbol{A}}
\newcommand{\BB}{\boldsymbol{B}}
\newcommand{\BD}{\boldsymbol{D}}
\newcommand{\BE}{\boldsymbol{E}}
\newcommand{\BF}{\boldsymbol{F}}
\newcommand{\BG}{\boldsymbol{G}}
\newcommand{\BI}{\boldsymbol{I}}
\newcommand{\BJ}{\boldsymbol{J}}
\newcommand{\BM}{\boldsymbol{M}}
\newcommand{\BQ}{\boldsymbol{Q}}
\newcommand{\BS}{\boldsymbol{S}}
\newcommand{\BU}{\boldsymbol{U}}
\newcommand{\BV}{\boldsymbol{V}}
\newcommand{\BW}{\boldsymbol{W}}
\newcommand{\BX}{\boldsymbol{X}}
\newcommand{\BY}{\boldsymbol{Y}}
\newcommand{\BZ}{\boldsymbol{Z}}
\newcommand{\bphi}{\boldsymbol{\phi}}
\newcommand{\bzero}{\boldsymbol{0}}
\newcommand{\bPsi}{\boldsymbol{\Psi}}
\newcommand{\A}{\mathcal{A}}
\newcommand{\PP}{\mathcal{P}}
\newcommand{\pa}{\partial}
\newcommand{\I}{\boldsymbol{I}}
\newcommand{\RR}{\mathbb{R}}
\newcommand{\lag}{\langle}
\newcommand{\rag}{\rangle}
\newcommand{\TB}{T^{\bot}}
\renewcommand{\Pr}{\mathbb{P}}
\DeclareMathOperator{\Tr}{Tr}
\DeclareMathOperator{\E}{\mathbb{E}}
\DeclareMathOperator{\diag}{diag}
\newenvironment{longtable}[2][c]
   {\table[h]
    \setbox0=\vbox\bgroup \def\caption##1{\omit\gdef\captiontext{##1}}
    \ifx c#1\centering\fi \ifx l#1\raggedright\fi \ifx r#1\raggedleft\fi
    \tabular{#2}
   }
   {\endtabular
    \egroup
    \ifx\captiontext\undefined \else 
       \caption{\captiontext}\par\medskip \global\let\captiontext=\undefined \fi
    \unvbox0 
    \endtable
   }
\newcommand{\vct}[1]{\bm{#1}}
\newcommand{\mtx}[1]{\bm{#1}}
\definecolor{xl}{RGB}{200,50,120}
\begin{document}
\title{\bf When Do Birds of a Feather Flock Together? $k$-Means, Proximity, and Conic Programming}

\author{Xiaodong Li\thanks{Department of Statistics, University of California Davis, Davis CA 95616}, Yang Li\thanks{Department of Mathematics, University of California Davis, Davis CA 95616}, Shuyang Ling\thanks{Courant Institute of Mathematical Sciences and Center for Data Science, New York NY 10012}, Thomas Strohmer$^{\dagger}$, and Ke Wei\thanks{School of Data Science, Fudan University, Shanghai, China, 200433}}

\maketitle

\begin{abstract}
Given a set of data, one central goal is to group them into clusters based on some notion of similarity between the individual objects. One of the most popular and widely-used approaches is $k$-means despite the computational hardness to find its global minimum.
We study and compare the properties of different convex relaxations by relating them to corresponding proximity conditions, an idea originally introduced by Kumar and Kannan. Using conic duality theory, we present an improved proximity condition under which the Peng-Wei relaxation of $k$-means recovers the underlying clusters exactly. Our proximity condition improves upon Kumar and Kannan and is comparable to that of Awashti and Sheffet, where proximity conditions are established for projective $k$-means. In addition, we provide a necessary proximity condition for the exactness of the Peng-Wei relaxation. For the special case of equal cluster sizes, we establish a different and completely localized proximity condition under which the Amini-Levina relaxation yields exact clustering, thereby having addressed an open problem by Awasthi and Sheffet in the balanced case. 

Our framework is not only deterministic and model-free but also comes with a clear geometric meaning which allows for further analysis and generalization.  Moreover, it can be conveniently applied to analyzing various data generative models such as the stochastic ball models and Gaussian mixture models. With this method, we improve the current minimum separation bound for the stochastic ball models and achieve the state-of-the-art results of learning Gaussian mixture models. 
\end{abstract}


\section{Introduction}

$k$-means clustering is one of the most well-known and widely-used clustering methods in unsupervised learning. Given $N$ data points in $\RR^m$, the goal is to partition them into $k$ clusters by minimizing the total squared distance between each data point and the corresponding cluster center. It is a problem related to Voronoi tessellations~\cite{DuFG99}. However,  $k$-means is combinatorial in nature since it  is essentially equivalent to an integer programming problem~\cite{Selim84}. Thus, minimizing the $k$-means objective function turns out to be an NP-hard problem, even if there are only two clusters~\cite{AloiseDPP09} or if the data points are on a $2$D plane~\cite{MahNV09}. 

Despite its hardness, numerous efforts have been made to develop effective and efficient heuristic algorithms to handle the $k$-means problem in practice. 
A famous example is  Lloyd's algorithm~\cite{Lloyd82} which was originally introduced for vector quantization and then became popular in data clustering due to its high efficiency and simplicity of implementation. One of the earliest convergence analyses of  Lloyd's 
algorithm was given by Selim and Ismail~\cite{Selim84}: Under certain conditions, the 
algorithm converges to  a stationary point
within a finite number of iterations but may fail to converge to a local minimum. A smoothed analysis given by Arthur, Manthey and Roglin~\cite{ArthurMR11} shows that the smoothed/expected number of iterations is bounded polynomially by $N$, $k$ and $m$ while the worst-case running time can be $2^{\Omega(N)}$ even for the case when data points are on a plane \cite{Vattani11}.

We are particularly interested in the semidefinite programming (SDP) relaxation for $k$-means by Peng and Wei \cite{PengW07}, who observed that the $k$-means objective function can be written as the inner product between a projection matrix and a distance matrix constructed from the data, and the combinatorial constraints of the projection matrix can be convexified. Thus, whenever the Peng-Wei relaxation produces an output corresponding to a partition of the data set, the $k$-means problem is solved in polynomial time~\cite{Wright1997}. The details of the Peng-Wei relaxation will be explained in \prettyref{sec:prelim}.

Theoretical properties of the Peng-Wei relaxation have also been studied under specific stochastic models in the literature. {\em Minimum separation conditions} were established in~\cite{AwasBCKVW15,IguchiMPV17} to guarantee exact clustering for the stochastic ball models with balanced clusters (i.e., each cluster has the same number of points), while a similar study was conducted in~\cite{MixonVW17} for the Gaussian mixture model.

Despite these efforts, the Peng-Wei relaxation is not yet thoroughly understood. Several fundamental questions of vital importance remain unexplored or require better answers, such as
\emph{
\begin{itemize}
\setlength{\itemsep}{-0.3ex}
\item How do the number of clusters and the data dimension affect the performance of the Peng-Wei relaxation?
\item How does the performance of the Peng-Wei relaxation depend on the balancedness of the cluster sizes and covariance structures within each cluster?
\item Can the global minimum separation condition be localized?
\item Under the special case of equal cluster sizes, does the tighter Amini-Levina relaxation~\cite{AminiL18} improve the Peng-Wei relaxation? If so, in which sense?
\end{itemize}}

The studies in~\cite{AwasBCKVW15,IguchiMPV17,MixonVW17} reveal certain information about the Peng-Wei relaxation based on the assumption of sufficient minimum center separation: guaranteed exact recovery in the case of the stochastic ball model~\cite{AwasBCKVW15,IguchiMPV17} and learning of centers for the Gaussian mixture model~\cite{MixonVW17}. The price to obtain such information, the requirement imposed upon the minimum center separation, is the homogeneity of the criteria forced on all different clusters. In other words, each pair of clusters, regardless of their shapes and cardinalities, must have their centers separated by a uniform distance determined by the {\em entire data set}. 
As a consequence of this ``global'' condition, the effect of an isolated but huge cluster ripples throughout the entire data set by raising the minimum center separation. Thus, a more ``localized'' condition, i.e., a condition on  the  center separation for each pair of clusters that relies largely on local information, is much desired. Such a more localized condition might pave the way to address the aforementioned fundamental questions regarding the Peng-Wei relaxation.

To that end, in this paper we introduce a proximity condition enabling us to relate the pairwise center distances to more localized quantities. Interestingly, it turns out that our proximity condition improves the one in~\cite{KumarK10} and is comparable to that in~\cite{AwasthiS12}, the state-of-the-art proximity conditions in the literature of SVD-based projective $k$-means. Furthermore, under the Amini-Levina relaxation for clusters of equal cardinality, the associated proximity condition becomes even ``fully localized'', as it {\em only} involves information about pairs of clusters.

\subsection{Organization of our paper}
Our paper is organized as follows. In the remainder of this introductory section we present our aforementioned proximity condition, discuss its implication for various stochastic cluster models and briefly compare our results to the state of the art. In Section~\ref{sec:prelim}, we discuss $k$-means and its convex relaxation introduced by Peng and Wei.  In Section~\ref{sec:main}, we show that the Peng-Wei relaxation yields the solution of the $k$-means objective as long as our proximity condition \eqref{eq:prox} is satisfied. A different proximity condition for the exactness of Amini-Levina relaxation is discussed in the same section. In \prettyref{sec:sbm_and_gmm}, we consider the application of our framework to the stochastic ball model and the Gaussian mixture model. 
Numerical simulations that illustrate our theoretical findings are presented in Section~\ref{sec:numerics}. 
All proofs can be found in Sections~\ref{sec:proof4det}--\ref{sec:pf_rand}.

\subsection{Proximity conditions under deterministic models} 
The idea of proximity conditions originates from the work~\cite{KumarK10} by Kumar and Kannan who use a proximity condition to characterize the performance of Lloyd's algorithm with an initialization given by an SVD-based projection under deterministic models. The result is later improved by Awasthi and Sheffet~\cite{AwasthiS12}, who perform a finer analysis and redesign the proximity condition for the same algorithm. To the best of our knowledge, no such type of proximity conditions has been established for the Peng-Wei relaxation so far, and we will fill this gap in this paper. 

Conceptually speaking, our proximity condition can be interpreted as follows:
\begin{center}
\emph{For each pair of clusters, every point is closer to the center of its own cluster, while the bisector hyperplane of the centers keeps all points in the two clusters at a certain distance determined by global information of the data set.}
\end{center}
Roughly speaking, the proximity condition characterizes for each pair of clusters how much closer each point is to the within-cluster center than the cross-cluster center. This is conceptually much more localized than minimum separation, which compares all pairwise center distances to a uniform quantity.

Let us introduce some necessary notation before we proceed to the exact statement of our proximity condition. Given a set of $N$ data points  $\Gamma=\{\bx_l\}_{l=1}^N$ with $k$ mutually disjoint clusters $\Gamma=\sqcup_{a=1}^k\Gamma_a$, we can re-index $\vct{x}_1, \ldots, \vct{x}_N$ according to the clusters: $\Gamma_a=\{\bx_{a,i}\}_{1\leq i\leq n_a}$ for all $1\leq a\leq k$. Denote by $n_a=|\Gamma_a|$ the number of elements in $\Gamma_a$.

Denote the data matrix of the $a$-th cluster by
\[
\BX_a^\top =\begin{bmatrix} \vct{x}_{a, 1} & \ldots & \vct{x}_{a, n_a} \end{bmatrix} \in \RR^{m \times n_a}.
\]
Furthermore, define
\begin{equation*}
\bc_a = \frac{1}{n_a}\sum_{i=1}^{n_a}\bx_{a,i}, \quad \bw_{a,b} = \frac{\bc_b - \bc_a}{\|\bc_b - \bc_a\|},\quad\mbox{and}\quad \overline{\BX}_a = \BX_a - \bone_{n_a} \bc_a^{\top}.
\end{equation*}
In other words,  $\bc_a$ is the sample mean (cluster center) of the $a$-th cluster, $\bw_{a,b}$ is the unit vector pointing from $\bc_a$ to $\bc_b$, and $\overline{\BX}_a$ is the centered data matrix of the $a$-th cluster. Now we are ready to give a mathematical characterization of the proximity condition.
\begin{condition}[\bf Proximity condition]\label{cond:prox}
The partition $\Gamma=\sqcup_{a=1}^k\Gamma_a$ satisfies the proximity condition if for any $a\neq b$, there holds
\begin{equation}\label{eq:prox}
\min_{1\leq i\leq n_a} \left\lag \bx_{a,i} - \frac{\bc_a + \bc_b}{2}, \bw_{b,a}\right\rag > \frac{1}{2}\sqrt{\left(\sum_{l=1}^k\|\overline{\BX}_l\|^2\right)\left( \frac{1}{n_a} + \frac{1}{n_b} \right) }.
\end{equation}
Here, $\|\overline{\BX}_l\|$ is the operator norm of the matrix $\overline{\BX}_l$.
\end{condition}

\begin{figure}[ht!]
\centering
\includegraphics[width=129mm]{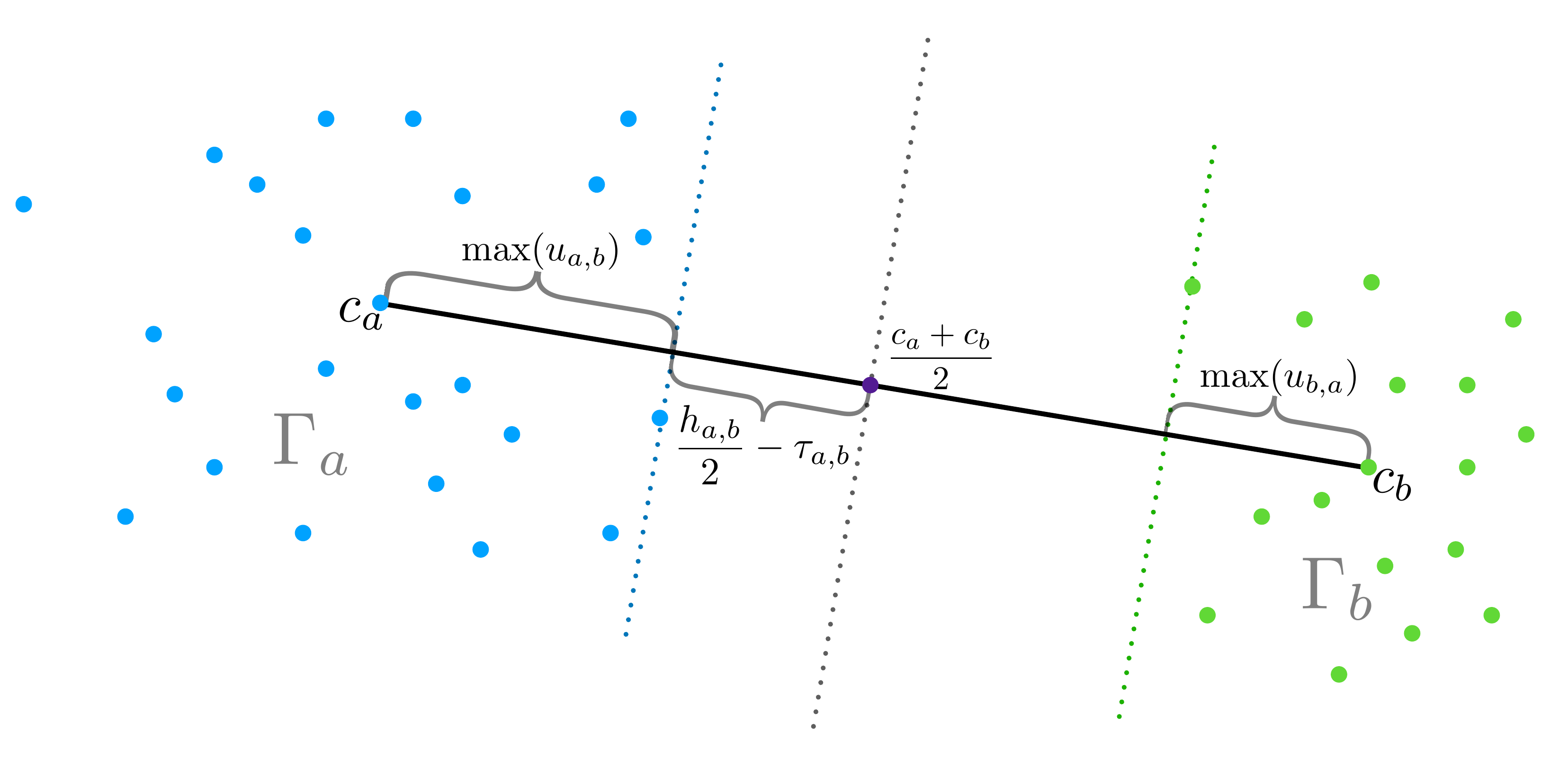}
\caption{Proximity condition: If the partition of data points satisfies the proximity condition, then each pair of clusters $\Gamma_a$ and $\Gamma_b$ can be separated by a plane through the bisector of their sample means $\bc_a$ and $\bc_b$, and the distance between each individual point in those two clusters and the bisector is greater than the right hand side of \eqref{eq:prox}.
}\label{fig:prox}
\end{figure}

The proximity condition has a very intuitive geometric interpretation, see also Figure~\ref{fig:prox}.
Suppose the partition of data points satisfies the proximity condition. Then each pair of clusters $\Gamma_a$ and $\Gamma_b$ can be separated by a plane through the bisector of their sample means $\bc_a$ and $\bc_b$. Moreover, the distance between every point  in those two clusters and the bisector must be greater than the right hand side of \eqref{eq:prox}. This geometric interpretation can be further illustrated by rewriting \eqref{eq:prox}: Denote by $h_{a,b} = \|\bc_a - \bc_b\|$ the distance between the two centers $\bc_a$ and $\bc_b$. Moreover, define
\begin{equation*}
\tau_{a,b} = \max\{ \max(\bu_{a,b}), \max(\bu_{b,a})\} \quad\mbox{where}\quad \bu_{a,b} = \overline{\BX}_a\bw_{a,b} \text{~for~} 1\leq a, b \leq k.
\end{equation*} 
Clearly, $\tau_{a,b}$ is the maximum signed projection distance over all the data points in the clusters $\Gamma_a$ and $\Gamma_b$.
As illustrated in Figure~\ref{fig:prox}, one can easily check that the left hand side of proximity condition~\eqref{eq:prox} is in fact equal to $\frac{1}{2}h_{a,b} - \tau_{a,b}$ which is the shortest distance between the midpoint $\frac{\bc_a + \bc_b}{2}$ and the projections of all the data points in $\Gamma_a$ and $\Gamma_b$ on the line connecting $\bc_a$ and $\bc_b$. This observation gives us the following proposition.
\begin{proposition}\label{prop:prox2}
The proximity condition \eqref{eq:prox} is equivalent to
\begin{equation} \label{eq:prox1}
h_{a,b} > 2\tau_{a,b} + \sqrt{\sum_{l=1}^k\|\overline{\BX}_l\|^2 \left( \frac{1}{n_a} + \frac{1}{n_b} \right) }, \quad  \forall a \neq b.
\end{equation}
\end{proposition}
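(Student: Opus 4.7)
The plan is to unfold the inner product on the left-hand side of \eqref{eq:prox} into quantities that match the definitions of $h_{a,b}$ and $\bu_{a,b}$, and then exploit the symmetry of the condition under swapping $a$ and $b$ to convert $\max(\bu_{a,b})$ into $\tau_{a,b}$.

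First, I would write
\[
\bx_{a,i} - \frac{\bc_a + \bc_b}{2} \;=\; (\bx_{a,i} - \bc_a) \;+\; \frac{\bc_a - \bc_b}{2},
\]
and take the inner product with $\bw_{b,a} = (\bc_a - \bc_b)/\|\bc_a - \bc_b\| = -\bw_{a,b}$. The second summand contributes exactly $\tfrac{1}{2}\|\bc_a - \bc_b\| = \tfrac{1}{2}h_{a,b}$, while the first summand contributes $\langle \bx_{a,i} - \bc_a, -\bw_{a,b}\rangle = -(\bu_{a,b})_i$, using the definition $\bu_{a,b} = \overline{\BX}_a\bw_{a,b}$ and the fact that the $i$-th row of $\overline{\BX}_a$ is $(\bx_{a,i} - \bc_a)^\top$. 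Taking the minimum over $i$ on the left turns $-\max_i(\bu_{a,b})_i$, so
\[
\min_{1\leq i\leq n_a}\left\langle \bx_{a,i} - \tfrac{\bc_a+\bc_b}{2}, \bw_{b,a}\right\rangle \;=\; \tfrac{1}{2}h_{a,b} - \max(\bu_{a,b}).
\]

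Next, I would note that Condition~\ref{cond:prox} is required to hold for \emph{every} ordered pair $a\neq b$, and the right-hand side of \eqref{eq:prox} is symmetric in $a$ and $b$ (as is $h_{a,b}$). Applying the identity above to the pair $(a,b)$ gives
\[
\tfrac{1}{2}h_{a,b} - \max(\bu_{a,b}) \;>\; \tfrac{1}{2}\sqrt{\Big(\textstyle\sum_{l}\|\overline{\BX}_l\|^2\Big)\big(\tfrac{1}{n_a}+\tfrac{1}{n_b}\big)},
\]
and applying it to the swapped pair $(b,a)$ gives the analogous bound with $\max(\bu_{b,a})$ in place of $\max(\bu_{a,b})$. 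Combining the two inequalities and using $\tau_{a,b} = \max\{\max(\bu_{a,b}),\max(\bu_{b,a})\}$, the pair of conditions is equivalent to the single inequality
\[
\tfrac{1}{2}h_{a,b} - \tau_{a,b} \;>\; \tfrac{1}{2}\sqrt{\Big(\textstyle\sum_{l}\|\overline{\BX}_l\|^2\Big)\big(\tfrac{1}{n_a}+\tfrac{1}{n_b}\big)}.
\]
Multiplying through by $2$ and rearranging yields \eqref{eq:prox1}, and all steps are reversible, establishing the equivalence.

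There is no real obstacle here; the content of the proposition is essentially bookkeeping of definitions. The only point worth flagging is making sure the sign of $\bw_{b,a}$ versus $\bw_{a,b}$ is tracked correctly and that the symmetric role of the ordered pairs $(a,b)$ and $(b,a)$ is invoked explicitly so that $\max(\bu_{a,b})$ can legitimately be replaced by $\tau_{a,b}$ in the combined statement.
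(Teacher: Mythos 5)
Your proposal is correct and follows essentially the same route as the paper: expand $\bx_{a,i}-\tfrac{\bc_a+\bc_b}{2}=(\bx_{a,i}-\bc_a)+\tfrac{\bc_a-\bc_b}{2}$, identify the two contributions as $-(\bu_{a,b})_i$ and $\tfrac12 h_{a,b}$, and then use the ordered pairs $(a,b)$ and $(b,a)$ together with the symmetry of the right-hand side to replace $\max(\bu_{a,b})$ by $\tau_{a,b}$. Your explicit sign-tracking of $\bw_{b,a}=-\bw_{a,b}$ is, if anything, slightly more careful than the paper's write-up.
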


Besides showing that the proximity condition \eqref{eq:prox} guarantees the exactness of Peng-Wei relaxation, we also obtain a necessary proximity condition. If a deterministic mixture fails to fulfill the necessary condition, exact recovery by the Peng-Wei relaxation is provably impossible. 

Awasthi and Sheffet's has raised an open question in \cite{AwasthiS12}: 
can the pairwise separation condition be fully localized, i.e., depend only on information of the corresponding pair of clusters? We apply the Amini and Levina's relaxation \cite{AminiL18}, originally intended to address the weak assortativity issue in community detection among networks, to convexify the $k$-means problem in the case of balanced clusters. Surprisingly, we end up with a completely localized proximity condition for the exactness of the convex relaxation, thus solving Awasthi and Sheffet's open problem for the balanced case.

Furthermore, beyond the scope of the Peng-Wei relaxation of $k$-means, the proximity condition itself provides an algorithm that can accept answers to the NP-hard $k$-means problem (although it is not able to reject an answer). For a given solution to $k$-means, one can simply check whether the proximity condition holds, and if it does hold, then the solution is provably the unique global minimum. The time cost is proportional to $\mathcal{O}(k N + m^2 N)$. Assuming the number of clusters $k$ and the dimension of data $m$ are fixed, the time complexity is linear in the total number of points $N$, which improves the quasilinear-time algorithm proposed in~\cite{IguchiMPV17} in terms of the time complexity.

\subsection{Comparison to existing proximity conditions in the literature}
As mentioned before, in the literature of projective $k$-means, proximity conditions have been proposed in~\cite{KumarK10} and later improved in~\cite{AwasthiS12}. In this section we compare our proximity conditions with these existing results.
  
Denote $\overline{\BW}=[\overline{\BX}_1^\top, \ldots, \overline{\BX}_k^\top]^\top$. By our notation, the original Kumar-Kannan proximity condition \cite{KumarK10} is equivalent to
\[
h_{a,b} > 2\tau_{a,b} + Ck\left(\frac{1}{\sqrt{n_a}}  + \frac{1}{\sqrt{n_b}}\right)\|\overline{\BW}\|, \quad  \forall a \neq b,
\]
for some large absolute constant $C>0$. The fact that $\max_{1\leq l\leq k}\|\overline{\BX}_l\| \leq \|\overline{\BW}\|$ implies $\sqrt{\sum_{l=1}^k\|\overline{\BX}_l\|^2} \leq \sqrt{k} \|\overline{\BW}\|$. Therefore, our proximity condition~\eqref{eq:prox1} is strictly weaker than the Kumar-Kannan condition by at least a factor of $\sqrt{k}$.

The comparison between \eqref{eq:prox} and the Awasthi-Sheffet conditions in \cite{AwasthiS12} is less straightforward. Theorem 4 therein states that consistent clustering is guaranteed by projective $k$-means plus Lloyd's algorithm as long as 
\begin{equation}
\label{eq:awasthi_min_sep}
h_{a,b} > \max\left\{2\tau_{a,b} + C\left(\frac{1}{\sqrt{n_a}}  + \frac{1}{\sqrt{n_b}}\right)\|\overline{\BW}\|,~~ C\sqrt{k} \left( \frac{1}{\sqrt{n_a}} + \frac{1}{\sqrt{n_b}}\right)\|\overline{\BW}\|\right\} \quad  \forall a \neq b.
\end{equation}

Compared to our proximity condition \eqref{eq:prox}, the second term on the right-hand side of \prettyref{eq:awasthi_min_sep} could be more stringent given the fact $\sqrt{\sum_{l=1}^k\|\overline{\BX}_l\|^2} \leq \sqrt{k} \|\overline{\BW}\|$, whereas the first term is less stringent than ours since
\[
\|\overline{\BW}\|^2 = \|\overline{\BW}^\top \overline{\BW}\| = \left\|\sum_{a=1}^k \overline{\BX}_a^\top \overline{\BX}_a\right\| \leq  \sum_{a=1}^k \left\|\overline{\BX}_a^\top \overline{\BX}_a\right\| = \sum_{a=1}^k \left\|\overline{\BX}_a\right\|^2. 
\]
Therefore, it is fair to say our proximity condition is comparable to the Awasthi-Sheffet condition.

\subsection{Implications under stochastic models}
We should emphasize that in order to prove our main results, we benefit a lot from the existing primal-dual analyses in \cite{AwasBCKVW15,IguchiMPV17}. The major difference between our analysis and~\cite{AwasBCKVW15,IguchiMPV17} is that we aim at deriving proximity conditions under deterministic models rather than establishing minimum separation results under stochastic models. 

However, we are still curious about what minimum separation conditions our proximity condition can yield when applied to both the stochastic ball model and the Gaussian mixture model. Before presenting conditions given by our proximity condition, we first review the state-of-the-art results on both models.
\paragraph{Existing work on the Peng-Wei relaxation:} 
The stochastic ball model can be viewed as a special case of mixture models where the distributions of sample data points are compactly supported on $k$ disjoint unit balls in $\RR^m$. The clusters are balanced and the covariance structure is fairly rigid since all the distributions are assumed to be identical and isotropic. 

Let $\Delta$ be the minimal separation between the cluster centers. In~\cite{AwasBCKVW15}, it is proven that the Peng-Wei relaxation achieves exact recovery provided $\Delta > 2\sqrt{2}(1 + {1}/{\sqrt{m}})$, where the lower bound of $\Delta$ is independent of the number of clusters $k$. Another bound of $\Delta$ is given in~\cite{IguchiMPV17} stating that exact recovery is guaranteed if $\Delta > 2 + {k^2}/{m}$ which is near-optimal in the $m \gg k^2$ regime.

The Gaussian mixture model (GMM) as a stochastic model is more flexible. This model is characterized by its density function which is a weighted sum of the density functions of Gaussian or subgaussian distributions. In~\cite{MixonVW17}, assuming the Gaussian distributions are identical and isotropic, Mixon, Villar and Ward prove that the Peng-Wei relaxation learns the Gaussian centers for balanced clusters when the center separations are required to be above $k \sigma$, where $\sigma \mtx{I}$ is the common covariance of all Gaussian distributions.

\paragraph{Existing work on other algorithms:} 
Clustering Gaussian mixture models has received extensive attention in machine learning and statistics communities. Besides~\cite{MixonVW17}, a lot of progress has been made in developing efficient 
algorithms for this task.
Among them are a family of algorithms here referred to as the projective $k$-means \cite{VemW04,AchM05,KanV09,KumarK10,AwasthiS12,Das99,LuZ16}. 
In general,  the projective $k$-means works in two steps: first project all the data points onto a lower dimensional space usually based on singular value decomposition (SVD), and then classify each point by heuristic methods such as single linkage clustering in~\cite{AchM05} or Lloyd's algorithm in~\cite{AwasthiS12}. 

Vempala and Wang~\cite{VemW04} show that if each pairwise center separation is larger than a quantity determined by the number of clusters $k$, the dimension $m$ and the variances of the clusters, the projective algorithm can classify a mixture of $k$ isotropic Gaussians with high probability. 
Achlioptas and McSherry~\cite{AchM05} show that SVD-based projection followed by single-linkage clustering is able to classify all the sampled data points accurately if the center separation of each pair of clusters is greater than the operator norm of the covariance matrix and the weights of the two clusters  plus a term which depends on the concentration properties of the distributions in the mixture.
The algorithm studied by Kannan and Kumar in~\cite{KumarK10}---the work that first devises the idea of \emph{proximity condition}---also begins with an SVD-based projection and proceeds by Lloyd's algorithm which is initialized by an unspecified near-optimal solution to the $k$-means problem. As stated before, its technical results are improved by Awatshi and Sheffet in~\cite{AwasthiS12}.
Recently, Lu and Zhou~\cite{LuZ16} provide a more detailed estimation of  misclassification rate for each iteration of  Lloyd's algorithm with initialization given by spectral methods~\cite{KanV09}.

\paragraph{Our results:}
We can easily apply the proximity condition to the stochastic ball model and the Gaussian mixture model. The corresponding recovery guarantees are competitive with or improve upon other state-of-the-art results. 
\begin{itemize}
\item
For the stochastic ball model, we show that $\Delta > 2+{\cal O}(\sqrt{{k}/{m}})$ is sufficient to guarantee the exact recovery of the Peng-Wei relaxation, which improves the separation condition $\Delta > 2 + {k^2}/{m}$ in~\cite{IguchiMPV17} when $k$ is large. 
Moreover, our result applies to a broader class of stochastic ball models where each cluster can have a different number of points and may even satisfy a different probability distribution as long as the support of density function is contained within a unit ball. 

\item
For the Gaussian mixture model, we summarize our result for the Peng-Wei relaxation and other state-of-the-art results for both the Peng-Wei relaxation and projective $k$-means in Table~\ref{tab:comp}. It has been shown in~\cite{MixonVW17} that the centers of a Gaussian mixture can be accurately estimated by Peng-Wei relaxation provided the minimal separation is ${\cal O}(k)$. In contrast, our proximity provides a different minimal separation condition ${\cal O}(k^{1/2}+\log^{1/2}{(kN)})$, which is smaller than ${\cal O}(k)$ if $k$ is large and $N$ not too large. Our separation condition is better than \cite{KumarK10} and comparable to \cite{AwasthiS12} for projective $k$-means. Though our bound loses a $k^{1/4}$ factor vis-\`a-vis the one in \cite{VemW04}  for the special case of spherical Gaussian mixtures, we can handle more general Gaussian mixtures where the density functions do not have to be spherical or identical. 
\end{itemize}

\begin{table}[ht!]
\caption{Comparison of results on GMM: 
the separation bound for~\cite{VemW04} only applies to mixtures of isotropic Gaussian distributions and the bound for~\cite{MixonVW17} is used to guarantee learning cluster centers instead of recovering the labels of data points. }\label{tab:comp}
\vspace{-.3cm}
\begin{center}
\begin{tabular}{|l|l|l|l|l|}
\hline
Authors & Separation bounds & Algorithms & Exact & Year \\ \hline
Vempala and Wang~\cite{VemW04} & ${\cal O}(k^{1/4} \log^{1/4}(m))$ & Projective $k$-means & Yes & 2004 \\ \hline
Achlioptas and McSherry~\cite{AchM05} & ${\cal O}(k+k^{1/2} \log^{1/2} N)$ & Projective $k$-means & Yes & 2005 \\ \hline
 Kumar and Kannan~\cite{KumarK10} & ${\cal O}(k(\mbox{polylog} (N)))$ & Projective $k$-means & Yes & 2010 \\ \hline
Awasthi and Sheffet~\cite{AwasthiS12}& ${\cal O}(k^{1/2}(\mbox{polylog} (N)))$ & Projective $k$-means & Yes & 2012\\ \hline
Lu and Zhou~\cite{LuZ16}& ${\cal O}(k^{3/2})$ & Projective $k$-means & No  &2016 \\ \hline
Mixon, Villar, and Ward~\cite{MixonVW17} & ${\cal O}(k)$ & SDP $k$-means & No & 2017 \\ \hline
Our work  & ${\cal O}(k^{1/2} + \log^{1/2}{(kN)})$ & SDP $k$-means & Yes & - \\ \hline
\end{tabular}
\end{center}
\end{table}

\subsection{Notation}

Let $\bone_{\Gamma_a}$ be the indicator vector of $\Gamma_a \subseteq \Gamma$. $\bone_n$ is an $n\times 1$ vector with all entries equal to 1. Given any two real matrices $\BU$ and $\BV$ in $\RR^{m\times n}$, we define the inner product as $\lag \BU, \BV\rag = \Tr(\BU \BV^{\top}) = \sum_{i=1}^m\sum_{j=1}^n U_{ij}V_{ij}$. For a vector $\bv$, $\max(\bv)$ is equal to the largest entry of $\bv$.
We denote $\BZ\geq 0$ if $\BZ$ is a nonnegative matrix, i.e., each entry is nonnegative; $\BZ \succeq 0$ if $\BZ$ is a symmetric positive semi-definite matrix. Besides, we also use the notation listed below throughout the paper.
\begin{longtable}[l]{l l} 
\label{table:notation}
$m$ & Dimension of data \\ 
$k$ & Number of clusters \\ 
$\Gamma$ & Set of $N$ data points in $\RR^m$ \\
$\Gamma_a$ & The $a$-th cluster \\
$N$ & Total number of data points \\ 
$n_a$ & Number of points in the $a$-th cluster \\ 
${\cal S}^N$ & Set of $N\times N$ symmetric matrices \\
${\cal S}^N_+$ & Set of $N\times N$ positive semi-definite matrices\\ 
$\RR^{N\times N}_+$ & Set of $N\times N$ nonnegative matrices\\ 
$\BW$ & Data matrix of all $N$ data points \\
$\BX_a$ & Data matrix of the $a$-th cluster \\
$\overline{\BX}_a$ & Centered data matrix of the $a$-th cluster \\
$\BD$ & Squared distance matrix\\
$\BX$ & Ground-truth solution to the SDP relaxation of $k$-means \\
$\BY^{(a,b)}$ & Submatrix of any $N \times N$ matrix $\BY$ given by $\{ y_{s,t} \}_{s \in \Gamma_a, t \in \Gamma_b}$ \\
$\bx_{a,i}$ & The $i$-th data point in the $a$-th cluster \\
$\bmu_a$ & Population mean of the $a$-th cluster in a generative model \\
$\bc_a$ & Sample mean of the $a$-th cluster \\
$\bw_{a,b}$ & Unit vector pointing from $\bc_a$ to $\bc_b$ \\
$\bu_{a,b}$ & Signed projection distance given by $\bu_{a,b} = \overline{\BX}_a \bw_{a,b}$ \\
$h_{a,b}$ & Distance between $\bc_a$ and $\bc_b$ \\
$\tau_{a,b}$ & Maximum signed projection distance determined by $\bu_{a,b}$ and $\bu_{b,a}$ \\
\end{longtable}


\section{$k$-means and the Peng-Wei relaxation} 
\label{sec:prelim}
In this section, we briefly review the formulation of $k$-means and its SDP relaxation introduced by Peng and Wei~\cite{PengW07}. Let $\Gamma=\{\bx_l\}_{l=1}^N$ be a set of $N$ data points in $\RR^m$. $k$-means attempts to divide $\Gamma$ into $k$ disjoint clusters by seeking a solution to the following minimization problem:
\begin{equation*}
\min_{\{\Gamma_a\}_{a=1}^k} \min_{\{\bgamma_a\}_{a =1}^k}~\sum_{a=1}^k \sum_{l\in \Gamma_a} \left\| \bx_l - \bgamma_a  \right\|^2,
\end{equation*}
where $\{\Gamma_a\}_{a=1}^k$ form a partition of $\Gamma$ (i.e., $\sqcup_{a=1}^k\Gamma_a=\Gamma$ and $\Gamma_a\sqcap\Gamma_b=\emptyset$ if $a\neq b$). For any given partition $\{\Gamma_a\}_{a=1}^k$, choosing $\bgamma_a$ as the centroid $\bgamma_a = \bc_a = \frac{1}{|\Gamma_a|} \sum_{j\in\Gamma_a} \bx_j~ (a = 1, \ldots, k)$ minimizes the objective function. Therefore, the $k$-means problem is equivalent to:
\begin{equation}\label{eq:km}
\min_{\{\Gamma_a\}_{a=1}^k} ~\sum_{a=1}^k \sum_{l\in \Gamma_a} \left\| \bx_l - \bc_a  \right\|^2,
\end{equation}

Given an arbitrary partition $\{\Gamma_a\}_{a=1}^k$ of $\Gamma$, let $\bone_{\Gamma_a}~ (a = 1, \ldots, k)$ be the indicator function of the $a$-th cluster. That is, 
\begin{align*}
\bone_{\Gamma_a}(l) = \begin{cases}
1 &\mbox{if }l\in\Gamma_a,\\
0 & \mbox{otherwise}.
\end{cases}
\end{align*}
A simple calculation  can reveal that 
\begin{equation*}
  \frac{1}{|\Gamma_a|} \sum_{l\in\Gamma_a, s\in\Gamma_a}  \|\bx_l - \bx_s\|^2 = 2\sum_{l\in\Gamma_a}  \|\bx_l - \bgamma_a\|^2
\end{equation*}
and hence,
\begin{align*}
\sum_{a=1}^k \sum_{l\in\Gamma_a} \left\| \bx_l - \bmu_a\right\|^2  &= \frac{1}{2} \sum_{a=1}^k \frac{1}{|\Gamma_a|} \sum_{l\in\Gamma_a, s\in\Gamma_a} \|\bx_l - \bx_s\|^2\\
 &=\frac{1}{2} \sum_{a=1}^k\frac{1}{|\Gamma_a|}\lag \bone_{\Gamma_a}\bone_{\Gamma_a}^\top, \BD\rag,
\end{align*}
where $\BD\in\RR^{N\times N}$ is the distance matrix with the $(l,s)$-th entry being given by $\BD_{l,s}=\|\bx_l-\bx_s\|^2$. Therefore, we can rewrite the $k$-means problem as 
\begin{align}\label{eq:km2}
\begin{split}
\min \quad &  \lag \BZ, \BD\rag \\
\mbox{s.t.} \quad &  \BZ =\sum_{a=1}^k\frac{1}{|\Gamma_a|}\bone_{\Gamma_a}\bone_{\Gamma_a}^\top \mbox{ with }\sqcup_{a=1}^k\Gamma_a=\Gamma\mbox{ and }\Gamma_a\sqcap\Gamma_b=\emptyset \mbox{ for }a\neq b.
\end{split}
\end{align}
It is self-evident that \eqref{eq:km2} is a non-convex problem due to the combinatorial nature of the feasible set. Indeed, \eqref{eq:km2} is an NP-hard problem~\cite{AloiseDPP09}. Despite this, it can be easily verified that $\BZ=\sum_{a=1}^k\frac{1}{|\Gamma_a|}\bone_{\Gamma_a}\bone_{\Gamma_a}^\top$ satisfies the following four properties: 
\begin{equation*}
\BZ \succeq 0, \quad \BZ \geq 0,  \quad \BZ \bone_N = \bone_N,\quad \Tr(\BZ) = k.
\end{equation*} 
Replacing the constraint in \eqref{eq:km2} by the above four properties leads to the SDP relaxation of $k$-means introduced by Peng and Wei in~\cite{PengW07},
\begin{align}\label{eq:sdp}
\begin{split}
\min \quad &  \lag \BZ, \BD\rag \\
\mbox{s.t.} \quad &  \BZ \succeq 0, \quad \BZ \geq 0,  \quad \BZ \bone_N = \bone_N,\quad \Tr(\BZ) = k,
\end{split}
\end{align}
which will be the focus of this paper.

The Peng-Wei relaxation is a convex problem and can be solved in polynomial time using the interior-point method~\cite{Wright1997}. We denote by $\BX$ the optimal solution to the Peng-Wei relaxation. Clearly, every feasible point of \eqref{eq:km2} is also feasible for \eqref{eq:sdp}; so once the optimal solution to \eqref{eq:sdp} has the form $\BX=\sum_{a=1}^k\frac{1}{|\Gamma_a|}\bone_{\Gamma_a}\bone_{\Gamma_a}^\top$, it must be an optimal solution to the $k$-means problem. Therefore, the question of central importance is:
\begin{quote}
\centering\large\em
When is the solution to \eqref{eq:sdp} of the form $\BX=\sum_{a=1}^k\frac{1}{|\Gamma_a|}\bone_{\Gamma_a}\bone_{\Gamma_a}^\top$?
\end{quote}


\section{Exact recovery guarantees} \label{sec:main}

\subsection{Exact clustering and proximity conditions}
\label{sec:proximity_and_determ}
In a nutshell our following main theorem states that the proximity condition \eqref{eq:prox} implies the exactness of the Peng-Wei relaxation \eqref{eq:sdp}:
\begin{theorem}[\bf Main theorem]\label{thm:main}
Suppose the partition $\{\Gamma_a\}_{a=1}^k$ obeys the proximity condition \eqref{eq:prox}.  Then the  minimizer of the Peng-Wei relaxation~\eqref{eq:sdp} is unique and given by 
$
\BX=\sum_{a=1}^k\frac{1}{|\Gamma_a|}\bone_{\Gamma_a}\bone_{\Gamma_a}^\top.
$ 
\end{theorem}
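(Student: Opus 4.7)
The plan is to certify optimality of $\BX = \sum_{a=1}^k \frac{1}{n_a}\bone_{\Gamma_a}\bone_{\Gamma_a}^\top$ for the SDP \eqref{eq:sdp} by a KKT/dual certificate argument. Slater's condition is easily verified for \eqref{eq:sdp}, so strong duality applies and it suffices to exhibit Lagrange multipliers $\balpha \in \RR^N$, $z \in \RR$, $\BB \in \RR^{N\times N}_+$ and $\BQ \in \mathcal{S}^N_+$ satisfying the stationarity relation
\[
\BQ \;=\; \BD \;-\; \tfrac{1}{2}\bigl(\balpha \bone_N^\top + \bone_N \balpha^\top\bigr) \;-\; z\,\BI_N \;-\; \BB
\]
together with the complementary slackness conditions $\lag \BX,\BB\rag = 0$ and $\BX\BQ = 0$. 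The standard weak-duality identity $\lag \BZ - \BX, \BD\rag = \lag \BZ,\BQ\rag + \lag \BZ,\BB\rag \geq 0$ for every primal-feasible $\BZ$ will then certify optimality, and a rank/support argument will yield uniqueness.

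The construction I would use exploits the block structure of $\BX$. Since $\BX$ is supported on the within-cluster blocks, $\lag \BX,\BB\rag = 0$ forces $\BB^{(a,a)} = \bzero$ for every $a$, and $\BX\BQ = 0$ forces $\BQ\bone_{\Gamma_a} = \bzero$ for every $a$. My ansatz is to choose $\BQ$ itself block-diagonal, which decouples the PSD verification across clusters and pins down every off-diagonal entry of $\BB$ via stationarity. Inside cluster $a$, using $\BD_{ls}^{(a,a)} = \|\bx_{a,l} - \bx_{a,s}\|^2$ and the kernel constraint $\BQ^{(a,a)}\bone_{n_a}=\bzero$, one solves explicitly for $\balpha$ restricted to $\Gamma_a$ (up to a cluster-wise additive constant) and reduces $\BQ^{(a,a)}$, after absorbing rank-one shifts, to $z\,\BI - \tfrac{2}{n_a}\overline{\BX}_a \overline{\BX}_a^\top$ on the orthogonal complement of $\bone_{n_a}$. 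The remaining freedom — the cluster-wise constants in $\balpha$ and the scalar $z$ — will be tuned to simultaneously guarantee $\BQ^{(a,a)}\succeq 0$ for every $a$ and $\BB^{(a,b)}\geq 0$ for every pair $a\neq b$.

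The PSD requirement on each $\BQ^{(a,a)}$ reduces to $z \geq (2/n_a)\|\overline{\BX}_a\|^2$ (or a comparable uniform bound), and since a single $z$ must serve every cluster, its natural size involves $\sum_l \|\overline{\BX}_l\|^2$ scaled by the cluster sizes; this is precisely the origin of the global within-cluster-variance term on the right-hand side of \eqref{eq:prox}. Once $z$ and $\balpha$ are fixed, a direct computation of
\[
\BB^{(a,b)}_{ls} \;=\; \BD^{(a,b)}_{ls} - \tfrac{1}{2}\bigl(\alpha_{a,l}+\alpha_{b,s}\bigr),
\]
using $\BD^{(a,b)}_{ls} = \|\bx_{a,l} - \bx_{b,s}\|^2$ and expanding around the midpoint $\tfrac{\bc_a+\bc_b}{2}$, expresses $\BB^{(a,b)}_{ls}$ as a positive multiple of
\[
\Bigl\lag \bx_{a,l}-\tfrac{\bc_a+\bc_b}{2},\, \bw_{b,a}\Bigr\rag + \Bigl\lag \bx_{b,s}-\tfrac{\bc_a+\bc_b}{2},\, \bw_{a,b}\Bigr\rag
\]
minus an error of the form $\sqrt{(\sum_l\|\overline{\BX}_l\|^2)(1/n_a+1/n_b)}$. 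Requiring $\BB^{(a,b)}_{ls}\geq 0$ for every $(l,s)$ and every $a\neq b$ is then exactly the proximity condition \eqref{eq:prox}, with the minimum over $i$ absorbing the worst point in each cluster.

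The main obstacle, I expect, is getting the \emph{sharp} constants in \eqref{eq:prox} rather than a weaker bound that loses a factor of $\sqrt{k}$ (as in Kumar-Kannan): this requires carefully balancing the cluster-wise constants in $\balpha$ against $z$ and exploiting the tight bound $\overline{\BX}_a^\top \overline{\BX}_a \preceq \|\overline{\BX}_a\|^2 \BI$ inside each cluster rather than the global bound via $\|\overline{\BW}\|$. For uniqueness, the strict inequality in \eqref{eq:prox} produces $\BB^{(a,b)}_{ls} > 0$ on every off-diagonal entry, so any alternative optimum $\BZ$ must vanish on all off-diagonal blocks by $\lag \BZ,\BB\rag = 0$; coupled with $\ker(\BQ) = \mathrm{span}\{\bone_{\Gamma_a}\}_{a=1}^k$ (which follows from the strict spectral gap in each $\BQ^{(a,a)}$), the primal feasibility $\BZ\bone_N=\bone_N$ and $\Tr(\BZ)=k$ then force $\BZ=\BX$.
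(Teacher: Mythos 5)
Your overall framework (conic duality, complementary slackness forcing $\BB^{(a,a)}=\bzero$ and $\BQ\bone_{\Gamma_a}=\bzero$, explicit solution for $\balpha$, uniqueness via strict positivity of the off-diagonal blocks of $\BB$ plus a rank argument) matches the paper's Section 6. But your central ansatz --- taking $\BQ$ block-diagonal so that stationarity pins down every entry of $\BB^{(a,b)}$ --- has a genuine gap, and it is precisely the point where the paper does something different. If $\BQ^{(a,b)}=\bzero$ for $a\neq b$, then substituting the explicit $\balpha$ (which works out to $\alpha_{a,l}=-2\|\bx_{a,l}-\bc_a\|^2-z/n_a$ up to the paper's sign convention) gives
\begin{equation*}
\BB^{(a,b)}_{ls}=h_{a,b}^2-2h_{a,b}\bigl((\bu_{a,b})_l+(\bu_{b,a})_s\bigr)-2\bigl\langle \bx_{a,l}-\bc_a,\;\bx_{b,s}-\bc_b\bigr\rangle-\tfrac{z}{2}\bigl(\tfrac{1}{n_a}+\tfrac{1}{n_b}\bigr).
\end{equation*}
The third term is the \emph{full} inner product of the deviation vectors, not merely their projections onto $\bw_{a,b}$, and the proximity condition \eqref{eq:prox} says nothing about it. Concretely: if each cluster contains a point at distance $R$ from its center in a direction orthogonal to $\bc_b-\bc_a$, with those two deviations aligned, then $\tau_{a,b}\approx 0$ and the right-hand side of \eqref{eq:prox} scales like $R/\sqrt{n}$, so the proximity condition is satisfied for $h_{a,b}$ of constant order once $n$ is large; yet your $\BB^{(a,b)}_{ls}$ requires $h_{a,b}^2>2R^2+\cdots$, which fails. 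So entrywise nonnegativity of the $\BB$ forced by a block-diagonal $\BQ$ does \emph{not} follow from \eqref{eq:prox}.

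The paper's certificate avoids this by letting $\BQ$ have nonzero off-diagonal blocks: it splits each block into its $T$ and $T^{\perp}$ components, notes that $\BM_{T^\perp}^{(a,b)}=-2\overline{\BX}_a\overline{\BX}_b^\top$ (exactly your troublesome cross term), and instead of cancelling it sets $\BB_{T^\perp}^{(a,b)}=4\bu_{a,b}\bu_{b,a}^\top$, a rank-one matrix built only from the signed projections. Then $\BB^{(a,b)}_{ls}$ becomes a bilinear function of $(\bu_{a,b})_l$ and $(\bu_{b,a})_s$ alone, whose minimum over the range $[-\tau_{a,b},\infty)^2$ is attained at the corner and is positive exactly under \eqref{eq:prox1}. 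The price is that $\BQ_{T^\perp}$ is no longer block-diagonal, so positive semidefiniteness must be established globally via the operator-norm bound $\|\BM_{T^\perp}-\BB_{T^\perp}\|\le 2\sum_{l}\|\overline{\BX}_l\|^2$; this bound, not the per-cluster PSD constraint you describe, is the true origin of the global term $\sum_l\|\overline{\BX}_l\|^2$ in \eqref{eq:prox}. (A smaller issue: your per-cluster reduction should read $z(\I_{n_a}-\tfrac{1}{n_a}\BJ_{n_a\times n_a})-2\overline{\BX}_a\overline{\BX}_a^\top\succeq\bzero$, i.e., $z\ge 2\|\overline{\BX}_a\|^2$ without the extra $1/n_a$ factor you wrote.)
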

Since the global minimum of~\eqref{eq:sdp} is always smaller than that of~\eqref{eq:km}, Theorem~\ref{thm:main} implies that the proximity condition provides a simple algorithm that is able to accept answers to the $k$-means problem.
\begin{corollary} [\bf Algorithm accepting answers to $k$-means]
If a partition $\Gamma=\sqcup_{a=1}^k\Gamma_a$ satisfies the proximity condition \eqref{cond:prox}, then it is the unique global minimum to the $k$-means objective function.
\end{corollary}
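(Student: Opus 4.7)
The plan is to deduce the corollary as a direct consequence of Theorem~\ref{thm:main}, exploiting the fact that the Peng-Wei program \eqref{eq:sdp} is a relaxation of \eqref{eq:km2}. The whole argument amounts to three short observations; no additional estimation is needed.

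First, I would set up the inclusion of feasible sets. Every partition matrix $\BZ_{\{\Gamma'_a\}} = \sum_{a=1}^k \frac{1}{|\Gamma'_a|}\bone_{\Gamma'_a}\bone_{\Gamma'_a}^\top$ arising from a partition $\Gamma=\sqcup_{a=1}^k \Gamma'_a$ is easily checked to be positive semidefinite, nonnegative, to satisfy $\BZ_{\{\Gamma'_a\}}\bone_N = \bone_N$, and to have trace $k$; that is, the feasible set of \eqref{eq:km2} is contained in the feasible set of \eqref{eq:sdp}. Consequently the optimal value of \eqref{eq:sdp} is at most the optimal value of the $k$-means program \eqref{eq:km2}.

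Second, applying Theorem~\ref{thm:main} under the assumed proximity condition, the unique minimizer of \eqref{eq:sdp} is $\BX=\sum_{a=1}^k\frac{1}{|\Gamma_a|}\bone_{\Gamma_a}\bone_{\Gamma_a}^\top$, which happens to be a feasible point for the combinatorial problem \eqref{eq:km2}. Its objective value therefore furnishes both an upper bound on the $k$-means minimum (since it is achieved by a valid partition) and the value of the SDP minimum (since it is the SDP minimizer). Combined with the inequality of the previous paragraph these two bounds pinch together, so $\langle \BX,\BD\rangle$ equals the $k$-means minimum and $\{\Gamma_a\}$ is a global minimizer of \eqref{eq:km}.

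Third, to upgrade this to \emph{uniqueness}, I would use that the map sending a partition $\{\Gamma'_a\}_{a=1}^k$ to its matrix $\BZ_{\{\Gamma'_a\}}$ is injective (modulo relabeling of clusters), because the block-diagonal sparsity pattern of $\BZ_{\{\Gamma'_a\}}$ recovers the partition. Thus any competing partition $\{\Gamma'_a\}\neq\{\Gamma_a\}$ produces a feasible point $\BZ_{\{\Gamma'_a\}}\neq \BX$ of \eqref{eq:sdp}; by the \emph{uniqueness} clause of Theorem~\ref{thm:main} its objective value must be strictly larger than $\langle\BX,\BD\rangle$, and hence strictly larger than the $k$-means minimum attained at $\{\Gamma_a\}$.

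There is essentially no obstacle; all of the analytical content has been absorbed into Theorem~\ref{thm:main}. The only point requiring minor care is the injectivity used in the uniqueness step, but this is immediate from the structure of the indicator outer products $\bone_{\Gamma_a}\bone_{\Gamma_a}^\top$.
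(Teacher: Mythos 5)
Your proposal is correct and follows exactly the paper's (largely implicit) argument: the Peng--Wei program is a relaxation of the combinatorial problem, so the uniqueness clause of Theorem~\ref{thm:main} combined with the injectivity of the partition-to-matrix map immediately yields that the given partition is the unique $k$-means minimizer. The paper compresses this into one sentence after the theorem statement, but your three observations are precisely the content it relies on.
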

Note that each data point $\bx_{a,i}$ appears $k-1$ times on the left hand side of \eqref{cond:prox}, and it takes ${\cal O}(m^2 n_a)$ amount of time to compute each matrix operator norm using the Golub-Reisch SVD algorithm \cite{GolubV96}. Thus, the time cost to examine the proximity condition is proportional to ${\cal O}(k N + m^2 N)$.

To the best of our knowledge, $k$-means problem has not been shown in NP or not. The proximity condition does not change this fact. We want to emphasize that the polynomial time examination of the proximity condition \eqref{cond:prox} does not imply that an answer to the $k$-means problem can be verified in polynomial time since it does not accept all correct answers. A different approach that leverages the dual certificate associated with the Peng-Wei relaxation to test under certain conditions  the optimality of a candidate $k$-means solution can be found in~\cite{IguchiMPV17}. The algorithm proposed in~\cite{IguchiMPV17} tests the optimality of a candidate solution in quasilinear time. Hence, our method improves the time complexity by a logarithmic factor.

While the main theorem provides a sufficient condition for the Peng-Wei relaxation to exactly recover a given partition, the following theorem gives a necessary condition.

\begin{theorem}[\bf Necessary condition] \label{thm:lower}
Suppose  $\BX=\sum_{a=1}^k\frac{1}{|\Gamma_a|}\bone_{\Gamma_a}\bone_{\Gamma_a}^\top$ is a global minimum of \eqref{eq:sdp}. 
Then the partition $\{\Gamma_a\}_{a=1}^k$ must satisfy
\begin{equation} \label{eq:necessary}
h_{a,b} \geq \tau_{a,b} + \sqrt{\tau_{a,b}^2 + \max_t \| \overline{\BX}_t \|^2 \left( \frac{1}{n_a} + \frac{1}{n_b} \right) }, \quad \forall a \neq b.
\end{equation}
\end{theorem}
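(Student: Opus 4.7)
I plan to prove the necessary condition by contradiction via first-order optimality. Suppose $\BX = \sum_a \frac{1}{n_a}\bone_{\Gamma_a}\bone_{\Gamma_a}^\top$ is a global minimizer of \eqref{eq:sdp}; then for every symmetric $\BV$ such that $\BX + \epsilon\BV$ remains feasible for all sufficiently small $\epsilon>0$, we must have $\langle\BV,\BD\rangle\geq 0$. Fixing a pair $(a,b)$, I will exhibit a specific family of such $\BV$'s whose optimality inequality reduces, after an optimization step, exactly to \eqref{eq:necessary}. Since $\BX$ has rank $k$ with $\mathrm{range}(\BX) = \mathrm{span}\{\bone_{\Gamma_c}\}$, $\BX^{(c,d)} = 0$ for $c \neq d$, and $\BX^{(c,c)} > 0$ entrywise, a tangentially feasible direction $\BV$ must obey $\BV\bone = \bzero$, $\Tr\BV = 0$, $P_\perp\BV P_\perp \succeq 0$ with $P_\perp = \BI - \sum_c \frac{1}{n_c}\bone_{\Gamma_c}\bone_{\Gamma_c}^\top$ the projection onto $\ker(\BX)$, and $\BV^{(c,d)} \geq 0$ entrywise for every $c \neq d$.

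My ansatz is the rank-$2$ update $\BV = \bq\bq^\top - \br\br^\top$ with $\bq \in \ker(\BX)$ (which automatically ensures the PSD tangent condition since $P_\perp \bq = \bq$ while $P_\perp \br = \bzero$) and $\br = r_a\bone_{\Gamma_a} + r_b\bone_{\Gamma_b}$, where $r_a n_a + r_b n_b = 0$ and $r_a r_b < 0$. With this choice, $-\br\br^\top$ contributes an entrywise nonnegative matrix on the $(a,b)$ block and vanishes on every other off-diagonal block, reducing the remaining nonnegativity burden to $\bq\bq^\top$. Using the identity $\bv^\top\BD\bv = -2\|\sum_i v_i\bx_i\|^2$ for any $\bv$ with $\bv^\top\bone = 0$, together with $\sum_i r_i\bx_i = r_a n_a(\bc_a - \bc_b)$ and the budget $\|\bq\|^2 = \|\br\|^2 = (r_a n_a)^2(\frac{1}{n_a} + \frac{1}{n_b})$ coming from $\Tr\BV = 0$, one obtains
\[
\langle\BV,\BD\rangle \;=\; 2(r_a n_a)^2\, h_{a,b}^2 \;-\; 2\,\Bigl\|\sum_i q_i \bx_i\Bigr\|^2,
\]
so that optimality forces $(r_a n_a)^2 h_{a,b}^2 \geq \|\sum_i q_i \bx_i\|^2$ for every admissible $\bq$.

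The main technical step is to maximize $\|\sum_i q_i\bx_i\|^2$ over $\bq \in \ker(\BX)$ subject to the remaining entrywise nonnegativity constraint $q_i q_j \geq r_i r_j$ on each off-diagonal block. A support analysis, driven by the constraint $q_iq_j \geq 0$ on the $(a,c)$ and $(b,c)$ blocks for $c\neq a,b$, yields two complementary families of admissible $\bq$: (a) $\bq$ supported entirely on $\Gamma_{t^*}$ for $t^* = \arg\max_t\|\overline{\BX}_t\|$, chosen as a top left singular vector of $\overline{\BX}_{t^*}$, giving the inequality $h_{a,b}^2 \geq \max_t \|\overline{\BX}_t\|^2(\frac{1}{n_a}+\frac{1}{n_b})$; and (b) $\bq$ supported on $\Gamma_a \cup \Gamma_b$ and concentrated on the points realizing $\tau_{a,b}$, for which $\overline{\BX}_a^\top\bq_a + \overline{\BX}_b^\top\bq_b$ picks up a component of magnitude proportional to $\tau_{a,b}$ along $\bw_{a,b}$, subject to the coupling constraint $\bq_a(i)\bq_b(j) \geq r_a r_b$. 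Interpolating between the two families via a single parameter and optimizing produces the saturated inequality
\[
h_{a,b}\bigl(h_{a,b} - 2\tau_{a,b}\bigr)\;\geq\; \max_t \|\overline{\BX}_t\|^2\Bigl(\frac{1}{n_a} + \frac{1}{n_b}\Bigr),
\]
which is equivalent to \eqref{eq:necessary} by completing the square in $h_{a,b} - \tau_{a,b}$. The hardest part will be the construction in (b): the coupling constraint $\bq_a(i)\bq_b(j) \geq r_a r_b$, together with the zero-cluster-mean requirement on each $\bq_c$ and the need to align $\overline{\BX}_c^\top \bq_c$ with $\bw_{a,b}$, requires a careful parameter optimization to reproduce the precise quadratic form in \eqref{eq:necessary}.
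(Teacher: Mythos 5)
Your overall strategy (primal feasible directions at $\BX$ rather than the paper's dual certificate) is legitimate in principle, and your family (a) is correct: taking $\bq$ supported on $\Gamma_{t^*}$ equal to a top left singular vector of $\overline{\BX}_{t^*}$ does give the valid inequality $h_{a,b}^2 \geq \max_t\|\overline{\BX}_t\|^2(\tfrac{1}{n_a}+\tfrac{1}{n_b})$, which is exactly \eqref{eq:necessary} in the special case $\tau_{a,b}=0$. The gap is in producing the cross term $-2h_{a,b}\tau_{a,b}$. With your ansatz $\BV=\bq\bq^\top-\br\br^\top$ and $\bq\in\ker(\BX)$, $\br\in\mathrm{range}(\BX)$, the two pieces are orthogonal and your own displayed identity
\[
\langle\BV,\BD\rangle = 2(r_a n_a)^2 h_{a,b}^2 - 2\Bigl\|\sum\nolimits_i q_i\bx_i\Bigr\|^2
\]
contains no $\bq$--$\br$ interaction. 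Consequently every inequality extractable from this family has the form $h_{a,b}^2 \geq f(\bq)$ for some $h$-independent quantity $f(\bq)$; no choice of $\bq$, and no interpolation between your families (a) and (b), can manufacture a term that is \emph{linear} in $h_{a,b}$. Your family (b) in fact only yields bounds like $h_{a,b}^2 \gtrsim \tau_{a,b}^2(\tfrac{1}{n_a}+\tfrac{1}{n_b})$, which is much weaker than, and of a different shape from, the target $h_{a,b}(h_{a,b}-2\tau_{a,b}) \geq \max_t\|\overline{\BX}_t\|^2(\tfrac{1}{n_a}+\tfrac{1}{n_b})$. To rescue the approach you would need a direction with a nontrivial range--kernel cross block, e.g.\ $\BV=(\bq+\br)(\bq+\br)^\top - c\,\br\br^\top$ with $c=1+\|\bq\|^2/\|\br\|^2$; then $\langle\BV,\BD\rangle$ acquires the term $4r_an_a h_{a,b}\,\langle\sum_i q_i\bx_i,\bw_{a,b}\rangle$, which is where $-2h_{a,b}\tau_{a,b}$ can come from, at the cost of re-verifying the entrywise nonnegativity of the off-diagonal blocks (which now mix $\bq$ and $\br$).

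For comparison, the paper argues entirely on the dual side: optimality forces a certificate $(z,\BB)$ with $\BQ=z(\I_N-\BE)+\BM-\BB\succeq\bzero$, $\BB\geq\bzero$, $\BB^{(a,a)}=\bzero$; the block $\BQ^{(t,t)}\succeq\bzero$ gives $z\geq 2\max_t\|\overline{\BX}_t\|^2$ (this is the dual shadow of your family (a)), and the row-sum identity $h_{a,b}^2\bone_{n_a}-2h_{a,b}\bu_{a,b}=\frac{z(n_a+n_b)}{2n_an_b}\bone_{n_a}+\frac{1}{n_b}\BB^{(a,b)}\bone_{n_b}$ together with $\BB^{(a,b)}\geq\bzero$, evaluated at the entry achieving $\tau_{a,b}$, couples the two pieces and delivers $h_{a,b}^2-2h_{a,b}\tau_{a,b}\geq\frac{z(n_a+n_b)}{2n_an_b}$ in one stroke. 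That coupling identity is precisely the mechanism your rank-two, cross-term-free ansatz cannot reproduce.
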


Notice that as long as $\BX$ is a solution to \eqref{eq:sdp}, $\{\Gamma_a\}_{a=1}^k$ must be a global minimum to the $k$-means. In other words, it is harder for a deterministic mixture to be exactly recovered by the Peng-Wei relaxation than being the global minimum to the $k$-means. It remains unclear whether this necessary condition (Theorem~\ref{thm:lower}) is only necessary for the Peng-Wei relaxation or is necessary for the $k$-means itself as well.

\subsection{Balanced case: Amini-Levina relaxation and proximity condition}
\label{sec:balanced}
One special case of interest is the balanced case where each cluster has the same number of points, i.e. $|\Gamma_1| = \ldots = |\Gamma_k| = n$. We have seen in Section~\ref{sec:prelim} that the $k$-means problem can be rewritten as \eqref{eq:km2}:
\begin{align}
\begin{split}
\min \quad &  \lag \BZ, \BD\rag \\
\mbox{s.t.} \quad &  \BZ =\sum_{a=1}^k\frac{1}{|\Gamma_a|}\bone_{\Gamma_a}\bone_{\Gamma_a}^\top \mbox{ with }\sqcup_{a=1}^k\Gamma_a=\Gamma\mbox{ and }\Gamma_a\sqcap\Gamma_b=\emptyset \mbox{ for }a\neq b.
\end{split}
\end{align}
With the balanced assumption, i.e., the cardinalities of all clusters being the same, it is easy to verify that $\BZ = \sum_{a=1}^k \frac{1}{n} \bone_{\Gamma_a}\bone_{\Gamma_a}^\top$ obeys the following four constraints:
\begin{equation*}
\BZ \succeq 0, \quad \BZ \geq 0,  \quad \BZ \bone_N = \bone_N,\quad \diag(\BZ) = \frac{1}{n} \bone_N.
\end{equation*} 
This leads to the Amini-Levina relaxation of $k$-means, which was first introduced in \cite{AminiL18} for community detection under balanced case in order to address the weak assortativity issue:
\begin{align}\label{eq:sdp-bal} 
\begin{split}
\min \quad &  \lag \BZ, \BD\rag \\
\mbox{s.t.} \quad &  \BZ \succeq 0, \quad \BZ \geq 0,  \quad \BZ \bone_N = \bone_N,\quad \diag(\BZ) = \frac{1}{n} \bone_N.
\end{split}
\end{align}
As with the analyses on the Peng-Wei relaxation, once the optimal solution to \eqref{eq:sdp-bal} takes the form $\BX = \sum_{a=1}^k \frac1 n \bone_{\Gamma_a}\bone_{\Gamma_a}^\top$, the Amini-Levina relaxation gives an optimal solution to the $k$-means problem with balanced assumption. Once again, we ask the same question for Peng and Wei's relaxation:
{\em 
When is the solution to \eqref{eq:sdp-bal} of the form $\BX=\sum_{a=1}^k\frac{1}{n}\bone_{\Gamma_a}\bone_{\Gamma_a}^\top$?}

Unsurprisingly, the answer is another proximity condition specially tailored for Amini and Levina's relaxation.
\begin{condition}[\bf Proximity condition for balanced clusters]\label{cond:prox-bal}
A partition $\Gamma=\sqcup_{a=1}^k\Gamma_a$ with $|\Gamma_1| = \ldots = |\Gamma_k| = n$ satisfies the proximity condition for balanced clusters if for any $a\neq b$, there holds
\begin{equation}\label{eq:prox-bal}
\min_{1\leq i\leq n_a} \left\lag \bx_{a,i} - \frac{\bc_a + \bc_b}{2}, \bw_{b,a}\right\rag > 
\sqrt{\frac{k}{4n}\left(\|\overline{\mtx{X}}_a\|^2 + \|\overline{\mtx{X}}_b\|^2\right)}.
\end{equation}
\end{condition}
Similar to the general case, the proximity condition for balanced clusters also has an equivalent formulation:
\begin{equation} \label{eq:sdp-bal2}
h_{a,b} > 2\tau_{a,b} + \sqrt{\frac{k}{n}\left(\|\overline{\mtx{X}}_a\|^2 + \|\overline{\mtx{X}}_b\|^2\right)}.
\end{equation}

\begin{theorem}[\bf Exact recovery for balanced clusters] \label{thm:main-bal}
Suppose the partition $\{\Gamma_a\}_{a=1}^k$ with $|\Gamma_1| = \ldots = |\Gamma_k| = n$ obeys the proximity condition for balanced clusters \eqref{eq:prox-bal}.  Then the  minimizer of the Amini-Levina relaxation~\eqref{eq:sdp-bal} is unique and given by 
$
\BX=\sum_{a=1}^k\frac{1}{n}\bone_{\Gamma_a}\bone_{\Gamma_a}^\top.
$ Therefore, the partition $\{\Gamma_a\}_{a=1}^k$ can be recovered exactly by the Amini-Levina relaxation.
\end{theorem}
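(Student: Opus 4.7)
The plan is to establish exact recovery by constructing an explicit primal-dual certificate for the Amini-Levina SDP \eqref{eq:sdp-bal}, adapting the framework developed in \cite{AwasBCKVW15,IguchiMPV17} for the Peng-Wei relaxation to the tighter diagonal constraint $\diag(\BZ) = \tfrac{1}{n}\bone_N$. First I would form the Lagrangian of \eqref{eq:sdp-bal} with multipliers $\BQ \succeq 0$, $\BB \geq 0$, $\balpha \in \RR^N$ for the row-sum constraint, and, crucially, a \emph{vector} $\bbeta \in \RR^N$ for the diagonal constraint (in place of the single scalar that dualizes $\Tr(\BZ)=k$ in \eqref{eq:sdp}). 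Stationarity forces
\[
\BQ = \BD - \BB - \tfrac{1}{2}\bigl(\balpha\bone_N^\top + \bone_N\balpha^\top\bigr) - \diag(\bbeta),
\]
while complementary slackness with the candidate $\BX = \sum_a \tfrac{1}{n}\bone_{\Gamma_a}\bone_{\Gamma_a}^\top$ requires $\BQ\bone_{\Gamma_a} = \bzero$ and $\BB^{(a,a)} = \bzero$ for every $a$. Exact recovery with uniqueness then follows once we exhibit such a certificate with the strict inequalities $\BB^{(a,b)} > \bzero$ entrywise for $a \neq b$ and $\Null(\BQ) = \mathrm{span}\{\bone_{\Gamma_a} : 1 \leq a \leq k\}$.

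Next I would propose an explicit ansatz: take $\balpha$ block-constant, $\balpha = \sum_a \alpha_a \bone_{\Gamma_a}$, and define the off-diagonal block $\BB^{(a,b)}$ entrywise from the pairwise signed projection distances $\lag \bx_{a,i} - \tfrac{\bc_a+\bc_b}{2}, \bw_{b,a}\rag$ and $\lag \bx_{b,j} - \tfrac{\bc_a+\bc_b}{2}, \bw_{a,b}\rag$, in the spirit of \cite{IguchiMPV17}. The diagonal-block part of the constraint $\BQ\bone_{\Gamma_a} = \bzero$ then pins down $\bbeta$, while the off-diagonal part fixes the row sums of each $\BB^{(a,b)}$ and the scalars $\{\alpha_a\}$. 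The source of the improved, fully pairwise condition \eqref{eq:prox-bal} is precisely that $\bbeta$ carries one free parameter per data point: this extra diagonal freedom absorbs the cross-cluster heterogeneity that, in the Peng-Wei analysis, had to be paid for by the global quantity $\sum_l \|\overline{\BX}_l\|^2$ appearing on the right-hand side of \eqref{eq:prox}.

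With the certificate assembled, two checks remain: entrywise nonnegativity $\BB^{(a,b)} \geq \bzero$ for $a \neq b$, and positive semidefiniteness $\BQ \succeq 0$ with the correct null space. The first is essentially immediate from \eqref{eq:prox-bal}, because each $B_{ij}$ is built out of the signed projection quantities whose minimum is exactly the left-hand side of \eqref{eq:prox-bal}, so strict positivity comes for free. The hard part will be $\BQ \succeq 0$. Here I would decompose $\BQ$ into a block-diagonal piece governed by the centered Gram matrices $\overline{\BX}_a\overline{\BX}_a^\top$ plus a low-rank correction supported on the span of the vectors $\bone_{\Gamma_a}$ and $\BX_a \bw_{a,b}$, and then restrict the quadratic form $\bv^\top \BQ \bv$ to the orthogonal complement of $\mathrm{span}\{\bone_{\Gamma_a} : 1 \leq a \leq k\}$, the desired null space of $\BQ$. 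On this complement the estimate decouples across pairs $(a,b)$, and each pair contributes a spectral term controlled by $\|\overline{\BX}_a\|^2 + \|\overline{\BX}_b\|^2$; the factor $\sqrt{k/(4n)}$ on the right-hand side of \eqref{eq:prox-bal} is calibrated exactly so that, after summing these pairwise contributions over all $\binom{k}{2}$ pairs, $\BQ$ remains strictly positive definite on the complement. This yields the correct null space of $\BQ$ and hence uniqueness of the primal minimizer $\BX$.
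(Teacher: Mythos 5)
Your overall architecture is the same as the paper's: a conic/Lagrangian dual certificate for \eqref{eq:sdp-bal} in which the diagonal constraint is dualized by a vector rather than a scalar, and you correctly identify that this extra per-coordinate freedom is what localizes the condition (the paper in fact only uses $k$ of those $N$ degrees of freedom, taking the diagonal multiplier block-constant, $\bz_a=z_a\bone_n$, and choosing $z_a=2k\|\overline{\BX}_a\|^2$). However, two steps in your outline are misdirected in ways that matter.

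First, you have inverted the roles of the two verification steps. You claim entrywise positivity of $\BB^{(a,b)}$ is ``essentially immediate'' from \eqref{eq:prox-bal} and that the constant $\sqrt{k/(4n)}$ is calibrated in the positive semidefiniteness check. In fact the PSD check is where the diagonal multipliers get fixed: writing $\BB_{\TB}-\BM_{\TB}=2\widehat{\BX}\BW\widehat{\BX}^\top$ with $\BW$ a $k\times k$ block matrix of reflections, the bound $\BW\preceq k\I_{mk}$ forces $z_a= 2k\|\overline{\BX}_a\|^2$; this step consumes no part of the proximity condition. The proximity condition is then used, in its entirety, for the entrywise positivity of $\BB^{(a,b)}$ --- and that step is not free, because $\BB^{(a,b)}$ is not freely designable: complementary slackness pins down its $T$-component as $\BM_T^{(a,b)}-\frac{z_a+z_b}{2n}\BJ_{n\times n}$, so with the choice $\BB_{\TB}^{(a,b)}=4\bu_{a,b}\bu_{b,a}^\top$ each entry has the form $f(-u_{a,b,i},-u_{b,a,j})$ with $f(x,y)=-4xy-2h_{a,b}(x+y)+\frac{z_a+z_b}{2n}-h_{a,b}^2$, whose sign is not determined termwise. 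One must maximize this bilinear function over $x,y\ge -\tau_{a,b}$, and the resulting requirement $h_{a,b}-2\tau_{a,b}>\sqrt{(z_a+z_b)/(2n)}=\sqrt{k(\|\overline{\BX}_a\|^2+\|\overline{\BX}_b\|^2)/n}$ \emph{is} condition \eqref{eq:sdp-bal2}. Asserting this is immediate skips the main place the hypothesis is used.

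Second, your uniqueness criterion asks for $\Null(\BQ)=\mathrm{span}\{\bone_{\Gamma_a}\}$, i.e.\ strict positive definiteness of $\BQ$ on the orthogonal complement. The certificate you (and the paper) construct only yields $\BQ\succeq\bzero$ there --- the operator-norm bounds used to set $z_a$ give no strict margin in general --- so if you insist on the null-space route you will not be able to close the argument. The fix is the one the paper uses: uniqueness follows from $\BQ\succeq\bzero$ together with the entrywise strict positivity $\BB^{(a,b)}>\bzero$ alone, via a support argument showing any alternative optimum $\widetilde{\BX}$ must have $\widetilde{\BX}^{(a,b)}=\bzero$ off the block diagonal and then, from $\widetilde{\BX}\bone_N=\bone_N$, $\Tr(\widetilde{\BX})=k$ and $\widetilde{\BX}\succeq\bzero$, must coincide with $\BX$. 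You should drop the null-space requirement and supply that argument instead.
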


Compared with the proximity condition for Peng and Wei's relaxation \eqref{eq:prox}, the proximity condition for Amini and Levina's relaxation distinguishes itself by decoupling the clusters in the sense that each of the $k(k-1)$ inequalities in \eqref{eq:prox-bal} only depends on the two clusters involved in the inequality. In the case of balanced clusters, this immediately solves the open question posed by Awasthi and Sheffet \cite{AwasthiS12}, which asks if such a proximity condition exists.

The completely localized proximity condition is particularly meaningful when there are a few abnormal clusters whose covariance matrices are huge in matrix operator norm, but at the same time being away from all the other clusters. In this case, the proximity condition for Amini and Levina's relaxation has far better chance than that for Peng and Wei's relaxation to detect a reasonable partition of the data set. Figure~\ref{fig:prox_bal_ex} provides such an example.

\begin{figure}[h]
\centering
\includegraphics[width = 100mm]{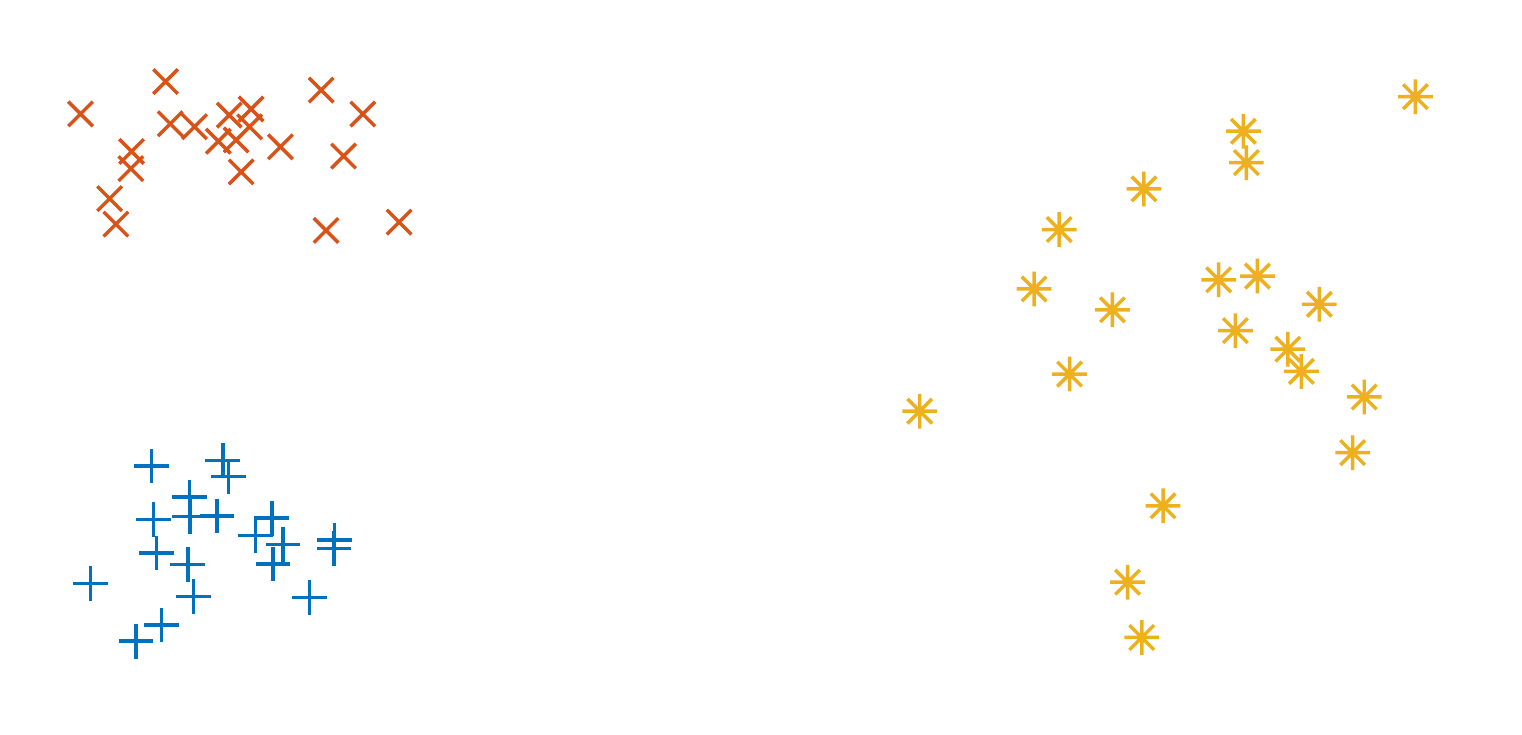}
\caption{An example of three clusters in the plane. Each contains 20 points. The proximity for the general case \eqref{eq:prox} fails for this instance. However, the proximity condition for balanced clusters \eqref{eq:prox-bal} is satisfied and hence ensures the partition is optimal to the $k$-means problem with balanced assumption.}
\label{fig:prox_bal_ex}
\end{figure}

Analogously, we can also prove a necessary condition for the Amini-Levina relaxation, which can be compared with Theorem~\ref{thm:lower} for the general case.
\begin{theorem}[\bf Necessary condition for balanced clusters] \label{thm:lower-bal}
Suppose  $\BX=\sum_{a=1}^k\frac{1}{|\Gamma_a|}\bone_{\Gamma_a}\bone_{\Gamma_a}^\top$ is a global minimum of \eqref{eq:sdp-bal}. 
Then the partition $\{\Gamma_a\}_{a=1}^k$ must satisfy
\begin{equation} \label{eq:necessary-bal}
h_{a,b} \geq \tau_{a,b} + \sqrt{\tau_{a,b}^2 +  \frac{1}{n} \left( \| \overline{\BX}_a \|^2 + \| \overline{\BX}_b \|^2 \right)}, \quad \forall a \neq b.
\end{equation}
\end{theorem}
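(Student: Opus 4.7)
The plan is to prove the theorem by contradiction, paralleling the proof of Theorem \ref{thm:lower} for the Peng-Wei case but adapting it to the strictly stronger diagonal-equality constraint $\diag(\BZ) = \frac{1}{n}\bone_N$. So I assume $\BX = \sum_a \frac{1}{n}\bone_{\Gamma_a}\bone_{\Gamma_a}^\top$ is a global minimum of \eqref{eq:sdp-bal} while \eqref{eq:necessary-bal} is violated for some pair $a\neq b$, and construct a symmetric perturbation $\BH$ such that $\BX+\epsilon\BH$ remains feasible for all small $\epsilon>0$ and $\langle \BD,\BH\rangle<0$, contradicting optimality.

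To build $\BH$, let $\bv_a$ and $\bv_b$ denote the top left singular vectors of $\overline{\BX}_a$ and $\overline{\BX}_b$. Since $\overline{\BX}_a^\top \bone_{n_a}=0$, the vector $\bone_{n_a}$ lies in the left null space of $\overline{\BX}_a^\top$, and hence $\bv_a\perp\bone_{n_a}$; likewise $\bv_b\perp\bone_{n_b}$. I would take $\BH$ supported only on the four blocks indexed by $(a,a)$, $(a,b)$, $(b,a)$, $(b,b)$. On the $(a,b)$ block I set $\alpha\, \bv_a \bv_b^\top + \lambda\, \bone_{n_a}\bone_{n_b}^\top$, where the additive constant $\lambda$ is chosen large enough relative to $\alpha$ to make that block entrywise nonnegative (necessary because the $(a,b)$ block of $\BX$ vanishes). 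On the within-cluster blocks I place symmetric zero-diagonal matrices aligned with the singular directions (essentially $\bv_a\bv_a^\top$ minus its diagonal, plus a uniform row-sum correction, and analogously for $b$) whose row sums exactly cancel those produced by the cross-cluster blocks. By construction $\BH\bone_N=0$ and $\diag(\BH)=0$. Positive semidefiniteness of $\BX+\epsilon\BH$ reduces, via first-order perturbation theory at the boundary of the PSD cone, to the single quadratic condition $\bv^\top\BH\bv\geq 0$ for every $\bv\in\ker(\BX)$ (i.e., mean-zero on each cluster), which translates into one scalar inequality relating $\alpha$ and $\lambda$.

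With $\BH$ in hand, I would compute $\langle\BD,\BH\rangle$: expanding $\BD_{ij}=\|\bx_i\|^2+\|\bx_j\|^2-2\bx_i^\top \bx_j$ and using $\BH\bone_N=0$ together with $\diag(\BH)=0$, the separable pieces $\|\bx_i\|^2,\|\bx_j\|^2$ vanish from the inner product. Substituting $\bx_{a,i}=\overline{\bx}_{a,i}+\bc_a$ and invoking $\bv_a\perp\bone_{n_a}$, $\bv_b\perp\bone_{n_b}$, what remains collapses to a quadratic form in $(\alpha,\lambda)$ whose coefficients are built from $h_{a,b}$, $\tau_{a,b}$, $\|\overline{\BX}_a\|$, and $\|\overline{\BX}_b\|$. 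A Schur-complement (or Lagrangian) optimization of this quadratic subject to the PSD-on-kernel constraint shows that its minimum is strictly negative exactly when $h_{a,b}(h_{a,b}-2\tau_{a,b}) < \frac{1}{n}(\|\overline{\BX}_a\|^2+\|\overline{\BX}_b\|^2)$, which is the negation of \eqref{eq:necessary-bal}; this produces the contradiction.

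The main obstacle is the zero-diagonal constraint, which is strictly stronger than the trace constraint in the Peng-Wei setting and immediately rules out the natural rank-one direction $\bp\bp^\top$ (whose diagonal is $\bp\circ\bp\neq 0$). Simultaneously enforcing zero row sums, zero diagonal, entrywise nonnegativity on the cross-cluster blocks, and PSD-ness on $\ker(\BX)$ — all while keeping the perturbation aligned with the extremal singular directions so that $\|\overline{\BX}_a\|$ and $\|\overline{\BX}_b\|$ (rather than Frobenius norms) appear — is the delicate balancing act. On the other hand, it is precisely this confinement of the feasible direction to the two clusters $a,b$, with no ``trace slack'' available from the other clusters, that produces the clean two-cluster bound $\|\overline{\BX}_a\|^2+\|\overline{\BX}_b\|^2$ in \eqref{eq:necessary-bal}, in contrast to the global $\max_t\|\overline{\BX}_t\|^2$ appearing in Theorem \ref{thm:lower}.
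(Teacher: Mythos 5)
Your overall strategy---a primal improving-direction argument by contrapositive---is genuinely different from the paper's proof, which is dual: since strong duality and attainment hold (Slater's condition is verified in Lemma~\ref{lem:slater_balanced}), optimality of $\BX$ forces the existence of $(z_a,\BB)$ with $\BQ=\BF+\BM-\BB\succeq\bzero$, $\BB\geq\bzero$, $\BB^{(a,a)}=\bzero$; the PSD diagonal block then gives $z_a\geq 2\|\overline{\BX}_a\|^2$, and the row-sum identity $\BM^{(a,b)}\bone_n=\BB^{(a,b)}\bone_n+\frac{z_a+z_b}{2}\bone_n$ together with $\BB^{(a,b)}\geq\bzero$ gives $n\bigl(h_{a,b}^2-2h_{a,b}(\bu_{a,b})_i\bigr)\geq\frac{z_a+z_b}{2}$ \emph{for every entry} $i$, which is exactly how the max $\tau_{a,b}$ enters. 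A primal argument is possible in principle (by duality, failure of the certificate implies an improving feasible direction exists), but your specific construction has a gap.

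The gap is that $\tau_{a,b}$ cannot appear in $\langle\BD,\BH\rangle$ for the $\BH$ you describe. Using $\BH\bone_N=\bzero$ and symmetry, $\langle\BD,\BH\rangle=-2\Tr(\BW^\top\BH\BW)$, and with $\BH^{(a,b)}=\alpha\,\bv_a\bv_b^\top+\lambda\,\bone_{n}\bone_{n}^\top$ the cross-block contribution is $\alpha\langle\overline{\BX}_a^\top\bv_a,\overline{\BX}_b^\top\bv_b\rangle+\lambda n^2\langle\bc_a,\bc_b\rangle$ (after using $\bv_a\perp\bone_n$, $\bv_b\perp\bone_n$); the within-cluster blocks contribute terms in $\|\overline{\BX}_a\|^2$, $\|\overline{\BX}_a\|_F^2$, and $h_{a,b}^2$. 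Nowhere does $\tau_{a,b}=\max\{\max(\bu_{a,b}),\max(\bu_{b,a})\}$ enter: the uniform part $\bone_n\bone_n^\top$ only sees the \emph{average} of $\bu_{a,b}$, which is zero since $\bone_n^\top\overline{\BX}_a=\bzero$, and the singular-vector part sees the alignment of the top right singular subspaces, which is unrelated to $\tau_{a,b}$. To make $\tau_{a,b}$ appear you would have to weight the rows of $\BH^{(a,b)}$ non-uniformly so as to concentrate on the index attaining the maximum of $\bu_{a,b}$ (this is the primal shadow of the paper's use of entrywise nonnegativity of $\BB^{(a,b)}$), and that conflicts with the rank-one structure you rely on for the PSD-on-kernel analysis. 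As written, your quadratic form can at best yield a necessary condition of the form $h_{a,b}^2\geq\frac1n(\|\overline{\BX}_a\|^2+\|\overline{\BX}_b\|^2)+\cdots$ without the $-2h_{a,b}\tau_{a,b}$ term, which is strictly weaker than \eqref{eq:necessary-bal} whenever $\tau_{a,b}>0$. A secondary, fixable issue: the condition $\bv^\top\BH\bv\geq0$ for $\bv\in\Ker(\BX)$ is necessary but not sufficient for $\BX+\epsilon\BH\succeq\bzero$ at small $\epsilon>0$ when $\PP_{\Ker(\BX)}\BH\PP_{\Ker(\BX)}$ is singular on the kernel (which it is here, being low rank); one must also control $\BH\bv$ for kernel vectors annihilated by the compressed form.
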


\section{Results under random models}
\label{sec:sbm_and_gmm}

Next we apply the proximity condition \eqref{eq:prox} to data sets generated from the generalized stochastic ball model and the Gaussian mixture model, respectively. We first give a formal definition for each model and then present  the minimal separation condition which is sufficient to guarantee the exact recovery of underlying clusters by the Peng-Wei relaxation. The minimal separation conditions are established by verifying the proximity condition~\eqref{cond:prox} for those two random models. For proofs, see Sections~\ref{sec:sbm} and~\ref{sec:gmm}.

\subsection{Stochastic ball model}

The definition of generalized stochastic ball model is given as follows where we only assume the support of the density function is contained in the unit ball of $\RR^m$ for all clusters. 
\begin{definition}[\bf Generalized stochastic ball model]
Let $\{\bmu_a\}_{a=1}^k$ be a set of $k$ deterministic vectors in $\mathbb{R}^m$. For each $1 \leq a \leq k$, $\mathcal{D}_a$ is a distribution supported on the unit ball of $\RR^m$ with a covariance matrix $\bSigma_a$ and $\{\br_{a,i}\}_{i=1}^{n_a}$ are i.i.d. zero-mean random vectors drawn from the distribution $\mathcal{D}_a$. The $a$-th cluster is formed by $\{\bx_{a,i}\}_{i=1}^{n_a}$, where $\bx_{a,i}=\bmu_a+\br_{a,i}$ for $1\leq i\leq n_a$. 
\end{definition}

\begin{corollary}\label{cor:rbm}
Denote $\sigma^2_{\max} = \max_{1\leq a\leq k}\|\bSigma_a\|$, $N = \sum_{a=1}^k n_a$, $w_{\min} = \frac{1}{N}\min_{1\leq a\leq k}n_a$, and $\Delta = \min_{a\neq b}\|\bmu_a - \bmu_b\|.$
For the generalized stochastic ball model, we draw $n_a$ points from the $a$-th ball for each $1 \leq a \leq k$.  The Peng-Wei relaxation achieves exact recovery with probability at least $ 1 - N^{-\gamma}$ if $N \geq \frac{4}{w_{\min}} \log(4 kmN^{\gamma})$ and
\begin{equation}\label{eq:rbm-prox-main}
\Delta \geq 2 + \sqrt{\frac{2}{w_{\min}}} \sigma_{\max} + 7 \sqrt{\frac{t}{w_{\min}}},
\end{equation}
where $t = \sqrt{\frac{4\log(4kmN^{\gamma}) }{Nw_{\min}}}$ and $\gamma > 0$.
In particular, if $n_a = n$ for all $a$, $w_{\min} = \frac{1}{k}$  and each $\mathcal{D}_a$ is a uniform distribution over the unit ball of $\RR^m$, then~\eqref{eq:rbm-prox-main} can be simplified to
\begin{equation*}
\Delta \geq 2 + \sqrt{\frac{2k}{m+2}} + 7\sqrt{tk}
\end{equation*}
by noting that $\sigma^2_{\max} = \|\bSigma_a\| = \frac{1}{m+2}.$
\end{corollary}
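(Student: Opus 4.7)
The plan is to verify the proximity condition in its equivalent form~\eqref{eq:prox1} for data drawn from the generalized stochastic ball model, with high probability, and then specialize to the uniform distribution to obtain the stated simplification.

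First I would decompose each data point as $\bx_{a,i}=\bmu_a+\br_{a,i}$, so that
\[
\bc_a=\bmu_a+\bar{\br}_a,\qquad \overline{\BX}_a=\BR_a-\bone_{n_a}\bar{\br}_a^{\top},
\]
where $\bar{\br}_a:=\tfrac{1}{n_a}\sum_{i}\br_{a,i}$ and $\BR_a^{\top}=[\br_{a,1},\ldots,\br_{a,n_a}]$. Using this decomposition I would derive three purely deterministic bounds expressed in terms of the stochastic quantities $\|\bar{\br}_a\|$ and $\|\BR_a\|$: (i) the reverse triangle inequality gives $h_{a,b}\geq \Delta-\|\bar{\br}_a\|-\|\bar{\br}_b\|$; (ii) since $\|\br_{a,i}\|\leq 1$ and $\bw_{a,b}$ is a unit vector, $\max_i\langle\br_{a,i}-\bar{\br}_a,\bw_{a,b}\rangle\leq 1+\|\bar{\br}_a\|$ so that $\tau_{a,b}\leq 1+\max(\|\bar{\br}_a\|,\|\bar{\br}_b\|)$; (iii) by the triangle inequality on operator norms, $\|\overline{\BX}_a\|\leq \|\BR_a\|+\sqrt{n_a}\,\|\bar{\br}_a\|$. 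Substituting these three bounds into~\eqref{eq:prox1} reduces the proximity condition to an inequality of the schematic form
\[
\Delta > 2 + \sqrt{\textstyle\sum_{l}\|\BR_l\|^2\!\left(\tfrac{1}{n_a}+\tfrac{1}{n_b}\right)} + (\text{lower-order terms in }\|\bar{\br}_l\|).
\]

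Next I would execute two concentration steps. \textbf{(a) Sample mean.} For each cluster $a$, because $\br_{a,i}$ is zero-mean, bounded in norm by $1$, and has covariance $\bSigma_a$ with $\|\bSigma_a\|\leq\sigma_{\max}^2$, a vector Bernstein inequality yields $\|\bar{\br}_a\|\lesssim \sigma_{\max}\sqrt{\log(4kmN^{\gamma})/n_a}+\log(4kmN^{\gamma})/n_a$ with probability at least $1-\tfrac14 N^{-\gamma}/k$. \textbf{(b) Spectral norm of the noise matrix.} I would apply the matrix Bernstein inequality to the sum $\BR_a^{\top}\BR_a-n_a\bSigma_a=\sum_i(\br_{a,i}\br_{a,i}^{\top}-\bSigma_a)$, whose summands have operator norm bounded by $1$ and matrix variance controlled by $\sigma_{\max}^2$, to obtain $\|\BR_a\|^2\leq n_a\sigma_{\max}^2+O\bigl(\sqrt{n_a\sigma_{\max}^2\log(4kmN^{\gamma})}+\log(4kmN^{\gamma})\bigr)$. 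Under the hypothesis $N\geq 4w_{\min}^{-1}\log(4kmN^{\gamma})$, the parameter $t=\sqrt{4\log(4kmN^{\gamma})/(Nw_{\min})}$ satisfies $t\leq 1$, so after a union bound over $a\in\{1,\dots,k\}$ all the deviation terms can be absorbed into a single bound of the order $\sqrt{t/w_{\min}}$.

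Finally, plugging the clean estimates $\sum_{l}\|\BR_l\|^2\leq N\sigma_{\max}^2+(\text{noise})$ and $\tfrac{1}{n_a}+\tfrac{1}{n_b}\leq \tfrac{2}{Nw_{\min}}$ into the reduced condition yields $\Delta>2+\sigma_{\max}\sqrt{2/w_{\min}}+7\sqrt{t/w_{\min}}$, i.e.,~\eqref{eq:rbm-prox-main}; exact recovery then follows from Theorem~\ref{thm:main}. For the uniform distribution on the unit ball of $\RR^m$, the identity $\bSigma_a=\tfrac{1}{m+2}\BI$ gives $\sigma_{\max}^2=\tfrac{1}{m+2}$, and with $n_a=n$ for every $a$ we have $w_{\min}=\tfrac1k$, producing the stated simplification.

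\textbf{Main obstacle.} The most delicate step is securing the concentration bounds (a) and (b) without picking up a spurious $\sqrt{m}$ factor that would dominate the leading term $\sigma_{\max}\sqrt{2/w_{\min}}$ in~\eqref{eq:rbm-prox-main}; using Bernstein-type inequalities that exploit the covariance bound $\sigma_{\max}^2$ (rather than naive coordinatewise Hoeffding) is essential. One must also check that the single union bound over $a\in\{1,\dots,k\}$ costs only an additive $\log k$ that is already absorbed in $\log(4kmN^{\gamma})$, and that the logarithmic tail remains a lower-order term thanks to the sample-size hypothesis $N\geq 4w_{\min}^{-1}\log(4kmN^{\gamma})$.
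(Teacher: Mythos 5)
Your proposal follows essentially the same route as the paper's proof: it verifies the proximity condition \eqref{eq:prox1} by bounding $h_{a,b}$, $\tau_{a,b}$, and $\|\overline{\BX}_a\|$ through the decomposition $\bx_{a,i}=\bmu_a+\br_{a,i}$, applies matrix Bernstein to both the sample means and the Gram matrices $\sum_i(\br_{a,i}\br_{a,i}^{\top}-\bSigma_a)$ (the paper's Lemma~\ref{lem:rbm}), and takes a union bound over the $k$ clusters with the same choice of $t$. The only cosmetic difference is that the paper bounds the sample-mean deviation using the crude variance proxy $\Tr(\bSigma_a)\leq 1$ rather than $\sigma_{\max}^2$, which already avoids the $\sqrt{m}$ loss you flag since the dimension enters only through the $(m+1)$ prefactor absorbed into $\log(4kmN^{\gamma})$.
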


\begin{remark}
As the number of data points $N$ goes to infinity  provided $k$ and $w_{\min}$ are fixed,  the value of $t = \sqrt{\frac{4\log(4kmN^{\gamma}) }{Nw_{\min}}}$ vanishes. So asymptotically the minimal separation condition reduces to $\Delta > 2 + \sqrt{\frac{2k}{m+2}}$ when $n_a = n$ and $\bSigma_a = \frac{1}{m+2}\I_m$. Note that we only assume that the distribution is supported on the unit ball, so rotation-invariant distributions which are assumed in~\cite{IguchiMPV17,iguchi2015tightness} are also included. Compared with the result in~\cite{IguchiMPV17,iguchi2015tightness} where $\Delta > 2 + \frac{k^2}{m}$ is required, we have achieved a better bound when $k$ is large.
\end{remark}

We can also apply the necessary lower bound (Theorem~\ref{thm:lower}) to the generalized stochastic ball model. To illustrate this, let us study a special case where the following Corollary holds.
\begin{corollary} \label{cor:lower-rbm}
For the generalized ball model, if for all $1 \leq a \leq k$ we have $n_a = n$, then with high probability, the Peng-Wei relaxation fails to achieve exact recovery provided that $N$ is large enough and
\begin{equation*}
\Delta <   1 + \sqrt{1 + 2 \sigma_{\max}^2}.
\end{equation*}
If for any $a$, ${\cal D}_a$ is the uniform distribution over the unit ball, the bound becomes
\begin{equation*}
\Delta <   1 + \sqrt{1 + \frac{2}{m+2}}.
\end{equation*}
\end{corollary}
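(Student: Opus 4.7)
The plan is to apply Theorem~\ref{thm:lower} in its contrapositive form: if some pair $(a,b)$ violates the necessary inequality \eqref{eq:necessary}, the Peng-Wei relaxation cannot exactly recover $\{\Gamma_a\}_{a=1}^k$. I would aim to show that for $N$ sufficiently large and with high probability, the pair $(a^\ast,b^\ast)$ attaining $\|\bmu_{a^\ast}-\bmu_{b^\ast}\|=\Delta$ violates \eqref{eq:necessary} whenever $\Delta < 1 + \sqrt{1 + 2\sigma_{\max}^2}$.

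The argument reduces to three asymptotic probabilistic facts which I would establish in order. First, since each $\br_{a,i}$ lies in the unit ball, Hoeffding-type concentration forces $\bc_a\to\bmu_a$ uniformly in $a$, hence $h_{a^\ast,b^\ast}\to\Delta$. Second, a matrix Bernstein bound applied to the i.i.d.\ rank-one summands $\br_{a,i}\br_{a,i}^{\top}$ gives $\tfrac{1}{n}\overline{\BX}_a^{\top}\overline{\BX}_a\to\bSigma_a$ in operator norm uniformly in $a$, so $\tfrac{1}{n}\max_t\|\overline{\BX}_t\|^2\to\sigma_{\max}^2$. Third, $\bw_{a^\ast,b^\ast}$ concentrates at the deterministic direction $(\bmu_{b^\ast}-\bmu_{a^\ast})/\Delta$; provided the support of $\mathcal{D}_{a^\ast}$ reaches the unit sphere along this direction (automatic for the uniform distribution by rotational symmetry), the extreme statistic $\max_i\langle\br_{a^\ast,i},\bw_{a^\ast,b^\ast}\rangle$ concentrates at $1$, and together with $\tfrac{1}{n}\sum_i\br_{a^\ast,i}\to\bzero$ this yields $\tau_{a^\ast,b^\ast}\to 1$ in probability.

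Substituting these limits into \eqref{eq:necessary} for $(a^\ast,b^\ast)$, the right-hand side converges in probability to $1 + \sqrt{1 + 2\sigma_{\max}^2}$ while the left-hand side converges to $\Delta$. Any strict inequality $\Delta < 1 + \sqrt{1+2\sigma_{\max}^2}$ therefore forces \eqref{eq:necessary} to fail with probability tending to $1$, proving the first claim by the contrapositive of Theorem~\ref{thm:lower}. For the uniform distribution on the unit ball, rotational symmetry forces $\bSigma_a = \tfrac{1}{m+2}\I$, so $\sigma_{\max}^2 = \tfrac{1}{m+2}$ and the bound simplifies to $\Delta < 1 + \sqrt{1+2/(m+2)}$ as stated.

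The main obstacle is making the third step quantitative enough to produce an explicit high-probability rate. One needs an effective lower bound on the probability that a single sample lands inside the spherical cap $\{r:\langle r,\bw\rangle\geq 1-\varepsilon\}$; for the uniform distribution this reduces to a standard cap-volume computation of order $\varepsilon^{(m+1)/2}$, after which a routine extreme-value argument combined with a union bound controls $\tau_{a,b}$. Coordinating this delicate boundary behavior with the fluctuation of the random direction $\bw_{a,b}$---for instance via an $\varepsilon$-net over the relevant sphere of directions---is where most of the technical effort lies; the remaining steps are standard concentration estimates together with the algebraic rearrangement of \eqref{eq:necessary}.
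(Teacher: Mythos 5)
Your proposal is correct and follows essentially the same route as the paper's own (quite brief) proof outline: apply Theorem~\ref{thm:lower} in contrapositive to the closest pair of centers, and use concentration to show $h_{a,b}\to\Delta$, $\tau_{a,b}\to 1$, and $\tfrac{1}{n}\max_t\|\overline{\BX}_t\|^2\to\sigma_{\max}^2$, so the right-hand side of \eqref{eq:necessary} tends to $1+\sqrt{1+2\sigma_{\max}^2}$. You are in fact more careful than the paper in flagging that $\tau_{a,b}\to 1$ requires the support of $\mathcal{D}_a$ to reach the unit sphere in the relevant direction --- a hypothesis the paper's outline silently assumes by restricting to the uniform case.
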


\subsection{Gaussian mixture model}
The definition of Gaussian mixture model is given below, followed by the minimal separation condition for the exactness of the Peng-Wei relaxation.
\begin{definition}[\bf Gaussian mixture model]
Consider a mixture of $k$ Gaussian distributions $\mathcal{N}(\bmu_a, \bSigma_a)$ in $\RR^m$ with a set of weights $\{w_a\}_{a=1}^k$ obeying $w_a\geq 0$ and $\sum_{a=1}^kw_a=1$. The probability density function of this mixture model is
\begin{equation*}
p(\bx) = \sum_{a=1}^k w_a p_\mathcal{N}(\bx; \bmu_a, \bSigma_a), \quad \bx \in \RR^m,
\end{equation*}
where $p_\mathcal{N}(\bx; \bmu_a, \bSigma_a)$ is the probability density function of the Gaussian distribution $\mathcal{N}(\bmu_a, \bSigma_a)$. 
\end{definition}

\begin{corollary}
\label{cor:gmm}
Denote $\sigma^2_{\max} = \max_{1\leq a\leq k}\{\|\bSigma_a\|\}$, $w_{\min} = \min_{1\leq a\leq k}\{w_a\}$ and $\Delta = \min_{a\neq b}\|\bmu_a - \bmu_b\|$.
For the  Gaussian mixture model, the Peng-Wei relaxation achieves exact recovery with probability at least $1 - 6N^{-1}$ if
\begin{equation*}
\Delta  \geq \sigma_{\max} \left( \frac{2}{\sqrt{w_{\min}}} + 4\sqrt{2} \log^{1/2} (k N^{2}) +  q(N;m,k,w_{\min}) \right),
\end{equation*}
where $q(N;m,k,w_{\min}) = o(1)$ if $N \gg m^2 k^2 \log(k)/ w_{\min}$.
In particular, if $n_a = n$ and $\bSigma_a = \I_m$  for all $1\leq a\leq k$, then the above condition reduces to
\begin{equation*}
\Delta \geq 2\sqrt{k} + 4\sqrt{2} \log^{1/2} (k N^{2}) + q(N;m,k,1/k),
\end{equation*}
and $q(N;m,k,1/k) = o(1)$ if $N \gg m^2 k^3 \log(k)$.

\end{corollary}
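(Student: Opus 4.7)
The plan is to invoke Theorem~\ref{thm:main} by checking that the equivalent proximity condition \eqref{eq:prox1} holds with high probability for every pair $a\neq b$. This reduces the task to three probabilistic estimates: a lower bound on $h_{a,b}=\|\bc_a-\bc_b\|$, an upper bound on $\tau_{a,b}$, and an upper bound on $\sqrt{\sum_l \|\overline{\BX}_l\|^2(1/n_a+1/n_b)}$. Matching these against the right-hand side of the claimed separation bound tells us which concentration phenomena should feed the main terms $\sigma_{\max}\sqrt{2/w_{\min}}$ and $\sigma_{\max}\log^{1/2}(kN^2)$, and which should be absorbed into the residual $q(N;m,k,w_{\min})$.

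First I would handle the cluster sizes and sample means. The vector $(n_1,\dots,n_k)$ is multinomial with parameters $(N;w_1,\dots,w_k)$, so a Chernoff bound gives $n_a\geq Nw_a/2\geq Nw_{\min}/2$ simultaneously for all $a$ with probability $1-O(N^{-1})$ when $N\gg\log(k)/w_{\min}$. Conditional on the cluster labels, $\bc_a-\bmu_a\sim\mathcal{N}(0,\bSigma_a/n_a)$, so Gaussian-norm concentration gives $\|\bc_a-\bmu_a\|\leq\sigma_{\max}(\sqrt{m}+O(\sqrt{\log N}))/\sqrt{n_a}$ and hence $h_{a,b}\geq\Delta - O(\sigma_{\max}\sqrt{m/(Nw_{\min})})$. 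The centered data matrix $\overline{\BX}_a=(\BI-n_a^{-1}\bone\bone^\top)\BX_a$ is, up to a rank-one projector, a Gaussian matrix with row covariance $\bSigma_a$, so the operator-norm tail bound yields $\|\overline{\BX}_a\|\leq\sigma_{\max}(\sqrt{n_a}+\sqrt{m}+O(\sqrt{\log N}))$. Summing squares, using $\sum_a n_a=N$, and multiplying by $1/n_a+1/n_b\leq 2/(Nw_{\min})$ gives
\[
\sqrt{\textstyle\sum_l \|\overline{\BX}_l\|^2(1/n_a+1/n_b)}\;\leq\;\sigma_{\max}\sqrt{2/w_{\min}}\,\bigl(1+O(\sqrt{mk/N}+\sqrt{k\log N/N})\bigr),
\]
with the error absorbed into $q$.

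The delicate step is the bound on $\tau_{a,b}$, since $\bw_{a,b}=(\bc_b-\bc_a)/\|\bc_b-\bc_a\|$ depends on the within-cluster fluctuations $\br_{a,i}$. I would condition on $(\bc_a,\bc_b)$, which fixes $\bw_{a,b}$; the conditional law of $\br_{a,i}-\bar\br_a$ then remains Gaussian with variance at most $\sigma_{\max}^2$ along any fixed direction, so $\max_i\langle\bx_{a,i}-\bc_a,\bw_{a,b}\rangle$ is a maximum of $n_a$ such Gaussians, bounded by $\sigma_{\max}\sqrt{2\log(kN^2)}(1+o(1))$ after union bounds over the $\binom{k}{2}$ pairs and the $\leq 2N$ points involved. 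Assembling the three estimates in \eqref{eq:prox1} shows that
$\Delta\geq\sigma_{\max}(2/\sqrt{w_{\min}}+4\sqrt{2}\log^{1/2}(kN^2)+q(N;m,k,w_{\min}))$
with $q$ collecting terms of order $\sqrt{m/(Nw_{\min})}+\sqrt{mk/N}+\sqrt{k\log N/N}$ suffices for the proximity condition, and $q=o(1)$ once $N\gg m^2k^2\log(k)/w_{\min}$. The specialization to $w_a=1/k$ and $\bSigma_a=\BI_m$ is then immediate, and the total failure probability $6N^{-1}$ follows by assigning $O(N^{-1})$ tail to each of the six relevant events. The main obstacle is precisely the decoupling of $\bw_{a,b}$ from the within-cluster data, which requires the conditioning argument sketched above; the remainder is careful bookkeeping of Gaussian concentration inequalities and union bounds.
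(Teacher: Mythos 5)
Your proposal is correct and follows the same overall skeleton as the paper's proof: reduce to verifying the proximity condition \eqref{eq:prox1} pairwise, then control $h_{a,b}$, $\tau_{a,b}$, and $\sqrt{\sum_l\|\overline{\BX}_l\|^2(1/n_a+1/n_b)}$ by Gaussian concentration and union bounds, with the leading terms matching $2\sigma_{\max}/\sqrt{w_{\min}}$ and the $\log^{1/2}(kN^2)$ term and everything else absorbed into $q$. The genuine divergence is in the one delicate step, the bound on $\tau_{a,b}$. The paper attacks $(\bx_{a,i}-\bc_a)^{\top}(\bc_a-\bc_b)$ head-on, splitting it into $J_1=(\bx_{a,i}-\bmu_a)^{\top}(\bc_a-\bmu_a)$, $J_2=(\bx_{a,i}-\bc_a)^{\top}(\bmu_a-\bc_b)$, and $-\|\bc_a-\bmu_a\|^2$, handling the residual dependence inside $J_1$ by peeling off the $i$-th summand and invoking the bilinear and quadratic Gaussian tail bounds of Lemma~\ref{lem:gauss}, and then dividing by $h_{a,b}\geq\|\bmu_a-\bmu_b\|/2$. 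You instead condition on $(\bc_a,\bc_b)$ and use the exact independence of the Gaussian sample mean and the residuals $\bx_{a,i}-\bc_a$ (zero covariance, jointly Gaussian), so that $\langle\bx_{a,i}-\bc_a,\bw_{a,b}\rangle$ is conditionally a centered Gaussian with variance at most $\sigma_{\max}^2$ and $\tau_{a,b}$ becomes a plain maximum of Gaussians. This is shorter, avoids Lemma~\ref{lem:gauss} entirely, and even saves the factor of $2$ the paper loses by replacing $\bc_a-\bc_b$ with $\bmu_a-\bmu_b$, so your argument lands comfortably inside the stated $4\sqrt{2}\log^{1/2}(kN^2)$ term; the price is that it leans on a property specific to Gaussians, whereas the paper's term-by-term decomposition is the kind of argument that would survive a relaxation to subgaussian mixtures. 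You also treat the cluster sizes as multinomial and Chernoff-bound them, which is more honest than the paper's simplifying assumption $n_a=w_aN$; this only perturbs the bookkeeping. Both routes deliver the claimed failure probability $6N^{-1}$ and the same asymptotics for $q$.
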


\section{Numerical experiments}\label{sec:numerics}

Consider applying the Peng-Wei relaxation to the generalized stochastic ball model. When the total number of the data points $N$ becomes large enough, the parameter $t$ vanishes and the sufficient lower bound predicted by Corollary~\ref{cor:rbm} as in \eqref{eq:rbm-prox-main} becomes
\begin{equation}
\label{eq:our-bound}
\Delta \geq 2 + \sigma_{\max} \sqrt{\frac{2}{w_{\min}}}.
\end{equation}
The state-of-the-art bound for the stochastic ball model proved in ~\cite{AwasBCKVW15,IguchiMPV17} is
\begin{equation}
\label{eq:sota-bound} 
\Delta > \min \left\{ 2\sqrt{2} \left(1+\frac{1}{\sqrt{m}}\right), 2+\frac{k^2}{m} \right\}.
\end{equation}

The exact phase transition bound, above which exact recovery can be achieved by the Peng-Wei relaxation of $k$-means, is smaller than both of the above sufficient lower bounds. As one would expect, the actual lower bound is hard to find in practice. The major difficulty occurs when the number of clusters $k$ is greater than 2. In this case, when creating an instance of the stochastic ball model with prescribed minimal separation distance $\Delta$, there are infinitely many possible ways to place the centers and this cannot be resolved by translation, rotation, and scaling. To address this, we investigate the worst case where centers are packed as compactly as possible while points in each cluster are chosen in the most scattered way. We have a better chance finding a more accurate lower bound under this arrangement.

Three instructive centroidal geometries, the geometries formed by the locations of the centers, are considered, and we call them circle-shaped geometry, line-shaped geometry, and hive-shaped geometry respectively. Centers are packed compactly under these shapes, especially the hive-shaped geometry. We can rescale the three geometries to change the minimal separation distance $\Delta$. An illustration of these geometries formed by the locations of the centers is shown in Figure~\ref{fig:center-shape}.

\begin{figure}[h]
\centering
\includegraphics[width = 150mm]{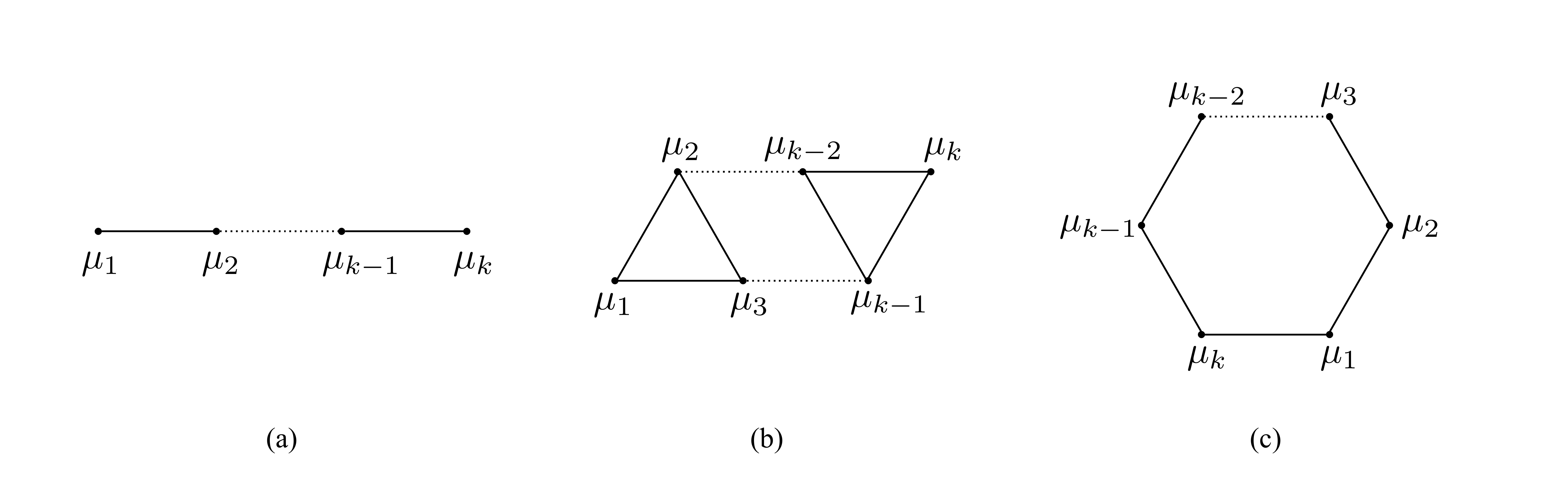}
 
\caption{Illustration of three instructive centroidal geometries. The minimal separation $\Delta$ is the distance between two adjacent centers. Our bound refers to \eqref{eq:our-bound} with parameters calculated for the given distribution. The state-of-the-art bound \eqref{eq:sota-bound} is the bound proved by ~\cite{AwasBCKVW15,IguchiMPV17}}
\label{fig:center-shape}
\end{figure}

We let the number of data points in each cluster be $n_a = 100$. Hence, the total number of points $N = 100k$. As a result, $w_{\min} = 1/k$. These $n_a$ points are equispaced points on the unit circle centered at $\bmu_a$. The data points are chosen in this way since it maximizes the variance. Because the data is isotropic and the variance is equal to $1$, we have $\sigma_{\max} =  1/\sqrt{m} = 1/\sqrt{2}$.

\begin{figure}[h]
\centering
\includegraphics[width = 100mm]{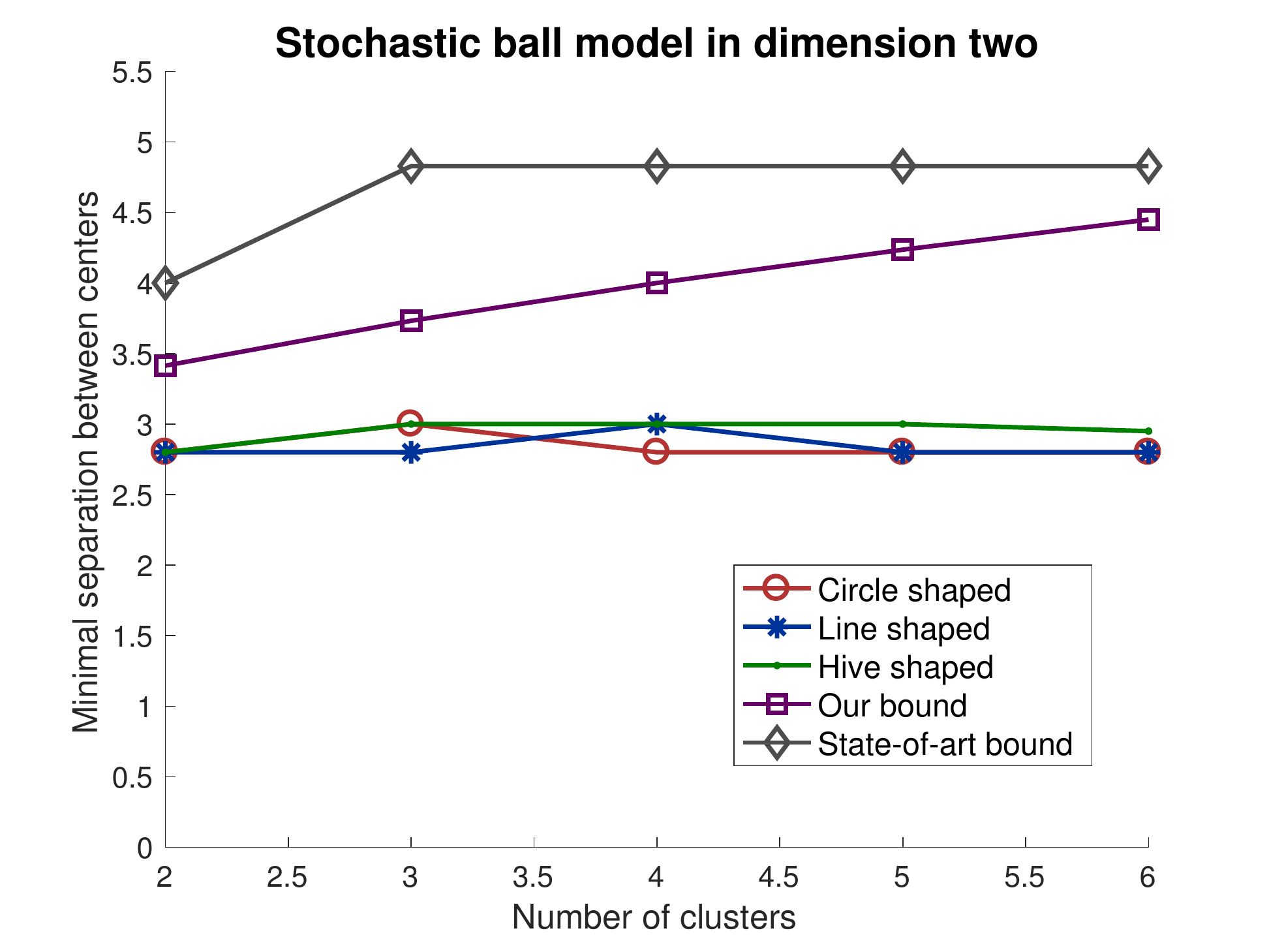}
\caption{Numerical experiment on the stochastic ball model with dimension 2 and number of clusters varying from 2 to 6. The sufficient lower bound here is the bound proved in Corollary~\ref{cor:rbm}. The Peng-Wei relaxation (SDP) is solved by SDPNAL+v0.5 (beta) \cite{yang2015sdpnal+, zhao2010newton}.}
\label{fig:sbm_dim2}
\end{figure}

For $k$ and $m$ chosen above, we can see that our bound is an improvement to the state-of-the-art result. Overall, it is still a meaningful addition to the state-of-the-art result. Nevertheless, it is not yet tight. Figure~\ref{fig:sbm_dim2} shows that the actual lower bound is almost independent of the parameter $k$, while our theory still relies on the assumption that $\Delta \geq 2 + \mathcal{O} ( \sqrt{k/m} )$.

Another parameter that may affect the bound is the dimension $m$. To reveal dependence of the bound on the dimension, we fix the number of clusters $k$ to be $2$ and let the dimension $m$ vary between $2$ and $10$. The center separation $\Delta$ is chosen among $100$ equispaced number between $2$ and $4$. The number of points in each cluster $n_a$ is equal to $25 \times 2^{m-1}$, so there are $N = 50 \times 2^{m-1}$ in total. The distribution $\mathcal{D}_a$ for each ball is the uniform distribution on the unit sphere centered at $\bmu_a$. For any fixed pair of $m$ and $\Delta$, we generate $20$ instances of the stochastic ball model.

\begin{figure}[h]
\centering
\includegraphics[width = 100mm]{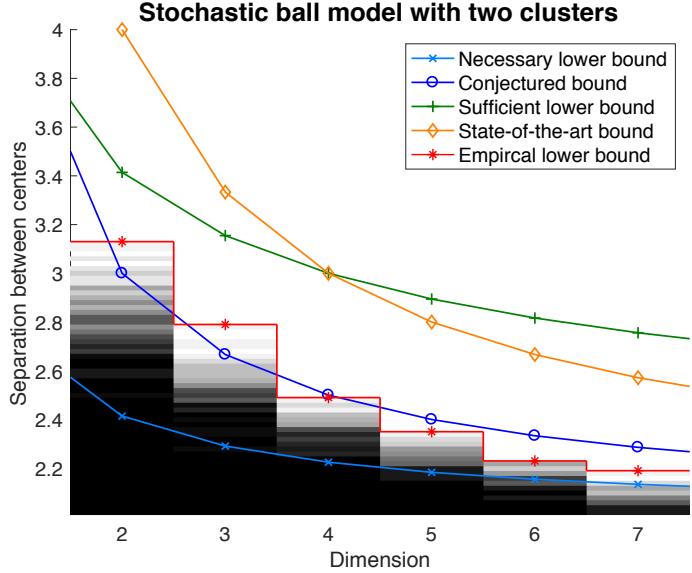}
\caption{Numerical experiment on the stochastic ball model with 2 clusters and dimension varying from 2 to 7. For given dimension and separation, the lighter the color is, the higher the probability of success is. The sufficient lower bound here is the bound given by Corollary~\ref{cor:rbm}, while the necessary lower bound is obtained by applying Theorem~\ref{thm:lower} directly to the stochastic ball model, which is $1 + \sqrt{1+2/m}$ in this case. Being constrained by computational resources, we are not able to sample more points in higher dimension since the time cost is prohibitive. This infers that the right half of the empirical lower bound is potentially smaller than the exact phase transition bound, which is what we are trying to approximate in this experiment. The Peng-Wei relaxation (SDP) is executed via SDPNAL+v0.5 (beta) \cite{yang2015sdpnal+, zhao2010newton}.}
\label{fig:sbm_k2}
\end{figure}

From Figure~\ref{fig:sbm_k2}, it is evident that neither our bound nor the state-of-the-art bound is tight. The blue line, which represents the bound $\Delta \geq 2 + \frac{2}{m}$, fits our empircal result the best. Based on the observation of dependence between the empirical lower bound and the parameters $k$ and $m$ as in Figure~\ref{fig:sbm_dim2} and \ref{fig:sbm_k2} , we formulate a conjecture as stated below.

\begin{conj}\label{conj:rbm}
For a mixture generated by the generalized stochastic ball model, the Peng-Wei relaxation achieves exact recovery with high probability if
\begin{equation}
\Delta \geq 2 + \mathcal{O} \left( \frac{1}{m} \right),
\end{equation}
provided that the total number of points $N$ is large enough.
\end{conj}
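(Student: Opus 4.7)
The plan is to bypass the proximity-based sufficient condition entirely and construct a dual certificate tailored to the stochastic ball geometry, because the proximity condition~\eqref{eq:rbm-prox-main} inherently carries a $\sqrt{k/m}$ penalty arising from the global quantity $\sqrt{\sum_{l=1}^k \|\overline{\BX}_l\|^2 (1/n_a + 1/n_b)}$. No refinement of the deduction leading to Corollary~\ref{cor:rbm} alone can reach the empirically observed $\mathcal{O}(1/m)$ rate: the $k$-dependence must be eliminated from the minimum separation requirement and reappear, at worst, as a $\log k$ factor absorbed in the ``high probability'' qualifier.

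First I would return to the primal-dual analysis of~\eqref{eq:sdp} underlying Theorem~\ref{thm:main} and explicitly split the dual variables $(\alpha,\bbeta,\BQ,\BR)$ (with $\BQ\succeq 0$, $\BR\geq 0$) into a deterministic ``mean-field'' part, obtained by replacing each $\bx_{a,i}$ by $\bmu_a$, plus a fluctuation part driven by $\br_{a,i} = \bx_{a,i} - \bmu_a$. At the mean-field level all pairwise distances collapse to $\|\bmu_a-\bmu_b\|^2$ and the certificate is explicit, with $\Delta > 2$ sufficient. The real task is to control the fluctuation of the certificate matrix under the law of the $\br_{a,i}$.

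The desired $\mathcal{O}(1/m)$ should then come from two sources. First, each coordinate of $\br_{a,i}$ has variance $\Theta(1/m)$ under any distribution supported on the unit ball, so the scalar projection $\langle \br_{a,i},\bw_{a,b}\rangle$ is $\Theta(1/\sqrt{m})$-subgaussian. Second, the pertinent quantities in the certificate are cluster-wise \emph{sums} of such projections against low-rank structure; using the sample-mean cancellation $\sum_i \br_{a,i}/n_a \to 0$ and second-order concentration should harvest an additional $1/\sqrt{m}$, giving an overall additive correction to $\Delta$ of size $\Theta(1/m)$. Because the off-diagonal block of $\BQ$ indexed by clusters $a,b$ depends only on $\{\br_{a,i}\}$ and $\{\br_{b,j}\}$ projected onto $\bw_{a,b}$, the number of clusters $k$ would enter only through a union bound over ${k \choose 2}$ block events, producing at most a $\sqrt{\log k}$ factor absorbed into the ``$N$ large enough'' hypothesis rather than into $\Delta$.

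The hard part -- and the reason the statement remains a conjecture -- is rigorously establishing that $\BQ$ can be chosen so that its positive-semidefiniteness truly \emph{decouples} across cluster pairs. In the proof of Theorem~\ref{thm:main}, $\BQ$ is controlled by a triangle inequality summing contributions from all $k-1$ partners of each cluster, and this is exactly where the $\sqrt{k}$ factor is born. A successful construction must instead exploit the near-orthogonality of the directions $\{\bw_{a,b}\}_{b\neq a}$ and the fact that cross-cluster contributions to $\BQ$ are mean-zero, so they concentrate sharply rather than accumulating. A comforting sanity check is Corollary~\ref{cor:lower-rbm}: for the uniform ball it forbids exact recovery once $\Delta < 1+\sqrt{1+2/(m+2)} = 2+\Theta(1/m)$, so matching necessary and sufficient bounds up to the constant in front of $1/m$ would simultaneously prove Conjecture~\ref{conj:rbm} and demonstrate its tightness.
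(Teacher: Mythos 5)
The statement you are addressing is stated in the paper as Conjecture~\ref{conj:rbm} precisely because the authors do not prove it: it is extracted from the empirical phase transitions in Figures~\ref{fig:sbm_dim2} and~\ref{fig:sbm_k2}, and the paper offers nothing beyond the much weaker sufficient bound $\Delta \geq 2 + \mathcal{O}(\sqrt{k/m})$ of Corollary~\ref{cor:rbm} and the necessary bound of Corollary~\ref{cor:lower-rbm}. There is therefore no proof in the paper to compare against, and your text is, by your own admission, a research program rather than a proof. The decisive gap is the one you flag yourself: you never construct the dual certificate. The entire content of the conjecture is to exhibit $\BQ \succeq \bzero$ whose positive semidefiniteness does not require summing operator-norm contributions over all $k-1$ partner clusters (the step in Lemma~\ref{lem:operator-bound} bounding $\|\BM_{\TB}-\BB_{\TB}\|$ by $2\sum_{l=1}^k\|\overline{\BX}_l\|^2$, which is where the $\sqrt{k}$ is born), and ``exploit near-orthogonality of the $\bw_{a,b}$ together with mean-zero cancellation'' is a hope, not an argument. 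Worse, near-orthogonality of the $k-1$ directions $\{\bw_{a,b}\}_{b\neq a}$ is impossible once $k > m+1$, which is exactly the regime of Figure~\ref{fig:sbm_dim2} ($m=2$, $k$ up to $6$) where the conjecture is empirically supported; so that mechanism cannot be the one driving the phenomenon.

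Two further substantive problems. First, your claim that each coordinate of $\br_{a,i}$ has variance $\Theta(1/m)$ ``under any distribution supported on the unit ball'' is false: support in the unit ball only gives $\Tr(\bSigma_a)\leq 1$, and $\|\bSigma_a\|$ can be as large as $1$ (mass concentrated on a segment, say). For such anisotropic instances the paper's own necessary condition, Corollary~\ref{cor:lower-rbm}, forbids exact recovery whenever $\Delta < 1+\sqrt{1+2\sigma_{\max}^2}$, which exceeds $2+\mathcal{O}(1/m)$ for large $m$; so the conjecture cannot hold for the generalized model at the level of uniformity your sketch assumes, and any proof must restrict to distributions with $\sigma_{\max}^2 = \mathcal{O}(1/m)$, as in the uniform-ball case that the experiments actually use. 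Second, the assertion that ``second-order concentration harvests an additional $1/\sqrt{m}$'' is not attached to any identified certificate quantity; the bottleneck terms in the paper's construction, such as the bound $\tau_{a,b}\leq 1+t$, are worst-case over individual points on the unit sphere and do not shrink with $m$ at all, so it is unclear that averaging is available where it is needed. Until a concrete certificate is written down and its positive semidefiniteness verified, the statement remains exactly what the paper calls it: a conjecture.
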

After the completion of this manuscript, a semidefinite relaxation based on graph cuts has been proposed in~\cite{LingS18} to overcome the performance limits of Peng-Wei relaxation, which provides a new alternative way to learn the stochastic ball models.


\section{Proofs for Section~\ref{sec:proximity_and_determ}} \label{sec:proof4det}

We will  prove the main theorem and related results under the proximity condition given in Proposition~\ref{prop:prox2}. The proof for the equivalence of the two proximity conditions is presented  at the end of this section. The key ingredient in the proof of the main theorem is to construct a dual variable to certify the optimality of the desired  solution $\BX=\sum_{a=1}^k\frac{1}{|\Gamma_a|}\bone_{\Gamma_a}\bone_{\Gamma_a}^\top$   based on the conic duality theorem in convex optimization~\cite{BenN01}.

\subsection{Conic duality}
We first rewrite~\eqref{eq:sdp} as a cone program in standard form which naturally leads to its dual formulation. 
Noting that $\BZ$ is a symmetric variable, the Peng-Wei relaxation of $k$-means~\eqref{eq:sdp} is equivalent to the following optimization problem:
\begin{align} \label{eq:prime}
\begin{split}
\min \quad &  \lag \BZ, \BD\rag \\
\mbox{s.t.} \quad &  \BZ \succeq \bzero, \quad \BZ \geq \bzero,  \quad \frac{1}{2} (\BZ + \BZ^\top) \bone_N = \bone_N,\quad \Tr(\BZ) = k.
\end{split}
\end{align}

Let ${\cal K} = {\cal S}^{N}_+ \cap \RR^{N\times N}_{+}$, the intersection of two self-dual cones: the positive semi-definite cone ${\cal S}^{N}_+$ and the nonnegative cone $\RR^{N\times N}_{+}$. By definition, it is a pointed\footnote{${\cal K}$ is pointed if for $\BZ\in {\cal K}$ and $-\BZ\in {\cal K}$, $\BZ$ must be $\bzero$, see Chapter 2 in~\cite{BenN01}.} and closed convex cone with a nonempty interior. 
Moreover, its dual cone\footnote{The dual cone of ${\cal K}$ is defined as $\{\BW : \lag \BW, \BZ\rag\geq 0, \forall \BZ\in {\cal K}\}$; in particular, there holds $({\cal K}^*)^* = {\cal K}.$} is  given by ${\cal K}^* = {\cal S}^{N}_+ + \RR^{N\times N}_{+} = \{\BB + \BQ: \BB\geq \bzero, \BQ\succeq \bzero\}$.
Let $\A$ be a linear map $\A$ from  ${\cal S}^N$ to $\RR^{N+1}$ defined as follows:
\begin{equation*}
\A(\BZ) :  \quad \BZ \rightarrow \begin{bmatrix}
\lag \BZ, \I_N\rag  \\
\frac{1}{2}(\BZ+\BZ^\top)\bone_N) 
\end{bmatrix}.
\end{equation*}
We can express \eqref{eq:prime} in the form of a standard cone program,
\begin{equation}\label{eq:standard}
\min \quad \lag \BZ, \BD\rag, \quad \mbox{s.t.} \quad \A(\BZ) = \begin{bmatrix} k \\ \bone_N  \end{bmatrix}, \quad \BZ \in {\cal K}.
\end{equation}
Thus, using the standard derivation in Lagrangian duality theory~\cite{BoydV04}, the dual problem of~\eqref{eq:prime} can be easily obtained and given by
\begin{equation}\label{eq:dual}
\max \quad -kz - \lag \balpha, \bone_N\rag, \quad \mbox{s.t.} \quad \BD + \A^*\left( \blambda\right) \in {\cal K}^*,
\end{equation}
where $\blambda = \begin{bmatrix} z \\ \balpha \end{bmatrix}\in\RR^{N+1}$ is the dual variable with respect to the affine constraints and 
\begin{equation}\label{def:Aadj}
\A^*(\blambda) := \frac{1}{2}(\balpha\bone_N^{\top} + \bone_N \balpha^{\top}) + z\I_N
\end{equation}
 is the adjoint operator of $\A$ under the canonical inner product over $\RR^{N\times N}$.

\subsection{Optimality condition}

This subsection presents a  necessary and sufficient condition for $\BX=\sum_{a=1}^k\frac{1}{|\Gamma_a|}\bone_{\Gamma_a}\bone_{\Gamma_a}^\top$ to be the global minimum of the Peng-Wei relaxation. The result is summarized in  Proposition~\ref{thm:opt_no_unq}, which follows from the complementary slackness in the conic duality theory. Moreover, a stronger sufficient condition has been established for the uniqueness of $\BX$ in Proposition~\ref{lem:optimal}.

\begin{theorem}[\bf Conic Duality Theorem, Theorem 2.4.1 in~\cite{BenN01}]\label{thm:conic} 
There hold:
\begin{enumerate}[1.]
\item If the primal problem is strictly feasible and bounded below, then the dual program is solvable\footnote{The primal problem or dual problem is solvable if it is feasible, bounded and the optimal value is attained.} and the optimal values of the primal/dual problems are equal to each other;
\item If the dual problem is strictly feasible and bounded above, then the primal program is solvable and the optimal values of the primal/dual problems are equal to each other;
\item Assume either the primal problem or the dual problem is bounded and strictly feasible. Then $(\BZ, \blambda)$ is a pair of primal/dual optimum if and only if either the duality gap is zero or the complementary slackness holds.
\end{enumerate}
\end{theorem}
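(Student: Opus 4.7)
The plan is to establish this classical conic-duality result via the standard separating-hyperplane technique from convex analysis. I would focus the main work on part 1 (primal strictly feasible and bounded below $\Rightarrow$ dual solvable with zero duality gap), since part 2 follows symmetrically by applying part 1 to the dual viewed as a primal cone program over $\mathcal{K}^* = \mathcal{S}^N_+ + \RR^{N \times N}_+$ (recalling that $(\mathcal{K}^*)^* = \mathcal{K}$), and part 3 is an immediate reformulation in terms of inner products once strong duality has been established.

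For part 1, let $p^*$ denote the finite primal optimal value. I would form the convex set
\begin{equation*}
C := \left\{ (\by, r) \in \RR^{N+1} \times \RR \, : \, \exists\, \BZ \in \mathcal{K} \text{ with } \A(\BZ) = \by \text{ and } \lag \BZ, \BD \rag \leq r \right\},
\end{equation*}
which by definition of $p^*$ contains no point of the form $(\by_0, r)$ with $r < p^*$, where $\by_0 := (k, \bone_N^\top)^\top$. Invoking the separating hyperplane theorem produces a nonzero pair $(\blambda, \mu) \in \RR^{N+1} \times \RR$ such that $\lag \blambda, \by \rag + \mu r \geq \lag \blambda, \by_0 \rag + \mu p^*$ for every $(\by, r) \in C$. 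Since $r$ can be enlarged freely, one has $\mu \geq 0$; homogeneity of $\mathcal{K}$ (scaling $\BZ \mapsto t \BZ$ with $t > 0$) then forces both $\A^*(\blambda) + \mu \BD \in \mathcal{K}^*$ and $\lag \blambda, \by_0 \rag + \mu p^* \leq 0$.

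The main obstacle is the nondegeneracy step of ruling out $\mu = 0$, and this is precisely where the strict-feasibility hypothesis enters. If $\mu = 0$, then $\A^*(\blambda) \in \mathcal{K}^*$; pairing with a strictly feasible primal point $\BZ_0 \in \interior(\mathcal{K})$ satisfying $\A(\BZ_0) = \by_0$, the inequality $\lag \A^*(\blambda), \BZ_0 \rag \geq \lag \blambda, \by_0 \rag$ becomes an equality, and since $\BZ_0$ is interior, small symmetric perturbations $\BZ_0 \pm \epsilon \BZ'$ remain in $\mathcal{K}$ for every symmetric direction $\BZ'$, forcing $\lag \A^*(\blambda), \BZ' \rag = 0$ and hence $\A^*(\blambda) = 0$; the explicit form \eqref{def:Aadj} then forces $\blambda = 0$, contradicting nontriviality of the separator. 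Once $\mu > 0$ is secured, rescaling so that $\mu = 1$ identifies $\blambda = (z, \balpha^\top)^\top$ as a dual-feasible point with objective value $-kz - \lag \balpha, \bone_N \rag \geq p^*$; combined with the weak-duality inequality $\lag \BZ, \BD + \A^*(\blambda) \rag \geq 0$ (which is immediate from $\BZ \in \mathcal{K}$ and $\BD + \A^*(\blambda) \in \mathcal{K}^*$ by the definition of the dual cone), this yields attainment of the dual optimum together with zero duality gap. Part 3 then follows at once: for any primal-dual feasible pair, the objective gap equals $\lag \BZ, \BD + \A^*(\blambda) \rag$, which is nonnegative and vanishes exactly when complementary slackness holds.
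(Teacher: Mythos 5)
The paper does not actually prove this statement: it is quoted verbatim, with citation, as Theorem 2.4.1 of Ben-Tal and Nemirovski and is used as a black box, so there is no in-paper proof to compare against. Your separating-hyperplane argument is the standard textbook proof of this classical result and is sound as written: the set $C$ is convex and disjoint from $\{(\by_0,r): r<p^*\}$, the freedom to enlarge $r$ gives $\mu\geq 0$, homogeneity of $\mathcal{K}$ (including $\BZ=\bzero$) yields both $\A^*(\blambda)+\mu\BD\in\mathcal{K}^*$ and $\lag\blambda,\by_0\rag+\mu p^*\leq 0$, the strictly feasible interior point correctly rules out $\mu=0$, and weak duality $\lag\BZ,\BD+\A^*(\blambda)\rag\geq 0$ closes the argument and simultaneously delivers part 3. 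The only elided steps are routine: the injectivity of $\A^*$ needed to pass from $\A^*(\blambda)=\bzero$ to $\blambda=\bzero$ (which holds here for $N\geq 2$, since the off-diagonal entries $\tfrac12(\alpha_i+\alpha_j)=0$ force $\balpha=\bzero$ and the diagonal then forces $z=0$), and the rewriting of \eqref{eq:dual} as a standard conic program over $\mathcal{K}^*$ so that part 1 can be invoked, via $(\mathcal{K}^*)^*=\mathcal{K}$, to obtain part 2.
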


The following lemma, tailored to~\eqref{eq:prime} and~\eqref{eq:dual}, simply follows from the strict feasibility of~\eqref{eq:prime} or~\eqref{eq:dual} and Theorem~\ref{thm:conic}. 
\begin{lemma} \label{lem:slater}
Both primal/dual problems~\eqref{eq:prime} and~\eqref{eq:dual} are strictly feasible and bounded below/above. Therefore, they are  are solvable (so the optimal values are attained). Moreover, $(\BX, \blambda)$ is a pair of primal/dual optima if and only  if the complementary slackness holds: $\lag \BD + \A^*(\blambda), \BX\rag = 0$ where $\BD +\A^*(\blambda)\in{\cal K}^*$.
\end{lemma}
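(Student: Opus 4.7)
The plan is to invoke the Conic Duality Theorem (Theorem~\ref{thm:conic}) applied to the standard-form cone program~\eqref{eq:standard} and its dual~\eqref{eq:dual}. Its three parts together deliver exactly what the lemma claims: solvability of both problems, zero duality gap, and the complementary slackness characterization of primal-dual optima. Thus the work is to verify two hypotheses, namely strict feasibility of both programs (producing points in $\mathrm{int}(\mathcal{K})$ and $\mathrm{int}(\mathcal{K}^*)$ that meet the affine constraints) and boundedness of each objective.

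For strict feasibility of the primal, I would try the ansatz $\BZ = \alpha \I_N + \beta \bone_N \bone_N^{\top}$. The two affine constraints $\Tr(\BZ) = k$ and $\BZ \bone_N = \bone_N$ reduce to the $2 \times 2$ linear system $N\alpha + N\beta = k$ and $\alpha + N\beta = 1$, whose unique solution $\alpha = (k-1)/(N-1)$ and $\beta = (N-k)/(N(N-1))$ is strictly positive whenever $1 < k < N$. The resulting matrix is strictly positive entrywise and, as the sum of $\alpha \I_N \succ 0$ and $\beta \bone_N \bone_N^{\top} \succeq 0$, is strictly positive definite, so it lies in $\mathrm{int}(\mathcal{K})$. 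For strict feasibility of the dual, take $\balpha = M \bone_N$ with $M > 0$ and $z > 0$; a direct calculation from~\eqref{def:Aadj} gives
\begin{equation*}
\BD + \A^*(\blambda) \;=\; \bigl(\BD + M \bone_N \bone_N^{\top}\bigr) \;+\; z \I_N,
\end{equation*}
where the first summand is strictly entrywise positive and the second is strictly positive definite. Combined with the standard identity $\mathrm{int}(A+B) \supseteq \mathrm{int}(A) + B$ for convex cones with nonempty interior, this places $\BD + \A^*(\blambda)$ in $\mathrm{int}(\mathcal{K}^*)$.

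Boundedness is immediate: $\BD \geq 0$ entrywise and every primal-feasible $\BZ$ is entrywise nonnegative, so $\lag \BZ, \BD \rag \geq 0$; weak duality then bounds the dual objective above by the primal optimum. With both hypotheses verified, parts~1 and~2 of Theorem~\ref{thm:conic} yield solvability of both programs and equality of optimal values, while part~3 yields the complementary slackness characterization of primal-dual optima. The identity that translates complementary slackness into the form stated in the lemma is the adjoint relation $\lag \A^*(\blambda), \BZ \rag = \lag \blambda, \A(\BZ) \rag = kz + \lag \balpha, \bone_N \rag$, valid for every primal-feasible $\BZ$; this shows that the duality gap of a feasible pair equals precisely $\lag \BD + \A^*(\blambda), \BZ \rag$, which is $\geq 0$ since $\BD + \A^*(\blambda) \in \mathcal{K}^*$ and $\BZ \in \mathcal{K}$, and vanishes exactly when $\lag \BD + \A^*(\blambda), \BX \rag = 0$. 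The only subtle step is certifying that the dual certificate lies in the \emph{interior} (not merely in) the sum cone $\mathcal{K}^* = \mathcal{S}^N_+ + \RR^{N\times N}_+$; this is the reason for invoking the interior-of-Minkowski-sum identity, and everything else reduces to elementary substitutions.
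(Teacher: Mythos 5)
Your proposal is correct and follows essentially the same route as the paper: the strictly feasible primal point you construct is exactly the paper's $\widetilde{\BZ} = \frac{1-\lambda}{N}\bone_N\bone_N^\top + \lambda\I_N$ with $\lambda = \frac{k-1}{N-1}$, your dual certificate (entrywise-positive matrix plus positive definite matrix) is the same decomposition the paper uses with $\balpha=\bzero$ and $z$ large, and both arguments conclude via nonnegativity of the objective, weak duality, and Theorem~\ref{thm:conic}. The only cosmetic difference is your parameterization $\balpha = M\bone_N$ of the dual point and your explicit appeal to the interior-of-Minkowski-sum inclusion, which the paper leaves implicit.
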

\begin{proof}
Consider $\widetilde{\BZ} = \frac{1-\lambda}{N} \bone_N \bone_N^\top + \lambda \I_N$, where $\lambda = \frac{k-1}{N-1} > 0$ for $k \geq 2$. Note that $\widetilde{\BZ} \succeq \lambda \I_N \succ \bzero$ and $\widetilde{\BZ} \geq \frac{1-\lambda}{N} \bone_N \bone_N^\top > \bzero$. So $\widetilde{\BZ}$ is in the interior of ${\cal K}$. It is also easy to verify that $\widetilde{\BZ}$ satisfies the other two equality constraints. This shows \eqref{eq:prime} is strictly feasible. 
In addition, we can see that the objective function in~\eqref{eq:prime} is also nonnegative since both $\BZ$ and $\BD$ are entrywise nonnegative.
In conclusion, the primal problem is strictly feasible and bounded below by $0$. 

Note that $\BJ_{N\times N} = \bone_N \bone_N^\top$ is a strictly positive symmetric matrix. For the dual problem~\eqref{eq:dual}, we can take  $\balpha = \bzero$ and  let $z$ be a sufficiently large positive number such that
\begin{equation*}
\BD + \A^*(\blambda) = \underbrace{\BJ_{N\times N}}_{\text{a positive matrix}} + \underbrace{\left( \BD + z\I_N - \BJ_{N\times N} \right)}_{\text{a positive definite matrix}}
\end{equation*} 
is in the interior of ${\cal K}^*$. Hence, the dual program is also strictly feasible. Its optimal value is bounded above because it is always smaller than the optimal value of the primal problem. 
 
Therefore, the application of Theorem~\ref{thm:conic} implies that $(\BX, \blambda)$ is a pair of primal/dual optima if and only if the complementary slackness holds, i.e., $\lag \BD + \A^*(\blambda), \BX\rag = 0$ where $\BD + \A^*(\blambda)\in{\cal K}^*$ and $\BX\in{\cal K}.$
 \end{proof}
\begin{remark}
The complementary slackness is indeed equivalent to the zero duality gap since  the optimal values of both problems are attained and there holds
\begin{equation*}
\lag\BD, \BX\rag = -\lag \A^*(\blambda), \BX\rag= -\lag \blambda, \A(\BX)\rag = -\left\lag \blambda, \begin{bmatrix}k \\\bone_N\end{bmatrix}\right\rag = -kz - \lag \balpha, \bone_N\rag.
\end{equation*}
\end{remark}

In the following lemma, we will derive a more explicit expression for complementary slackness which will be used in the analysis later.
By definition of ${\cal K}^*$, the matrix $\BD + \A^*(\blambda)$ must be in the form of
\begin{equation}\label{def:BQ}
\BD + \A^*(\blambda) = \BB + \BQ,
\end{equation}
where $\BB \geq \bzero$, $\BQ\succeq 0$ and both of them are symmetric.

\begin{lemma}\label{lem:comp_slack}
The complementary slackness $\lag \BD + \A^*(\blambda), \BX\rag = 0$ is equivalent to
\begin{equation}\label{eq:BQdual}
\BB^{(a,a)}= \bzero~\mbox{~for all~}1\leq a\leq k, \quad\mbox{and}\quad \BQ\BX = \BX\BQ =\bzero,
\end{equation}
where $\BB\geq \bzero$ and $\BQ\succeq \bzero$ obeys \eqref{def:BQ} for some $\blambda$.
It follows immediately that $\BQ^{(a,b)}\bone_{n_b} = \bzero$ for $1\leq a, b\leq k.$ 
Moreover,~\eqref{eq:BQdual} implies that the dual variable $\blambda=\begin{bmatrix} z\\ \balpha \end{bmatrix}$ satisfies
\begin{equation}\label{eq:alphaa}
\balpha_{a} = -\frac{2}{n_a}  \BD^{(a,a)} \bone_{n_a}  + \frac{1}{n_a^2} \lag \BD^{(a,a)}, \BJ_{n_a \times n_a} \rag \bone_{n_a}  - \frac{z}{n_a} \bone_{n_a},
\end{equation}
where $\balpha_a$ is the $a$-th block of $\balpha$ given by $\{ \alpha_i \}_{i \in \Gamma_a}$.
\end{lemma}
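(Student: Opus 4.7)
The plan is to decouple the complementary slackness into two separate vanishing conditions and then translate each one into an algebraic identity.  Writing $\BD + \A^*(\blambda) = \BB + \BQ$ with $\BB\geq\bzero$ and $\BQ\succeq\bzero$, and using that $\BX$ is both entrywise nonnegative and positive semidefinite, I would first observe that $\langle\BB,\BX\rangle\geq 0$ and $\langle\BQ,\BX\rangle\geq 0$ hold separately.  Hence the complementary slackness $\langle\BD+\A^*(\blambda),\BX\rangle=0$ is equivalent to the simultaneous conditions $\langle\BB,\BX\rangle=0$ and $\langle\BQ,\BX\rangle=0$.

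For the entrywise part, I would use the structure $\BX=\sum_{a}\frac{1}{n_a}\bone_{\Gamma_a}\bone_{\Gamma_a}^\top$, so the only nonzero entries of $\BX$ sit in the diagonal blocks $(a,a)$, where they are strictly positive.  Thus
\[
\langle\BB,\BX\rangle=\sum_{a=1}^k\frac{1}{n_a}\bone_{n_a}^\top \BB^{(a,a)}\bone_{n_a}=0
\]
forces $\BB^{(a,a)}=\bzero$ for each $a$ (since $\BB^{(a,a)}\geq\bzero$).  For the PSD part, I would invoke the standard fact that if $\BQ,\BX\succeq\bzero$ and $\tr(\BQ\BX)=0$, then $\BQ\BX=\BX\BQ=\bzero$: write $\tr(\BQ\BX)=\tr(\BX^{1/2}\BQ\BX^{1/2})=0$, conclude $\BQ^{1/2}\BX^{1/2}=\bzero$, hence $\BQ\BX=\bzero$.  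Since the column space of $\BX$ is $\operatorname{span}\{\bone_{\Gamma_a}\}_{a=1}^k$, the identity $\BQ\BX=\bzero$ means $\BQ\bone_{\Gamma_a}=\bzero$ for every $a$, which block-wise reads $\BQ^{(a,b)}\bone_{n_b}=\bzero$ for all $a,b$.

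For the formula for $\balpha_a$, I would restrict the equation $\BD+\A^*(\blambda)=\BB+\BQ$ to the diagonal block $(a,a)$ and use $\BB^{(a,a)}=\bzero$:
\[
\BD^{(a,a)}+\tfrac{1}{2}(\balpha_a\bone_{n_a}^\top+\bone_{n_a}\balpha_a^\top)+z\I_{n_a}=\BQ^{(a,a)}.
\]
Applying $\bone_{n_a}$ on the right and using $\BQ^{(a,a)}\bone_{n_a}=\bzero$ yields
\[
\BD^{(a,a)}\bone_{n_a}+\tfrac{n_a}{2}\balpha_a+\tfrac{1}{2}(\bone_{n_a}^\top\balpha_a)\bone_{n_a}+z\bone_{n_a}=\bzero.
\]
Letting $s_a:=\bone_{n_a}^\top\balpha_a$ and taking inner product with $\bone_{n_a}$ gives $s_a=-\tfrac{1}{n_a}\langle\BD^{(a,a)},\BJ_{n_a\times n_a}\rangle-z$; substituting this back and solving for $\balpha_a$ produces exactly \eqref{eq:alphaa}.

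The only delicate step is the PSD complementary slackness $\langle\BQ,\BX\rangle=0\Rightarrow\BQ\BX=\bzero$; everything else is block-matrix bookkeeping.  All intermediate manipulations are elementary once that equivalence is in hand, and the diagonal-block reduction using $\BB^{(a,a)}=\bzero$ and $\BQ^{(a,a)}\bone_{n_a}=\bzero$ is what uniquely pins down the dual vector $\balpha_a$ in terms of $\BD^{(a,a)}$ and $z$.
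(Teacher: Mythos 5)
Your proposal is correct and follows essentially the same route as the paper: split $\langle\BB+\BQ,\BX\rangle=0$ into the two nonnegative pieces, read off $\BB^{(a,a)}=\bzero$ from the support of $\BX$, use the standard PSD trace argument to get $\BQ\BX=\BX\BQ=\bzero$, and then solve the diagonal-block equation for $\balpha_a$ by pairing with $\bone_{n_a}$. The algebra for $s_a$ and the back-substitution reproduces \eqref{eq:alphaa} exactly as in the paper's proof.
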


\begin{proof}
It suffices to prove~\eqref{eq:BQdual} from $\lag \BD +\A^*(\blambda), \BX\rag = 0$ since the other direction is trivial. Note that the complementary slackness is equivalent to $\lag \BB +\BQ, \BX\rag = 0$ for some $\BB\geq \bzero$ and $\BQ\succeq \bzero$. Since $\BX\geq \bzero$ and $\BX\succeq 0$, it follows that $\lag \BB, \BX\rag = \lag \BQ, \BX\rag = 0.$ From $\lag \BB, \BX\rag = 0$ and $\BB\geq \bzero$, we have
\begin{equation*}
\lag \BB^{(a,a)}, \BJ_{n_a\times n_a}\rag = 0 \Longleftrightarrow \BB^{(a,a)} = \bzero
\end{equation*} 
where $\BX^{(a,a)} = \BJ_{n_a\times n_a}.$
Since both $\BX$ and $\BQ$ are positive semi-definite matrices, we have 
\begin{equation*}
0 = \lag \BX, \BQ\rag = \Tr(\BX\BQ) = \| \BX^{1/2}\BQ^{1/2}\|_F^2,
\end{equation*}
which gives $\BQ^{1/2}\BX^{1/2}  = \BX^{1/2}\BQ^{1/2}  = \bzero$ and in turn implies $\BQ\BX = \BX\BQ = \bzero$. 

Now we proceed to derive~\eqref{eq:alphaa}.
Following from $\BQ^{(a,a)} \bone_{n_a} = \bzero$ and $\BB^{(a,a)} = \bzero$, we obtain
\begin{align*}
\begin{cases}
\BQ^{(a,a)} \bone_{n_a} = \BD^{(a,a)}\bone_{n_a} + \frac{1}{2}( n_a \balpha_a +  \balpha_a^{\top}\bone_{n_a} \bone_{n_a}) + z\bone_{n_a} = \bzero,\\
\bone_{n_a}^\top \BQ^{(a,a)} \bone_{n_a} = \bone_{n_a}^\top \BD^{(a,a)} \bone_{n_a} + n_a \balpha_a^{\top}\bone_{n_a} + n_a z= \bzero,
\end{cases}
\end{align*}
where $\BQ = \BD + \frac{1}{2}(\balpha\bone_N^{\top} +\bone_N\balpha^{\top}) + z\I_N -\BB$ follows from $\BB+\BQ=\BD + \A^*(\blambda)$ and the definition of $\A^*$, see \eqref{def:BQ} and \eqref{def:Aadj}.
From the second equation above, we get $\balpha_a^{\top}\bone_{n_a} = -\frac{1}{n_a} \bone_{n_a}^\top \BD^{(a,a)} \bone_{n_a} - z$. Substituting it into the first one gives
\begin{equation*}
\balpha_a = \frac{1}{n_a}\left(-2\BD^{(a,a)}\bone_{n_a} - \balpha_a^{\top}\bone_{n_a}\bone_{n_a} - 2z\bone_{n_a}\right) = \frac{1}{n_a}\left(-2\BD^{(a,a)}\bone_{n_a} +  \frac{1}{n_a} \bone_{n_a} \bone_{n_a}^\top \BD^{(a,a)} \bone_{n_a}  - z\bone_{n_a}\right),
\end{equation*}
which completes the proof.
 \end{proof}

Because of~\eqref{eq:alphaa}, the effective dual variables are only $z$ and $\BB^{(a,b)}$ with $a \neq b$ since $\balpha$ can be fully represented by a function of $z$ if the complementary slackness holds, and
plugging $\balpha$ back into the expression of $\BQ$ in~\eqref{def:BQ} gives
\begin{equation} \label{eq:Q}
\BQ = z (\I_N-\BE) + \BM - \BB,
\end{equation}
where 
\begin{align} \label{def:M} 
\begin{cases}
\BE^{(a,b)} & = \frac{1}{2}\left( \frac{1}{n_a} + \frac{1}{n_b}\right) \BJ_{n_a \times n_b}, \\
\BM^{(a,b)} & = \BD^{(a, b)} - \left( \frac{1}{n_a} \BD^{(a, a)}\BJ_{n_a\times n_b}  + \frac{1}{n_b} \BJ_{n_a\times n_b} \BD^{(b, b)}\right) \\
  &  \qquad + \frac{1}{2} \left( \frac{1}{n_a^2}\langle \BD^{(a, a)}, \BJ_{n_a\times n_a} \rangle + \frac{1}{n_b^2}\langle \BD^{(b, b)}, \BJ_{n_b\times n_b} \rangle \right) \BJ_{n_a\times n_b}.
\end{cases}
\end{align}
In particular, if $a = b$, 
\begin{align}\label{def:M_special} 
\begin{cases}
\BE^{(a,a)} & = \frac{1}{n_a} \BJ_{n_a \times n_a},\\
\BM^{(a,a)} & = \left(\I_{n_a} - \frac{1}{n_a} \BJ_{n_a\times n_a} \right)\BD^{(a,a)} \left(\I_{n_a} - \frac{1}{n_a} \BJ_{n_a\times n_a} \right).
\end{cases}
\end{align}

On the other hand, if $\BB\geq 0$, $\BB^{(a,a)}= \bzero~\mbox{~for all~}1\leq a\leq k$, and $\BQ\succeq 0$ has the form of \eqref{eq:Q}, then one can easily verify that  $\BQ\BX = 0$ since $\lag\BQ,\BX\rag = 0$, and $\BB+\BQ=\BD+\A^*(\blambda)$ for $z$ in \eqref{eq:Q} and $\balpha$ in \eqref{eq:alphaa}. Therefore, Lemma~\ref{lem:comp_slack} implies that $\BX$ is a global minimizer of \eqref{eq:prime}.

In summary, we have established a necessary and sufficient condition for $\BX$ to be a global minimizer of the Peng-Wei relaxation of $k$-means.

\begin{proposition}[\bf Optimality condition] \label{thm:opt_no_unq}
Any feasible pair of $\BQ \succeq \bzero$ and $\BB \geq \bzero$ where $\BQ$ has the form of \eqref{eq:Q} and $\BB^{(a,a)} = \bzero$ for all $1 \leq a \leq k$, certifies $\BX$ to be a global minimum of \eqref{eq:prime}. Conversely, if $\BX$ is a global minimum of \eqref{eq:prime}, then such a pair of $(\BQ, \BB)$ $(\text{or }(z, \BB))$ must exist.
\end{proposition}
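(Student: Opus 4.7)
The plan is to prove this proposition by directly assembling Lemma~\ref{lem:slater}, Lemma~\ref{lem:comp_slack}, and part 3 of Theorem~\ref{thm:conic}. The content of the statement is essentially that the canonical decomposition of $\BD + \A^*(\blambda)$ into a nonnegative part and a positive semidefinite part, forced by complementary slackness, has precisely the structure described in the hypotheses.

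For sufficiency, I would take a feasible pair $(\BQ,\BB)$ meeting the hypotheses. The scalar $z$ appearing in \eqref{eq:Q}, together with $\balpha$ determined by \eqref{eq:alphaa}, defines a dual variable $\blambda$. A direct expansion via \eqref{def:Aadj}, \eqref{def:M}, and \eqref{def:M_special} shows $\BB + \BQ = \BD + \A^*(\blambda)$, so $\BD + \A^*(\blambda)\in\mathcal{K}^*$, giving dual feasibility. Splitting $\lag \BD+\A^*(\blambda),\BX\rag = \lag\BB,\BX\rag + \lag\BQ,\BX\rag$, the first inner product vanishes because $\BX^{(a,b)}=\bzero$ for $a\neq b$ and $\BB^{(a,a)}=\bzero$, while the second reduces to $\sum_a\frac{1}{n_a}\bone_{n_a}^\top\BQ^{(a,a)}\bone_{n_a}$. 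Each summand is zero since $\bone_{n_a}$ is annihilated by both $\I_{n_a}-\BE^{(a,a)}=\I_{n_a}-\frac{1}{n_a}\BJ_{n_a\times n_a}$ and by the centering projector embedded in $\BM^{(a,a)}$ in \eqref{def:M_special}, while $\BB^{(a,a)}=\bzero$. Complementary slackness then holds, and Lemma~\ref{lem:slater} in combination with Theorem~\ref{thm:conic} certifies $\BX$ as a primal optimum.

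For necessity, suppose $\BX$ is a global minimum of \eqref{eq:prime}. By Lemma~\ref{lem:slater}, the dual is strictly feasible and solvable, and any dual optimum $\blambda$ satisfies $\BD + \A^*(\blambda)\in\mathcal{K}^*$ together with the complementary slackness condition $\lag \BD+\A^*(\blambda),\BX\rag = 0$. By the definition of $\mathcal{K}^*$, decompose $\BD + \A^*(\blambda) = \BB + \BQ$ with $\BB\geq\bzero$ and $\BQ\succeq\bzero$. Nonnegativity of both terms in $\lag\BB,\BX\rag+\lag\BQ,\BX\rag = 0$ forces each to vanish; since $\BX^{(a,a)} = \frac{1}{n_a}\BJ_{n_a\times n_a}$ has strictly positive entries, $\lag\BB,\BX\rag=0$ forces $\BB^{(a,a)}=\bzero$ for every $a$. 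The same argument used in Lemma~\ref{lem:comp_slack} then yields $\BQ\BX=\bzero$ and hence $\BQ^{(a,a)}\bone_{n_a}=\bzero$; this is precisely the equation that pins down $\balpha$ to the formula \eqref{eq:alphaa}, and substituting back into $\BQ=\BD+\A^*(\blambda)-\BB$ produces the representation \eqref{eq:Q}.

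No step is genuinely hard; the main obstacle is purely notational, namely the bookkeeping that passes between $\BQ^{(a,a)}\bone_{n_a}=\bzero$ and the explicit formulas \eqref{eq:alphaa} and \eqref{eq:Q}. Since this linear algebra is carried out verbatim in the derivation immediately preceding the proposition, I would invoke it rather than reproduce it, keeping the proof essentially a one-paragraph synthesis of the two preceding lemmas.
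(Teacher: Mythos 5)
Your proposal is correct and follows essentially the same route as the paper: the paper's own justification of this proposition is precisely the synthesis of Lemma~\ref{lem:slater}, Lemma~\ref{lem:comp_slack}, and the explicit computation of $\balpha$ in \eqref{eq:alphaa} leading to the representation \eqref{eq:Q}, in both directions. The only detail worth retaining explicitly in a write-up is the verification that $\BQ^{(a,a)}\bone_{n_a}=\bzero$ (hence $\lag\BQ,\BX\rag=0$) in the sufficiency direction, which you do correctly.
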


The optimality condition we have established is essentially equivalent to that of \cite{iguchi2015tightness}. However, we use conic duality theory in \cite{BenN01} to show the strong duality holds, and both primal/dual solutions exist for Peng-Wei relaxation by constructing a Slater's constraint qualification. This lays the foundation to derive the necessary condition for the tightness of Peng-Wei relaxation, which is not fully addressed in \cite{iguchi2015tightness}.

In other words, the optimality condition in Proposition~\ref{thm:opt_no_unq} is not strong enough to guarantee that $\BX$ is a unique solution to \eqref{eq:prime}. The following proposition provides a sufficient condition for the uniqueness of $\BX$ by imposing a stricter condition on $\BB$.

\begin{proposition}[\bf A sufficient condition for the uniqueness of global minimum] \label{lem:optimal}

Any feasible pair of $\BQ \succeq \bzero$ and $\BB \geq \bzero$, where $\BQ$ has the form of \eqref{eq:Q}, $\BB^{(a,a)} = \bzero$ for all $1\leq a \leq k$, and $\BB^{(a,b)} > \bzero$ for all $a \neq b$, certifies $\BX$ to be a unique global minimum of \eqref{eq:prime}.
\end{proposition}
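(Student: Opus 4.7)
The plan is to leverage complementary slackness from Lemma~\ref{lem:slater}. Suppose $\widetilde{\BX}$ is any optimal primal solution; by strong duality, $\lag\BB+\BQ,\widetilde{\BX}\rag=0$. Since $\BB,\widetilde{\BX}\geq\bzero$ entrywise while $\BQ,\widetilde{\BX}\succeq\bzero$, the two inner products $\lag\BB,\widetilde{\BX}\rag$ and $\lag\BQ,\widetilde{\BX}\rag$ are individually nonnegative and must therefore vanish separately. This decomposition of the slackness condition into an entrywise part and a spectral part is the engine of the argument.

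The first piece is where the strict positivity hypothesis enters. From $\lag\BB,\widetilde{\BX}\rag=\sum_{i,j}B_{ij}\widetilde{X}_{ij}=0$ together with $\BB^{(a,b)}>\bzero$ for $a\neq b$, every entry of $\widetilde{\BX}^{(a,b)}$ must be zero, so $\widetilde{\BX}$ is forced to be block-diagonal in the cluster ordering. This is precisely the extra rigidity that the weaker hypothesis $\BB^{(a,b)}\geq\bzero$ of Proposition~\ref{thm:opt_no_unq} fails to secure.

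Once block diagonality is in hand, the remaining primal constraints alone should pin down each diagonal block with no further use of $\BQ$. Block diagonality combined with $\widetilde{\BX}\succeq\bzero$ gives $\widetilde{\BX}^{(a,a)}\succeq\bzero$, while $\widetilde{\BX}\bone_N=\bone_N$ reduces to $\widetilde{\BX}^{(a,a)}\bone_{n_a}=\bone_{n_a}$, so $\bone_{n_a}$ is an eigenvector of $\widetilde{\BX}^{(a,a)}$ with eigenvalue $1$. Being PSD, $\Tr(\widetilde{\BX}^{(a,a)})\geq 1$ for every $a$, but $\sum_a \Tr(\widetilde{\BX}^{(a,a)})=\Tr(\widetilde{\BX})=k$ forces equality throughout. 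Hence every other eigenvalue of $\widetilde{\BX}^{(a,a)}$ vanishes, giving $\widetilde{\BX}^{(a,a)}=\frac{1}{n_a}\bone_{n_a}\bone_{n_a}^\top=\BX^{(a,a)}$, and so $\widetilde{\BX}=\BX$.

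The potential pitfall I anticipate is the temptation to perform a spectral analysis of $\BQ^{(a,a)}$ to argue that its kernel is spanned by $\bone_{n_a}$, which would in general require a positive lower bound on the spectrum of $\BQ^{(a,a)}$ restricted to $\bone_{n_a}^{\perp}$. The outline above bypasses this entirely: the trace normalization $\Tr(\BZ)=k$ together with the constraint $\widetilde{\BX}\bone_N=\bone_N$ does the uniqueness work for free, so in the proof $\BQ$ is needed only to certify optimality via Proposition~\ref{thm:opt_no_unq}, while the strict positivity of $\BB$ off the diagonal carries all the uniqueness information.
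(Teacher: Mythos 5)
Your proposal is correct and follows essentially the same route as the paper: the paper likewise reduces uniqueness to showing $\lag\BB,\widetilde{\BX}\rag>0$ for any feasible $\widetilde{\BX}\neq\BX$ (via the identity $\lag\BD,\widetilde{\BX}-\BX\rag=\lag\BQ,\widetilde{\BX}\rag+\lag\BB,\widetilde{\BX}\rag$, which is just the complementary-slackness decomposition you use), and then runs the identical endgame — strict positivity of the off-diagonal blocks of $\BB$ forces $\widetilde{\BX}$ to be block diagonal, after which the row-sum and trace constraints pin each block down to $\frac{1}{n_a}\bone_{n_a}\bone_{n_a}^\top$. The only cosmetic difference is that the paper argues strict improvement for arbitrary feasible competitors rather than invoking complementary slackness for a hypothetical second optimum; your observation that no spectral analysis of $\BQ^{(a,a)}$ is needed matches the paper exactly.
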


\begin{proof}
Proposition~\ref{thm:opt_no_unq} implies $\BX$ is a global minimum of \eqref{eq:prime}.
Let $\widetilde{\BX}\in\RR^{N\times N}$ be an arbitrary feasible solution satisfying $\widetilde{\BX}\bone_N = \bone_N$, $\Tr(\widetilde{\BX}) = k$, $\widetilde{\BX}\succeq 0$ and $\widetilde{\BX}\geq 0$. 
We will prove $\BX$ is a unique solution by showing that if $\widetilde{\BX} \neq \BX$, there holds
\begin{equation*}
\lag \BD, \widetilde{\BX} - \BX\rag > 0.
\end{equation*}

We start with $\lag \BQ, \widetilde{\BX}-\BX\rag$. Since $\BQ\succeq 0$, $\widetilde{\BX}\succeq 0$, and $\lag \BQ, \BX\rag = 0$, it follows that
\begin{equation*}
\lag \BQ, \widetilde{\BX}- \BX\rag = \lag \BQ, \widetilde{\BX}\rag \geq 0. 
\end{equation*}
By the definition of $\BQ$, and the fact $\widetilde{\BX}\bone_N = \BX\bone_N= \bone_N$ and $\Tr(\widetilde{\BX}) = \Tr(\BX) = k$, there holds,
\begin{equation*}
\lag \BQ, \widetilde{\BX}- \BX\rag = \lag \BD, \widetilde{\BX}- \BX\rag - \lag \BB, \widetilde{\BX}- \BX\rag \geq 0.
\end{equation*}
Since the supports of $\BB$ and $\BX$ are disjoint, one has $\lag \BB, \BX\rag = 0.$
Therefore, in order to show $\lag \BD, \widetilde{\BX} - \BX\rag > 0$, it suffices to prove that $\lag\BB, \widetilde{\BX} \rag > 0$, which will be done by contradiction.

Suppose  $\lag \BB, \widetilde{\BX}\rag = \sum_{a\neq b}\lag \BB^{(a,b)}, \widetilde{\BX}^{(a,b)}\rag = 0$. Then we have $\widetilde{\BX}^{(a,b)} = 0$ which follows from $\BB^{(a,b)} > 0$ for all $a\neq b$ and $\widetilde{\BX}\geq 0.$ Therefore, the support of $\widetilde{\BX}$ must be the same as that of $\BX$.
Note that $\widetilde{\BX}$ is a positive semi-definite  
matrix which satisfies $\widetilde{\BX} \bone_N = \bone_N$ and $\Tr(\widetilde{\BX}) = k$.  So for any $1 \leq a \leq k$, $\widetilde{\BX}^{(a,a)} \bone_{n_a} = \bone_{n_a}$. This means that $1$ is an eigenvalue of $\widetilde{\BX}$ with multiplicity at least $k$. Since all the eigenvalues of $\widetilde{\BX}$ are nonnegative and their sum is equal to $\Tr(\widetilde{\BX}) = k$, $\widetilde{\BX}$ has only $k$ nonzero eigenvalues and all of them are 1. Thus, each $\widetilde{\BX}^{(a,a)}$ is a rank one matrix. It follow that  $\widetilde{\BX}^{(a,a)} = \frac{1}{n_a} \bone_{n_a} \bone_{n_a}^{\top} = \BX^{(a,a)}$ since $\widetilde{\BX}^{(a,a)} \bone_{n_a} = \bone_{n_a}$ and $\widetilde{\BX}^{(a,a)}$ is symmetric. This contradicts the assumption $\widetilde{\BX} \neq \BX.$

 \end{proof}

\subsection{Sufficient condition for dual certificate} \label{sec:suff_dual_cert}
We will further reduce the sufficient condition in Proposition~\ref{lem:optimal} to one that will be used in the construction of the dual certificate. 
As suggested by that proposition, we need to find a number $z \in \RR$ and a symmetric  matrix $\BB \in \RR^{N \times N}_+$ such that the following sufficient condition holds:
\begin{equation}\label{eq:optimal}
\BQ\succeq \bzero,  \quad \BB^{(a,b)} > \bzero, \quad \BB^{(a,a)} = \bzero \quad \forall a\neq b,
\end{equation}
where $\BQ$  is given in \eqref{eq:Q}. As a result $\BQ$, satisfies $\BQ \BX =\BX\BQ= \bzero$ automatically.

In order to present our final sufficient optimality condition, we first introduce two linear subspaces. 
Note that $\BX$ is clearly a projection matrix satisfying $\BX^2 = \BX$. Let $T$ and $\TB$ be two linear subspaces in $\RR^{N\times N}$ defined as
\begin{align*}
T &  = \{ \BX \BY + \BY\BX - \BX\BY\BX: \BY\in\RR^{N\times N} \}, \\
\TB &  =  \{ (\I_N - \BX) \BY (\I_N - \BX): \BY\in \RR^{N\times N} \}.
\end{align*}
Denote by $\PP_T: \RR^{N \times N} \rightarrow T$ and $\PP_{\TB}: \RR^{N \times N} \rightarrow \TB$ the corresponding projection operators. We use subscripts to denote projections, for example letting $\PP_T(\BB)=\BB_T $ and $\PP_{T^\perp}(\BB)=\BB_{T^\perp} $. 
For any $\BZ\in\RR^{N\times N}$, it can be easily verified that the $(a,b)$-th block of $\BZ_T$ and $\BZ_{\TB}$ are
\begin{align}
& \BZ^{(a,b)}_{T} = \frac{1}{n_a}\BJ_{n_a \times n_a}\BZ^{(a,b)} + \frac{1}{n_b} \BZ^{(a,b)} \BJ_{n_b \times n_b} - \frac{1}{n_an_b}\BJ_{n_a \times n_a}\BZ^{(a,b)}\BJ_{n_b \times n_b}, \\
& \BZ^{(a,b)}_{\TB} = \left( \I_{n_a} - \frac{1}{n_a}\BJ_{n_a \times n_a}\right) \BZ^{(a,b)}\left( \I_{n_b} - \frac{1}{n_b}\BJ_{n_b \times n_b}\right).
\end{align}

\begin{proposition} \label{prop:eqv_opt}
The optimality condition with uniqueness in \eqref{eq:optimal} is equivalent to
\begin{align}\label{eq:eqv_opt}
\begin{cases}
z {\cal P}_{\TB} (\I_N) +\BM_{\TB} - \BB_{\TB} \succeq  \bzero, \\
 \BM_T^{(a,b)} - \BB^{(a,b)}_T - \frac{z(n_a + n_b)}{2n_an_b}\BJ_{n_a\times n_b} = \bzero, \quad \forall a \neq b, \\
\BB^{(a,b)} = (\BB^{(b,a)})^{\top}, \quad \BB^{(a,a)} = \bzero, \quad  \BB^{(a,b)} >  \bzero, \quad \forall \, a \neq b.
\end{cases}
\end{align}
\end{proposition}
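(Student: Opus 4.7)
The plan is to reduce \eqref{eq:optimal} to the statement ``$\BQ$ lies in $\TB$ and is PSD,'' by first showing complementary slackness holds automatically (so $\BQ \BX = \BX\BQ = \bzero$, equivalently $\BQ \in \TB$), and then expanding the identities $\BQ_T = \bzero$ and $\BQ_{\TB} \succeq \bzero$ block by block to match the three lines of \eqref{eq:eqv_opt}. The converse is one-line symbolic reconstruction.

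\textbf{Step 1 (from \eqref{eq:optimal} to $\BQ \in \TB$).} Assume \eqref{eq:optimal}. The first thing I would check is $\lag \BQ, \BX\rag = 0$, computing the three pieces of $\BQ = z(\I_N - \BE) + \BM - \BB$ against $\BX$. Since $\Tr(\BX) = k$ and a block computation gives $\lag \BE, \BX\rag = \sum_a \lag \frac{1}{n_a}\BJ_{n_a\times n_a}, \frac{1}{n_a}\BJ_{n_a\times n_a}\rag = k$, we get $\lag \I_N - \BE, \BX\rag = 0$. The identity $(\I_{n_a} - \frac{1}{n_a}\BJ_{n_a \times n_a})\bone_{n_a} = \bzero$ forces $\BM^{(a,a)}\bone_{n_a} = \bzero$, whence $\lag \BM, \BX\rag = 0$; and $\lag \BB, \BX\rag = 0$ by $\BB^{(a,a)} = \bzero$. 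Combining $\lag \BQ, \BX\rag = 0$ with $\BQ, \BX \succeq \bzero$ yields $\BQ \BX = \BX \BQ = \bzero$, equivalently $\BQ = \BQ_{\TB}$.

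\textbf{Step 2 (block-wise projection and matching).} Using the block formula $\BZ^{(a,b)}_{\TB} = (\I_{n_a} - \frac{1}{n_a}\BJ)\BZ^{(a,b)}(\I_{n_b} - \frac{1}{n_b}\BJ)$ and the vanishing identity $(\I_n - \frac{1}{n}\BJ)\BJ_{n \times n'} = \bzero$, I would check $\BE \in T$ (hence $(\I_N - \BE)_{\TB} = \PP_{\TB}(\I_N)$ is block-diagonal with diagonal blocks $\I_{n_a} - \frac{1}{n_a}\BJ$), and for $a \neq b$, $(\I_N - \BE)^{(a,b)}_T = (\I_N - \BE)^{(a,b)} = -\frac{1}{2}(\frac{1}{n_a} + \frac{1}{n_b})\BJ_{n_a \times n_b}$; also $\BM^{(a,a)} \in \TB$, so $\BM_T^{(a,a)} = \bzero$. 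Unfolding $\BQ_T = \bzero$ block by block, the $(a,a)$ equations are automatic (every summand has zero $T$-component on the diagonal, using $\BB^{(a,a)} = \bzero$), and for $a \neq b$ the $(a,b)$ equation is
\[
\BM_T^{(a,b)} - \BB^{(a,b)}_T - \frac{z(n_a+n_b)}{2n_a n_b}\BJ_{n_a \times n_b} = \bzero,
\]
which is the second line of \eqref{eq:eqv_opt}. The PSD condition $\BQ \succeq \bzero$ combined with $\BQ = \BQ_{\TB}$ is exactly $z \PP_{\TB}(\I_N) + \BM_{\TB} - \BB_{\TB} \succeq \bzero$, the first line; and the third line is precisely the symmetry/sign constraints on $\BB$ carried over from the setup of \eqref{eq:optimal}.

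\textbf{Step 3 (converse and main obstacle).} Conversely, if \eqref{eq:eqv_opt} holds, the off-diagonal equations combined with the fact that the $T$-components of $z(\I_N - \BE)$, $\BM$ and $\BB$ on diagonal blocks vanish (by the computations above) give $\BQ_T = \bzero$, so $\BQ \in \TB$ and $\BQ \BX = \BX \BQ = \bzero$. The first line then reads $\BQ = \BQ_{\TB} \succeq \bzero$, and the third line supplies the remaining conditions on $\BB$, recovering \eqref{eq:optimal}. There is no conceptual obstacle: the only non-trivial work is the block-wise projection bookkeeping — verifying $\BE \in T$, $\BM^{(a,a)} \in \TB$, and tracking the constant $\frac{n_a + n_b}{2 n_a n_b}$ emerging from the $T$-component of $\I_N - \BE$.
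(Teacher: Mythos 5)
Your proposal is correct and follows essentially the same route as the paper: both directions hinge on the observations that $\lag\BQ,\BX\rag=0$ is automatic from the form of $\BQ$ (hence $\BQ\BX=\BX\BQ=\bzero$, i.e.\ $\BQ\in\TB$), that $\BE\in T$ and $\BM^{(a,a)},\,\BB^{(a,a)}$ have vanishing $T$-components on the diagonal blocks, and that splitting $\BQ=\BQ_T+\BQ_{\TB}$ block by block produces exactly the three lines of \eqref{eq:eqv_opt}. The only cosmetic difference is that the paper extracts the off-diagonal $T$-equation from $\BQ^{(a,b)}\bone_{n_b}=\bzero$ while you apply the projection formula directly; the content is identical.
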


\begin{proof}
We first show that \eqref{eq:optimal} implies \eqref{eq:eqv_opt}, and then show the other direction. 
\paragraph{$\eqref{eq:optimal}\Longrightarrow\eqref{eq:eqv_opt}$:}
Noting that $\BE\in T$, ${\cal P} (\I_N) = \BI_N - \BX$ and $\BQ$ has the form of \eqref{eq:Q}, the projection of $\BQ$ on $\TB$ is given by
\begin{equation*}
\BQ_{\TB} = (\I_N - \BX)\BQ(\I_N - \BX)=  z(\I_N-\BX) + \BM_{\TB} - \BB_{\TB}\succeq \bzero
\end{equation*}
which gives the first expression in~\eqref{eq:eqv_opt}. For the second one in~\eqref{eq:eqv_opt},  we have $\BQ_T = \bzero$ since $\BQ\BX = \BX\BQ = \bzero$ and thus $\BQ^{(a,b)}\bone_{n_b} = \bzero$ for all pairs of $(a,b).$ For $\BQ^{(a,a)}$ with $1\leq a\leq k$, $\BQ^{(a,a)}\bone_{n_a} = \bzero$ holds automatically by the definition of $\BQ$ in~\eqref{eq:Q}. For $a\neq b$, straightforward calculations lead to
\begin{equation}\label{eq:Q1}
\BQ^{(a,b)}\bone_{n_b} = -\frac{n_b z}{2}\left(\frac{1}{n_a} + \frac{1}{n_b}\right)\bone_{{n_a}} + \BM^{(a,b)}\bone_{n_b } - \BB^{(a,b)}\bone_{n_b} = \bzero.
\end{equation}
Thus, one has $ \frac{1}{n_b}\BB^{(a,b)}\BJ_{n_b\times n_b} =  \frac{1}{n_b}\BM^{(a,b)}\BJ_{n_b\times n_b }   -\frac{ z}{2}\left(\frac{1}{n_a} + \frac{1}{n_b}\right)\BJ_{{n_a}\times n_b}$ for all $a\neq b$, which implies $\BB_T^{(a,b)} = \BM_T^{(a,b)} - \frac{z(n_a + n_b)}{2n_an_b}\BJ_{n_a \times n_b}.$
The third formula in~\eqref{eq:eqv_opt} satisfies automatically.

\paragraph{$\eqref{eq:eqv_opt}\Longrightarrow\eqref{eq:optimal}$:} It suffices to prove $\BQ$ in \eqref{eq:Q} is positive semidefinite. 
By definition, the matrix $\BE^{(a,b)}$ is equal to $\frac{1}{2}\left( \frac{1}{n_a} + \frac{1}{n_b}\right) \BJ_{n_a\times n_b}$ and $\PP_{\TB}(\I_N) = \I_N - \BX$. Adding the first two formulas in~\eqref{eq:eqv_opt} blockwisely over all $(a, b)$ gives 
\begin{equation*}
z(\I_N-\BX) + \BM - \BB - z(\BE - \BX) = \underbrace{ z(\I_N - \BE) + \BM - \BB}_{\BQ} \succeq \bzero
\end{equation*}
where 
we have used the following facts:  $\BX^{(a,a)}=\BE^{(a,a)}$, $\BX^{(a,b)}=\bzero$ when $a\neq b$, $\BM^{(a,a)}_T=\bzero$ which follows from \eqref{def:M_special}, and $\BB^{(a,a)}_T=\bzero$ due to $\BB^{(a,a)}=\bzero$.
This shows $\BQ\succeq \bzero$. 
 \end{proof}

According to \eqref{eq:eqv_opt}, $\BB_T^{(a,b)}$ is determined by $\BM^{(a,b)}$ and $z$. So the only free variables are $z$ and $\BB_{\TB}^{(a,b)}$ for $a \neq b$.
To determine $z$, we replace $z\PP_{\TB} (\I_N) +\BM_{\TB} - \BB_{\TB} \succeq  \bzero$ by a stronger condition $z \geq \| \BM_{\TB} - \BB_{\TB}\|$ 
which clearly implies the former one. 
To choose $\BB_{\TB}^{(a,b)}$ for any $a \neq b$, notice that
\begin{equation*}
\BB^{(a,b)}> \bzero \Longleftrightarrow \BB_{\TB}^{(a,b)} + \BB_T^{(a,b)} > \bzero \Longleftrightarrow \BB_{\TB}^{(a,b)} > \frac{z(n_a + n_b)}{2n_an_b}\BJ_{n_a\times n_b} - \BM^{(a,b)}_T,
\end{equation*}
where 
we have used a substitution for $\BB_T^{(a,b)}$. 
To sum up, we have derived a replacement sufficient condition which guarantees  $\BX$ as the unique global minimum of \eqref{eq:prime}:
\begin{align}\label{cond:suff-2}
\begin{cases}
z \geq \| \BM_{\TB} - \BB_{\TB}\|, \\
\BB = \BB^\top, \\
\BB^{(a,a)} = \bzero, \quad \forall \, 1 \leq a \leq k, \\
\BB^{(a,b)}_T = \BM_T^{(a,b)} - \frac{z(n_a + n_b)}{2n_an_b}\BJ_{n_a\times n_b}, \quad \forall \, a \neq b, \\
\BB_{\TB}^{(a,b)} > \frac{z(n_a + n_b)}{2n_an_b}\BJ_{n_a\times n_b} - \BM^{(a,b)}_T, \quad \forall \, a \neq b. \\
\end{cases}
\end{align}

\subsection{Proof of Theorem~\ref{thm:main}} \label{sec:proof_main}
Now we are ready to prove the main theorem, which follows directly from the proposition below.
\begin{proposition}\label{prop:main}

Assume the proximity condition~\eqref{eq:prox1} holds for the partition $\{\Gamma_a\}_{a=1}^k$. We can choose $z$ and $\BB$ such that 
\begin{equation*}
z = \| \BM_{\TB} - \BB_{\TB} \|, \quad \BB_{\TB}^{(a,b)} = 4\bu_{a,b}\bu_{b,a}^{\top}, \quad \forall \, a \neq b,
\end{equation*}
and the sufficient condition in~\eqref{cond:suff-2} is satisfied.
Therefore, whenever the proximity condition holds, $\BX=\sum_{a=1}^k\frac{1}{|\Gamma_a|}\bone_{\Gamma_a}\bone_{\Gamma_a}^\top$ is the unique minimizer of the Peng-Wei relaxation of $k$-means.
\end{proposition}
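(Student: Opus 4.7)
My plan is to verify the five clauses of the sufficient optimality condition \eqref{cond:suff-2} with the prescribed choices $z = \|\BM_{\TB} - \BB_{\TB}\|$ and $\BB_{\TB}^{(a,b)} = 4\bu_{a,b}\bu_{b,a}^\top$ for $a\neq b$. The first clause is immediate from the definition of $z$; symmetry follows from $(4\bu_{a,b}\bu_{b,a}^\top)^\top = 4\bu_{b,a}\bu_{a,b}^\top$; and $\BB^{(a,a)}=\bzero$ reduces to $\BB_{\TB}^{(a,a)}=\bzero$ (by choice) together with $\BB_T^{(a,a)}=\bzero$, the latter being automatic because $\BM^{(a,a)}=\BP_a\BD^{(a,a)}\BP_a$ is annihilated by both $\bone_{n_a}$ multiplications and hence already lies in $\TB$. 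The fourth clause then forces $\BB_T^{(a,b)}=\BM_T^{(a,b)}-\frac{z(n_a+n_b)}{2n_an_b}\BJ_{n_a\times n_b}$, and the whole burden collapses to the strict entrywise positivity of the off-diagonal blocks.

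For the positivity, I would first compute $\BM^{(a,b)}$ explicitly. Writing $\bx_{a,i}=\bc_a+\bar{\bx}_{a,i}$ and carrying out the double centering in \eqref{def:M}, the entries collapse to
\begin{equation*}
\BM^{(a,b)}_{ij} = h_{a,b}^2 - 2h_{a,b}(\bu_{a,b})_i - 2h_{a,b}(\bu_{b,a})_j - 2\bar{\bx}_{a,i}^\top\bar{\bx}_{b,j},
\end{equation*}
using $\bone_{n_a}^\top\overline{\BX}_a=\bzero$. Applying $\BP_a(\cdot)\BP_b$ kills every summand with a $\bone$ factor and leaves $\BM_{\TB}^{(a,b)} = -2\overline{\BX}_a\overline{\BX}_b^\top$, while $\BM_T^{(a,b)}$ retains the $h_{a,b}$-dependent rank-one-plus-constant pieces. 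Adding $\BB_{\TB}^{(a,b)}=4\bu_{a,b}\bu_{b,a}^\top$ to the mandated $\BB_T^{(a,b)}$ and rearranging produces the clean factorization
\begin{equation*}
\BB^{(a,b)}_{ij} = \bigl(h_{a,b}-2(\bu_{a,b})_i\bigr)\bigl(h_{a,b}-2(\bu_{b,a})_j\bigr) - \frac{z(n_a+n_b)}{2n_an_b}.
\end{equation*}
By Proposition~\ref{prop:prox2} the proximity condition reads $h_{a,b}-2(\bu_{a,b})_i>\sqrt{(\sum_l\|\overline{\BX}_l\|^2)(1/n_a+1/n_b)}$ and the same inequality holds with $(\bu_{b,a})_j$, so the displayed product strictly exceeds $(\sum_l\|\overline{\BX}_l\|^2)(n_a+n_b)/(n_an_b)$; consequently $\BB^{(a,b)}_{ij}>0$ as soon as $z\leq 2\sum_l\|\overline{\BX}_l\|^2$.

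The principal obstacle I expect is the operator-norm estimate $\|\BM_{\TB}-\BB_{\TB}\|\leq 2\sum_l\|\overline{\BX}_l\|^2$. Using the identity $\bu_{a,b}\bu_{b,a}^\top = -\overline{\BX}_a\bw_{a,b}\bw_{a,b}^\top\overline{\BX}_b^\top$, the off-diagonal blocks consolidate to $-2\overline{\BX}_a(\I_m-2\bw_{a,b}\bw_{a,b}^\top)\overline{\BX}_b^\top$, and with $\bp_a:=\overline{\BX}_a^\top\bv_a$ the quadratic form becomes
\begin{equation*}
\bv^\top(\BM_{\TB}-\BB_{\TB})\bv = -2\Bigl\|\sum_a\bp_a\Bigr\|^2 + 4\sum_{a\neq b}(\bp_a^\top\bw_{a,b})(\bp_b^\top\bw_{a,b}).
\end{equation*}
The $k=2$ case is instructive since the expression collapses to $-2\|\bp_1+(\I_m-2\bw_{1,2}\bw_{1,2}^\top)\bp_2\|^2$, which is manifestly nonpositive; the triangle inequality for the reflection together with Cauchy-Schwarz on $(\|\overline{\BX}_1\|\|\bv_1\|,\|\overline{\BX}_2\|\|\bv_2\|)$ then yields the bound $2(\|\overline{\BX}_1\|^2+\|\overline{\BX}_2\|^2)$ directly. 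Extending this template across all $\binom{k}{2}$ reflections, by grouping pair contributions and invoking weighted AM-GM with the constraint $\sum_a\|\bv_a\|^2=1$, should recover the same constant $2$ in the general case. Once this spectral bound is in hand, Proposition~\ref{lem:optimal} delivers the uniqueness of $\BX$ immediately.
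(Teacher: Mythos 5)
Your proposal is correct and follows the same architecture as the paper's proof: compute $\BM_{\TB}^{(a,b)}=-2\overline{\BX}_a\overline{\BX}_b^\top$ and the explicit form of $\BM_T^{(a,b)}$, reduce everything to the strict positivity of the off-diagonal blocks of $\BB$, and control $z$ via the reflection structure $\BM_{\TB}^{(a,b)}-\BB_{\TB}^{(a,b)}=-2\overline{\BX}_a(\I_m-2\bw_{a,b}\bw_{a,b}^\top)\overline{\BX}_b^\top$ (this is exactly the paper's Lemmas~\ref{lem:D&MTB}--\ref{lem:MT} and~\ref{lem:operator-bound}). The one genuine, and welcome, difference is in the positivity step: the paper shows the entries of $-\BB^{(a,b)}$ equal $f(-u_{a,b,i},-u_{b,a,j})$ for a bilinear $f$ and maximizes $f$ over the quadrant $x,y\geq-\tau_{a,b}$ by checking the signs of $\partial f/\partial x$ and $\partial f/\partial y$, whereas your identity $\BB^{(a,b)}_{ij}=\bigl(h_{a,b}-2(\bu_{a,b})_i\bigr)\bigl(h_{a,b}-2(\bu_{b,a})_j\bigr)-\tfrac{z(n_a+n_b)}{2n_an_b}$ makes the conclusion immediate, since each factor is bounded below by $h_{a,b}-2\tau_{a,b}>\sqrt{(\sum_l\|\overline{\BX}_l\|^2)(1/n_a+1/n_b)}>0$; this is a cleaner route to the same inequality. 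On the operator-norm estimate, which you flag as the principal obstacle and leave as a sketch for general $k$: no weighted AM--GM or pairing of reflections is needed. Once you have the blocks $-2\overline{\BX}_a\BW^{(a,b)}\overline{\BX}_b^\top$ with each $\BW^{(a,b)}$ orthogonal (hence of unit operator norm), the blockwise triangle inequality gives $|\bv^\top(\BM_{\TB}-\BB_{\TB})\bv|\leq 2\bigl(\sum_l\|\overline{\BX}_l\|\,\|\bv_l\|\bigr)^2$, and Cauchy--Schwarz finishes with the constant $2\sum_l\|\overline{\BX}_l\|^2$; this is precisely how the paper closes Lemma~\ref{lem:operator-bound}, and your decomposition into $-2\|\sum_a\bp_a\|^2$ plus cross terms is actually harder to bound than keeping the blocks intact.
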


\begin{lemma} \label{lem:D&MTB}
For any $1 \leq a,b \leq k$, $\BM_{\TB}^{(a,b)} = \BD_{\TB}^{(a,b)} = -2 \overline{\BX}_a  \overline{\BX}_b^\top$.
\end{lemma}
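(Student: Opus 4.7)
The plan is to verify the identity by two routine observations: the $\TB$-projection is a double-centering operation that annihilates any row-constant or column-constant matrix, and the difference $\BM^{(a,b)} - \BD^{(a,b)}$ consists entirely of such matrices.

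First I would write out $\BD^{(a,b)}$ in factored form. From $\BD^{(a,b)}_{i,j} = \|\bx_{a,i}\|^2 + \|\bx_{b,j}\|^2 - 2\langle \bx_{a,i}, \bx_{b,j}\rangle$, setting $\bd_a = (\|\bx_{a,1}\|^2, \ldots, \|\bx_{a,n_a}\|^2)^\top$, we get
\[
\BD^{(a,b)} = \bd_a \bone_{n_b}^\top + \bone_{n_a} \bd_b^\top - 2 \BX_a \BX_b^\top.
\]
Since $\bone_{n_a}^\top (\I_{n_a} - \tfrac{1}{n_a}\BJ_{n_a\times n_a}) = \bzero^\top$ and symmetrically on the right, the first two rank-one terms vanish under $\PP_{\TB}$. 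For the third term, use $(\I_{n_a} - \tfrac{1}{n_a}\BJ_{n_a\times n_a})\BX_a = \BX_a - \bone_{n_a}\bc_a^\top = \overline{\BX}_a$ and analogously $\BX_b^\top(\I_{n_b} - \tfrac{1}{n_b}\BJ_{n_b\times n_b}) = \overline{\BX}_b^\top$, yielding
\[
\BD^{(a,b)}_{\TB} = -2\,\overline{\BX}_a\,\overline{\BX}_b^\top.
\]

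Next, I would show $\BM^{(a,b)}_{\TB} = \BD^{(a,b)}_{\TB}$. Inspecting the definition \eqref{def:M}, the difference $\BM^{(a,b)} - \BD^{(a,b)}$ is a sum of three pieces: $-\tfrac{1}{n_a}\BD^{(a,a)}\BJ_{n_a\times n_b}$, which has identical columns and is therefore of the form $\bv\bone_{n_b}^\top$; $-\tfrac{1}{n_b}\BJ_{n_a\times n_b}\BD^{(b,b)}$, which is of the form $\bone_{n_a}\bw^\top$; and a scalar multiple of $\BJ_{n_a\times n_b}$. Each of these lies in the range of the map $\BZ \mapsto \tfrac{1}{n_a}\BJ_{n_a\times n_a}\BZ + \tfrac{1}{n_b}\BZ\BJ_{n_b\times n_b} - \tfrac{1}{n_an_b}\BJ_{n_a\times n_a}\BZ\BJ_{n_b\times n_b}$, i.e.\ lies in $T$ (block-by-block), so is killed by $\PP_{\TB}$. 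Combining the two displays yields the claim for $a \neq b$. The case $a=b$ is immediate: by \eqref{def:M_special}, $\BM^{(a,a)}$ is already its own double-centering, and the same computation as above applied with $b = a$ gives $\BM^{(a,a)} = -2\,\overline{\BX}_a\overline{\BX}_a^\top$.

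There is no real obstacle here; the entire statement is a consequence of the algebraic identity that double-centering is the orthogonal projection onto the space of matrices with zero row and column sums, together with the explicit inner-product/squared-norm decomposition of the Euclidean distance matrix.
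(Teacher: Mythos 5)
Your proof is correct and follows essentially the same route as the paper's: factor $\BD^{(a,b)}$ into two rank-one terms plus $-2\BX_a\BX_b^\top$, note that the rank-one terms are annihilated by the double-centering projection while the cross term centers to $-2\overline{\BX}_a\overline{\BX}_b^\top$, and observe that $\BM^{(a,b)}-\BD^{(a,b)}$ lies in $T$ blockwise. You actually spell out the last step, which the paper dismisses as ``easy to check,'' so no issues here.
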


\begin{proof}
Let $\bx_{a,i}$ and $\bx_{b,j}$ be the $i$-th and $j$-th points in the $a$-th and $b$-th clusters, respectively. Then,
\begin{equation*}
\| \bx_{a,i} - \bx_{b,j} \|^2 = \| \bx_{a,i} \|^2 -2 \lag \bx_{a,i}, \bx_{b,j} \rag + \| \bx_{b,j} \|^2.
\end{equation*}
Denote  by $\bphi_a \in \RR^{n_a}$ and $\bphi_b\in\RR^{n_b}$  the column vectors consisted of $\| \bx_{a,i} \|^2$ and $\|\bx_{b,j}\|^2$, respectively. Then,
\begin{equation*}
\BD^{(a,b)} = \bphi_a \bone_{n_b}^\top - 2 {\BX}_a  {\BX}_b^\top + \bone_{n_a} \bphi_b^\top.
\end{equation*}
\begin{align*}
\BD_{\TB}^{(a,b)} &= (\I_{n_a} - \frac1 n_a \BJ_{n_a \times n_a}) \BD^{(a,b)} (\I_{n_b} - \frac1 n_b \BJ_{n_b \times n_b}) \\
    &=-2 (\I_{n_a} - \frac1 n_a \BJ_{n_a \times n_a}) {\BX}_a  {\BX}_b^\top (\I_{n_b} - \frac1 n_b \BJ_{n_b \times n_b}) \\
    &= -2 \overline{\BX}_a  \overline{\BX}_b^\top.
\end{align*}
The matrix $\BM$ is defined  in \eqref{def:M}, and  it is easy to check that $\BM_{\TB}^{(a,b)} = \BD_{\TB}^{(a,b)}$.
 \end{proof}

\begin{lemma} \label{lem:operator-bound}
The operator norm of $\BM_{\TB} - \BB_{\TB}$ is bounded by $2\sum_{l=1}^k \|\overline{\BX}_l\|^2$, i.e., 
\begin{equation*}
z= \| \BM_{\TB} - \BB_{\TB} \| \leq 2\sum_{l=1}^k\|\overline{\BX}_l\|^2.
\end{equation*}
\end{lemma}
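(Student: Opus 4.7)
The plan is to rewrite the $(a,b)$ block of $\BM_{\TB} - \BB_{\TB}$ in a unified factored form that exposes an operator-norm-one reflection, after which the bound will follow from two successive Cauchy-Schwarz inequalities.

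First I would use the preceding lemma to get $\BM_{\TB}^{(a,b)} = -2\overline{\BX}_a\overline{\BX}_b^\top$, and combine it with the antisymmetry $\bw_{a,b} = -\bw_{b,a}$, which gives $\bu_{b,a} = -\overline{\BX}_b\bw_{a,b}$ and therefore $4\bu_{a,b}\bu_{b,a}^\top = -4\overline{\BX}_a\bw_{a,b}\bw_{a,b}^\top\overline{\BX}_b^\top$ for $a\neq b$. Since $\BB_{\TB}^{(a,a)} = \bzero$, the diagonal case needs no extra work, and the two cases consolidate into the uniform expression
\[
(\BM_{\TB} - \BB_{\TB})^{(a,b)} = -2\overline{\BX}_a \BR_{a,b}\overline{\BX}_b^\top,
\]
with $\BR_{a,b} := \BI_m - 2\bw_{a,b}\bw_{a,b}^\top$ for $a\neq b$ and $\BR_{a,a} := \BI_m$. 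Each $\BR_{a,b}$ is either a Householder reflection or the identity, so $\|\BR_{a,b}\| = 1$ in all cases.

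Next I would compute the operator norm by pairing against unit vectors $\bv, \bv' \in \RR^N$, decomposed cluster-wise as $\bv = (\bv_1^\top,\ldots,\bv_k^\top)^\top$ and likewise for $\bv'$. Then
\[
\bv^\top(\BM_{\TB} - \BB_{\TB})\bv' = -2\sum_{a,b}(\overline{\BX}_a^\top\bv_a)^\top \BR_{a,b}(\overline{\BX}_b^\top\bv_b'),
\]
and using $\|\BR_{a,b}\|\leq 1$ together with $\|\overline{\BX}_a^\top\bv_a\|\leq \|\overline{\BX}_a\|\|\bv_a\|$ blockwise yields
\[
\bigl|\bv^\top(\BM_{\TB} - \BB_{\TB})\bv'\bigr| \leq 2\Bigl(\sum_a\|\overline{\BX}_a\|\|\bv_a\|\Bigr)\Bigl(\sum_b\|\overline{\BX}_b\|\|\bv_b'\|\Bigr).
\]
A second Cauchy-Schwarz on each factor gives $\sum_a\|\overline{\BX}_a\|\|\bv_a\| \leq \bigl(\sum_l\|\overline{\BX}_l\|^2\bigr)^{1/2}\|\bv\|$, and analogously for $\bv'$. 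With $\|\bv\|=\|\bv'\|=1$ the two factors multiply to give $2\sum_l\|\overline{\BX}_l\|^2$; taking the supremum over unit $\bv, \bv'$ proves the claim.

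The essential step is the reflection identification: once the off-diagonal blocks are rewritten so that $\BR_{a,b}$ absorbs both the $\overline{\BX}_a\overline{\BX}_b^\top$ piece from $\BM_{\TB}$ and the rank-one correction from $\BB_{\TB}$, the operator norm of each block coefficient is $1$ and the double Cauchy-Schwarz delivers the sharp constant. A naive triangle-inequality bound $\|\BM_{\TB}\| + \|\BB_{\TB}\|$ (bounding each summand separately) would yield a strictly larger constant than $2\sum_l\|\overline{\BX}_l\|^2$, so recognizing the cancellation produced by $\bw_{a,b} = -\bw_{b,a}$ is what makes the sharper bound possible.
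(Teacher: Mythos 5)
Your proposal is correct and follows essentially the same route as the paper: both identify $(\BM_{\TB}-\BB_{\TB})^{(a,b)} = -2\overline{\BX}_a(\I_m - 2\bw_{a,b}\bw_{a,b}^\top)\overline{\BX}_b^\top$ via the Householder-reflection cancellation and then apply the double Cauchy--Schwarz over cluster blocks. The only cosmetic difference is that you test against two unit vectors $\bv,\bv'$ while the paper uses the symmetric quadratic form with a single $\by$; both are valid and yield the same constant.
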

\begin{proof}
Note that $\bu_{a,b} = \overline{\BX}_a \bw_{a,b}$ and by Lemma~\ref{lem:D&MTB}, $\BM_{\TB}^{(a,b)}= -2\overline{\BX}_a\overline{\BX}_b^{\top}$. Hence, $ \BB_{\TB} - \BM_{\TB} = 2 \widehat{\BX} \BW \widehat{\BX}^\top$, where $\widehat{\BX} \in \RR^{N \times mk}$ is defined as
\begin{equation*}
\widehat{\BX}^{(a,b)} = \bzero, \quad \widehat{\BX}^{(a,a)} = \overline{\BX}_a, \quad  \forall a \neq b,
\end{equation*}
and $\BW \in \RR^{mk \times mk}$ is given by
\begin{equation*}
\BW^{(a,b)} = \I_m -2\bw_{a,b}\bw_{a,b}^{\top}, \quad \BW^{(a,a)} = \I_m, \quad  \forall a \neq b.
\end{equation*}
Note that each $\BW^{(a,b)}$ is an orthogonal matrix and thus $\|\BW^{(a,b)}\| = 1.$ Let $\by$ be a vector of length $N$, and denote by $\by_a$ the $a$-th block of $\by$, $1\leq a\leq k$. There holds,
\begin{align*}
\left|\by^{\top}(\BM_{\TB} - \BB_{\TB})\by\right| 
& \leq 2 \sum_{a=1}^k\sum_{b=1}^k \left|\by^{\top}_a\overline{\BX}_a \BW^{(a,b)} \overline{\BX}_b\by_b^{\top}\right| \\
& \leq 2 \sum_{a=1}^k\sum_{b=1}^k \|\overline{\BX}_a\| \|\by_a\| \|\overline{\BX}_b\| \|\by_b\| \\
& \leq 2 \left( \sum_{l=1}^k \|\overline{\BX}_l\| \|\by_l\| \right)^2 \\
& \leq 2 \left(\sum_{l=1}^k \|\overline{\BX}_l\|^2\right) \left(\sum_{l=1}^k \|\by_l\|^2\right).
\end{align*}
Therefore, the operator norm of $\BM_{\TB} - \BB_{\TB}$ is bounded by $2\sum_{l=1}^k \|\overline{\BX}_l\|^2.$
 \end{proof}

 It only remains to check whether~\eqref{eq:prox} implies the second inequality in \eqref{cond:suff-2}:
\begin{equation} \label{cond:suff-3}
\BB_{\TB}^{(a,b)} = 4\bu_{a,b}\bu^{\top}_{b,a} > \frac{z(n_a + n_b)}{2n_an_b}\BJ_{n_a\times n_b} - \BM^{(a,b)}_T, \quad \forall \, a \neq b.
\end{equation}
To show this, we first derive an explicit expression for $\BM^{(a,b)}_{T}$.
\begin{lemma} \label{lem:D1}
For any $a \neq b$, there holds
\begin{equation*}
\frac{1}{n_b}\BD^{(a,b)} \bone_{n_b} - \frac{1}{n_a}\BD^{(a,a)}\bone_{n_a} = \left( h^2_{a,b} + \frac{1}{n_b}\|\overline{\BX}_b\|_F^2 - \frac{1}{n_a} \|\overline{\BX}_a\|_F^2 \right) \bone_{n_a} 
- 2h_{a,b} \bu_{a,b}.
\end{equation*}
\end{lemma}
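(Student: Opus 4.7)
The plan is to prove this lemma by straightforward entry-wise computation, exploiting the standard ``complete the square'' identity for squared distances together with the variance decomposition for a sample mean.

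First I would write the $i$-th entry of $\frac{1}{n_b}\BD^{(a,b)}\bone_{n_b}$ as
\[
\left[\tfrac{1}{n_b}\BD^{(a,b)}\bone_{n_b}\right]_i \;=\; \tfrac{1}{n_b}\sum_{j=1}^{n_b}\|\bx_{a,i}-\bx_{b,j}\|^2,
\]
expand the squared norm, and then use the bias-variance identity $\tfrac{1}{n_b}\sum_j \|\bx_{b,j}\|^2 = \|\bc_b\|^2+\tfrac{1}{n_b}\|\overline{\BX}_b\|_F^2$ (which follows because the cross term $2\bc_b^\top\sum_j(\bx_{b,j}-\bc_b)$ vanishes). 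After recombining, this entry simplifies to $\|\bx_{a,i}-\bc_b\|^2+\tfrac{1}{n_b}\|\overline{\BX}_b\|_F^2$. An identical computation yields $\left[\tfrac{1}{n_a}\BD^{(a,a)}\bone_{n_a}\right]_i = \|\bx_{a,i}-\bc_a\|^2+\tfrac{1}{n_a}\|\overline{\BX}_a\|_F^2$.

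Next I would subtract the two expressions and apply the identity
\[
\|\bx_{a,i}-\bc_b\|^2 - \|\bx_{a,i}-\bc_a\|^2 \;=\; h_{a,b}^2 - 2h_{a,b}\,(\bx_{a,i}-\bc_a)^\top \bw_{a,b},
\]
obtained by writing $\bx_{a,i}-\bc_b = (\bx_{a,i}-\bc_a)-(\bc_b-\bc_a)$, expanding, and using $\bc_b-\bc_a = h_{a,b}\bw_{a,b}$. The key observation is that $(\bx_{a,i}-\bc_a)^\top \bw_{a,b}$ is precisely the $i$-th component of $\overline{\BX}_a\bw_{a,b}=\bu_{a,b}$, so this scalar difference assembles vectorially into $h_{a,b}^2\bone_{n_a} - 2h_{a,b}\bu_{a,b}$.

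Combining with the norm-ratio terms from the first step yields exactly the right-hand side of the lemma. No step presents a real obstacle; the only thing to be careful about is keeping the sign and the direction of $\bw_{a,b}$ consistent (the vector points from $\bc_a$ toward $\bc_b$), so that the linear correction appears as $-2h_{a,b}\bu_{a,b}$ rather than $+2h_{a,b}\bu_{a,b}$.
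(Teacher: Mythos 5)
Your proposal is correct and follows essentially the same route as the paper: an entry-wise expansion of the squared distances around the cluster centroids, with the cross terms vanishing by the definition of the sample means, and the linear term identified with $(\bu_{a,b})_i$ via $\bc_b-\bc_a=h_{a,b}\bw_{a,b}$. The paper just carries this out in one chain of equalities rather than splitting off the two bias--variance identities as separate steps.
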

\begin{proof}
The $i$-th entry of the left hand side is
\begin{align*}
(LHS)_i =&\frac{1}{n_b} \sum_{l=1}^{n_b} \|\bx_{a,i} - \bx_{b,l}\|^2 - \frac{1}{n_a}\sum_{l=1}^{n_a}\|\bx_{a,i} - \bx_{a,l}\|^2 \\
=& \|\bc_a- \bc_b\|^2 - 2 \lag \bx_{a,i} - \bc_a, \bc_b-\bc_a \rag  +\frac{1}{n_b}\sum_{l=1}^{n_b}  \|\bx_{b,l} - \bc_b\|^2 -\frac{1}{n_a}\sum_{l=1}^{n_a}\|\bx_{a,l} - \bc_a\|^2  \\
=& h^2_{a,b} - 2h_{a,b}(\overline{\BX}_a \bw_{b,a})_i + \frac{1}{n_b}\|\overline{\BX}_b\|_F^2 - \frac{1}{n_a} \|\overline{\BX}_a\|_F^2 \\
=&(RHS)_i.
\end{align*}
 \end{proof}
 
\begin{lemma}\label{lem:MT}
For any $a \neq b$, there holds
\begin{equation*}
\BM_T^{(a,b)} = h^2_{a,b}\BJ_{n_a\times n_b} - 2h_{a,b} \bu_{a,b}\bone_{n_b}^{\top} - 2h_{a,b} \bone_{n_a} \bu_{b,a}^{\top}.
\end{equation*}
\end{lemma}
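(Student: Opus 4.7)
\medskip

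\noindent\textbf{Proof plan for Lemma~\ref{lem:MT}.} The plan is to compute $\BM_T^{(a,b)}$ directly from the formula
\[
\BM_T^{(a,b)} \;=\; \frac{1}{n_a}\BJ_{n_a\times n_a}\BM^{(a,b)} \;+\; \frac{1}{n_b}\BM^{(a,b)}\BJ_{n_b\times n_b} \;-\; \frac{1}{n_an_b}\BJ_{n_a\times n_a}\BM^{(a,b)}\BJ_{n_b\times n_b},
\]
which reduces the problem to computing the column-average vector $\tfrac{1}{n_b}\BM^{(a,b)}\bone_{n_b}$, the row-average row vector $\tfrac{1}{n_a}\bone_{n_a}^\top\BM^{(a,b)}$, and the grand average $\tfrac{1}{n_an_b}\bone_{n_a}^\top\BM^{(a,b)}\bone_{n_b}$.

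The main step is the column average. Applying $\tfrac{1}{n_b}\bone_{n_b}$ on the right of the defining formula \eqref{def:M} for $\BM^{(a,b)}$ yields
\[
\tfrac{1}{n_b}\BM^{(a,b)}\bone_{n_b} \;=\; \left[\tfrac{1}{n_b}\BD^{(a,b)}\bone_{n_b} - \tfrac{1}{n_a}\BD^{(a,a)}\bone_{n_a}\right] \;-\; \tfrac{1}{n_b^2}\langle \BD^{(b,b)},\BJ_{n_b\times n_b}\rangle\,\bone_{n_a} \;+\; \tfrac{1}{2}\!\left(\tfrac{1}{n_a^2}\langle \BD^{(a,a)},\BJ\rangle + \tfrac{1}{n_b^2}\langle \BD^{(b,b)},\BJ\rangle\right)\!\bone_{n_a}.
\]
The bracketed term is given by Lemma~\ref{lem:D1}. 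For the remaining terms I will use the elementary identity
\[
\tfrac{1}{n_a}\langle \BD^{(a,a)}, \BJ_{n_a\times n_a}\rangle \;=\; \tfrac{1}{n_a}\!\sum_{i,j}\|\bx_{a,i}-\bx_{a,j}\|^2 \;=\; 2\,\|\overline{\BX}_a\|_F^2,
\]
which lets all the $\|\overline{\BX}_a\|_F^2$ and $\|\overline{\BX}_b\|_F^2$ contributions telescope. After this cancellation I expect to obtain the clean expression
\[
\tfrac{1}{n_b}\BM^{(a,b)}\bone_{n_b} \;=\; h_{a,b}^2\,\bone_{n_a} \;-\; 2h_{a,b}\,\bu_{a,b}.
\]

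For the row average I observe that $(\BM^{(a,b)})^\top = \BM^{(b,a)}$ (this is immediate from the symmetry of $\BD^{(a,a)}$ and $\BD^{(b,b)}$ in \eqref{def:M}, combined with $\BD^{(b,a)} = (\BD^{(a,b)})^\top$), so applying the previous identity with the roles of $a$ and $b$ swapped and transposing gives
\[
\tfrac{1}{n_a}\bone_{n_a}^\top\BM^{(a,b)} \;=\; h_{a,b}^2\,\bone_{n_b}^\top \;-\; 2h_{a,b}\,\bu_{b,a}^\top.
\]
For the grand average I use the key fact that $\bone_{n_a}^\top \bu_{a,b} = \bone_{n_a}^\top \overline{\BX}_a\bw_{a,b} = 0$, since $\bone_{n_a}^\top \overline{\BX}_a = \bzero$ by definition of the centered matrix. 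Hence
\[
\tfrac{1}{n_an_b}\bone_{n_a}^\top\BM^{(a,b)}\bone_{n_b} \;=\; h_{a,b}^2.
\]

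Finally I substitute these three quantities into the formula for $\BM_T^{(a,b)}$ displayed above. The first term contributes $h_{a,b}^2\BJ_{n_a\times n_b} - 2h_{a,b}\,\bone_{n_a}\bu_{b,a}^\top$, the second contributes $h_{a,b}^2\BJ_{n_a\times n_b} - 2h_{a,b}\,\bu_{a,b}\bone_{n_b}^\top$, and subtracting the third ($h_{a,b}^2\BJ_{n_a\times n_b}$) leaves exactly the claimed identity. The only genuinely delicate step is verifying that the $\|\overline{\BX}_a\|_F^2$ and $\|\overline{\BX}_b\|_F^2$ coefficients cancel in the column-average computation; everything else is bookkeeping.
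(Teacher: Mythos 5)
Your proposal is correct and follows essentially the same route as the paper: both arguments reduce the projection onto $T$ to row/column averages of the blocks of $\BD$, with Lemma~\ref{lem:D1} supplying the key identity and the $\|\overline{\BX}_a\|_F^2$, $\|\overline{\BX}_b\|_F^2$ terms cancelling against the $\langle \BD^{(a,a)},\BJ_{n_a\times n_a}\rangle = 2n_a\|\overline{\BX}_a\|_F^2$ contributions. The only difference is organizational — you apply $\PP_T$ to $\BM^{(a,b)}$ directly and exploit $(\BM^{(a,b)})^\top=\BM^{(b,a)}$ and $\bone_{n_a}^\top\bu_{a,b}=0$, whereas the paper regroups through $\BD_T^{(a,b)}$ into three pieces $\Pi_1,\Pi_2,\Pi_3$ — and I have checked that your cancellation does go through.
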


\begin{proof}
By the definition of $\BM^{(a,b)}$  in \eqref{def:M}, 
\begin{align*}
\BM^{(a,b)}_T 
& = \BD^{(a,b)}_T  - \frac{1}{n_a}\BD^{(a,a)}\BJ_{n_a\times n_b} - \frac{1}{n_b} \BJ_{n_a\times n_b}\BD^{(b,b)}\\ 
& \quad + \frac{1}{2}\left( \frac{1}{n_a^2}\lag \BD^{(a,a)}, \BJ_{n_a\times n_a}\rag +  \frac{1}{n_b^2}\lag \BD^{(b,b)}, \BJ_{n_b\times n_b} \rag\right) \BJ_{n_a\times n_b} \\
& = \underbrace{ \frac{1}{n_b}\BD^{(a,b)}\BJ_{n_b\times n_b} - \frac{1}{n_a}\BD^{(a,a)}\BJ_{n_a\times n_b} }_{\Pi_1}  + \underbrace{ \frac{1}{n_a}\BJ_{n_a\times n_a}\BD^{(a,b)} - \frac{1}{n_b}\BJ_{n_a\times n_b}\BD^{(b,b)}}_{\Pi_2} \\
& \quad + \underbrace{ \left( \frac{1}{2n_a^2}\lag \BD^{(a,a)}, \BJ_{n_a\times n_a}\rag  +  \frac{1}{2n_b^2}\lag \BD^{(b,b)}, \BJ_{n_b\times n_b} \rag - \frac{1}{n_an_b}\lag\BD^{(a,b)}, \BJ_{n_a\times n_b}\rag \right) \BJ_{n_a\times n_a} }_{\Pi_3},
\end{align*}
where we have used 
\begin{equation*}
\BD^{(a,b)}_T = \frac{1}{n_a}\BJ_{n_a\times n_a}\BD^{(a,b)}  + \frac{1}{n_b}\BD^{(a,b)}\BJ_{n_b\times n_b} - \frac{1}{n_an_b}\lag \BD^{(a,b)}, \BJ_{n_a\times n_b}\rag\BJ_{n_a\times n_b}.
\end{equation*}
By Lemma~\ref{lem:D1}, we have
\begin{align*}
\Pi_1 &= \left( \frac{1}{n_b}\BD^{(a,b)} \bone_{n_b} - \frac{1}{n_a}\BD^{(a,a)}\bone_{n_a} \right) \bone_{n_b}^\top \\
    &= \left( h^2_{a,b} + \frac{1}{n_b}\|\overline{\BX}_b\|_F^2 - \frac{1}{n_a} \|\overline{\BX}_a\|_F^2 \right) \BJ_{n_a \times n_b} 
- 2h_{a,b} \bu_{a,b} \bone_{n_b}^\top.
\end{align*}
Similarly,
\begin{align*}
\Pi_2 &= \bone_{n_a} \left( \frac{1}{n_a}\BD^{(b,a)} \bone_{n_a} - \frac{1}{n_b}\BD^{(b,b)}\bone_{n_b} \right)^\top \\
     &=\left( h^2_{a,b} + \frac{1}{n_a}\|\overline{\BX}_a\|_F^2 - \frac{1}{n_b} \|\overline{\BX}_b\|_F^2 \right) \BJ_{n_a \times n_b}
- 2h_{a,b} \bone_{n_a} \bu_{b,a}^\top.
\end{align*}
Moreover, the $(i,j)$-entry of $\Pi_3$ is
\begin{align*}
 (\Pi_3)_{i,j}  
 & = \frac{1}{2n_a^2} \sum_{i=1}^{n_a}\sum_{j=1}^{n_a}  \|\bx_{a,i} - \bx_{a,j}\|^2 + \frac{1}{2n_b^2} \sum_{i=1}^{n_b}\sum_{j=1}^{n_b}  \|\bx_{b,i} - \bx_{b,j}\|^2  - \frac{1}{n_an_b}\sum_{i=1}^{n_a}\sum_{j=1}^{n_b} \|\bx_{a,i} - \bx_{b,j}\|^2 \\
& = \frac{1}{n_a} \sum_{i=1}^{n_a}\|\bx_{a,i} - \bc_a\|^2 + \frac{1}{n_b} \sum_{i=1}^{n_b}\|\bx_{b,i} - \bc_b\|^2 - \frac{1}{n_a} \sum_{i=1}^{n_a} \|\bx_{a,i} - \bc_a\|^2 \\
& \quad - \frac{1}{n_b}\sum_{j=1}^{n_b}\|\bx_{b,j} - \bc_b\|^2 - \|\bc_a - \bc_b\|^2 = - h_{a,b}^2.
\end{align*}
Adding up $(\Pi_1)_{i,j}$, $(\Pi_2)_{i,j}$ and  $(\Pi_3)_{i,j}$ leads to the desired identity.
 \end{proof}

\begin{proof}[\bf Proof of Proposition~\ref{prop:main}: ]
Combined with the explicit expression of $\BM^{(a,b)}_T$,~\eqref{cond:suff-3} is equivalent to
\begin{equation}\label{eq:positivity}
-4\bu_{a,b}\bu_{b,a}^{\top} + \left(\frac{z(n_a + n_b)}{2n_an_b} - h_{a,b}^2\right)\BJ_{n_a\times n_b} + 2h_{(a,b)} (\bu_{a,b}\bone_{n_b}^{\top} + \bone_{n_a}\bu_{b,a}^{\top}) < 0.
\end{equation}
By definition of $\tau_{a,b}$, we have
\begin{equation*}
\tau_{a,b} \geq \max(\bu_{a,b})  , \quad \tau_{a,b} \geq \max(\bu_{b,a}).
\end{equation*}
Define
\begin{equation*}
f(x,y) := -4xy  - 2h_{(a,b)} (x + y) + \frac{z(n_a + n_b)}{2n_an_b} - h_{(a,b)}^2.
\end{equation*}
Let $u_{a,b,i}$ and $u_{b,a,j}$ be the $i$-th and $j$-th entry of $\bu_{a,b}$ and $\bu_{b,a}$ respectively. One can easily see that $f(-u_{a,b,i}, -u_{b,a,j})$ is equal to the $(i,j)$-th entry of the matrix on the left hand side of~\eqref{eq:positivity}. 
Therefore, in order to prove~\eqref{eq:positivity}, it suffices to show that
$f(x,y) < 0$ for all $x,y \geq -\tau_{a,b}.$ Note that
if the proximity condition~\eqref{eq:prox} holds, then $2\tau_{a,b} \leq \|\bc_a - \bc_b\|$. Therefore, $x,y \geq -\tau_{a,b}\geq -\frac{1}{2}h_{a,b}.$

We claim that the maximum of $f(x,y)$ over $\{ (x,y)\in\RR^2:x \geq -\tau_{a,b}, y \geq -\tau_{a,b}\}$ is attained at $x = y = -\tau_{a,b}$ due to bilinearity of $f(x,y).$
More precisely, this follows from $2\tau_{a,b} \leq h_{a,b}$ and 
\begin{align*}
\frac{\pa f}{\pa x} & = -4y - 2h_{(a,b)} \leq 4\tau_{a,b} - 2h_{a,b} \leq 0, \\
\frac{\pa f}{\pa y} & = -4x - 2h_{(a,b)}  \leq 4\tau_{a,b} - 2h_{a,b} \leq 0
\end{align*}
over $\{ (x,y)\in\RR^2:x \geq -\tau_{a,b}, y \geq -\tau_{a,b}\}.$ 
Therefore,~\eqref{eq:positivity} holds if 
\begin{equation*}
\max_{\{x,y\geq -\tau_{a,b}\}} f(x,y) = -4\tau^2_{a,b} + 4h_{a,b}\tau_{a,b} - h_{a,b}^2 + \frac{z(n_a + n_b)}{2n_an_b} < 0.
\end{equation*}
Since $2\tau_{a,b} \leq h_{a,b}$, the inequality above is equivalent to
\begin{equation*}
h_{a,b} - 2\tau_{a,b} >  \sqrt{\frac{z(n_a + n_b)}{2n_an_b}}.
\end{equation*}
Meanwhile, the proximity condition implies
\begin{equation*}
h_{(a,b)} - 2\tau_{a,b} >\sqrt{\frac{\sum_{l=1}^k\|\overline{\BX}_l\|^2(n_a + n_b)}{n_an_b}} \geq \sqrt{\frac{z(n_a + n_b)}{2n_an_b}}.
\end{equation*}
Hence, we have $-4\tau^2_{a,b} + 4h_{a,b}\tau_{a,b} - h^2_{a,b} + \frac{z(n_a + n_b)}{2n_a n_b} < 0$ and~\eqref{eq:positivity} holds.
 \end{proof}

\subsection{Proof of Theorem~\ref{thm:lower}} \label{sec:lower}

This subsection is devoted to proving Theorem~\ref{thm:lower}, the necessary lower bound of $\frac{1}{2}h_{a,b} - \tau_{a,b}$ for $\BX=\sum_{a=1}^k\frac{1}{|\Gamma_a|}\bone_{\Gamma_a}\bone_{\Gamma_a}^\top$ to be a global minimum of the Peng-Wei relaxation of $k$-means.
We will use the necessary condition established in Proposition~\ref{thm:opt_no_unq} for the proof which states that, if $\BX$ is global minimizer, then there exist a number $z$ and a matrix $\BB$ obeying $\BB\geq 0$, $\BB^{(a,a)} = \bzero$ for all $1 \leq a \leq k$, and $\BQ=z (\I_N-\BE) + \BM - \BB\succeq 0$.
\begin{proof}[\bf Proof of Theorem~\ref{thm:lower}: ]
The proof is partitioned into three steps:
\paragraph{Step One:} We first show  that for any $a \neq b$, there holds
\begin{equation} \label{eq:B1}
h^2_{a,b}\bone_{n_a} - 2h_{a,b}\bu_{a,b} = \frac{z (n_a + n_b)}{2n_a n_b} \bone_{n_a} + \frac1 {n_b}\BB^{(a,b)} \bone_{n_b}.
\end{equation}
Note that $\lag \BD^{(a,a)}, \BJ_{n_a\times n_a}\rag = 2n_a\|\overline{\BX}_a\|_F^2$. By Lemma~\ref{lem:D1} and the definition of $\BM^{(a,b)}$  in \eqref{def:M}, we have
\begin{align*}
\BM^{(a,b)}\bone_{n_b} & = {n_b}\left(\frac{1}{n_b} \BD^{(a,b)}\bone_{n_b}  - \frac{1}{n_a}\BD^{(a,a)}\bone_{n_a}\right) \\
& \qquad + \frac{n_b}{2}\left( \frac{1}{n_a^2}\lag \BD^{(a,a)}, \BJ_{n_a \times n_a}\rag -  \frac{1}{n_b^2}\lag \BD^{(b,b)}, \BJ_{n_b \times n_b} \rag \right) \bone_{n_a} \\
& = n_b ( h^2_{a,b}\bone_{n_b} - 2h_{a,b}\bu_{a,b}) \\
& =  \frac{n_b z}{2}\left(\frac{1}{n_a} + \frac{1}{n_b}\right)\bone_{n_a} + \BB^{(a,b)}\bone_{n_b},
\end{align*}
where the last equation follows from~\eqref{eq:Q1}.

\paragraph{Step Two:} Next we establish a lower bound for $z$ and show that $z \geq 2\max\|\overline{\BX}_a\|^2$.
Combining $\BQ = z(\I_N - \BE) +\BM - \BB \succeq \bzero$ with $\BB^{(a,a)} = \bzero$ results in 
\begin{equation*}
\BQ^{(a,a)} = z\left(\I_{n_a} - \frac{1}{n_a}\BJ_{n_a\times n_a}\right) +\BM^{(a,a)} \succeq \bzero
\end{equation*} 
for all $1\leq a\leq k$. 
Also, Lemma~\ref{lem:D&MTB} and \eqref{def:M_special} imply  $\BM^{(a,a)} = \BM_{\TB}^{(a,a)} = -2\overline{\BX}_a\overline{\BX}_a^{\top}$. Therefore,   $z$ cannot be negative and
\begin{equation*}
z \I_{n_a} \succeq z \left(\I_{n_a} - \frac{1}{n_a}\BJ_{n_a\times n_a}\right)\succeq - \BM^{(a,a)} = 2 \overline{\BX}_a  \overline{\BX}_a^\top,
\end{equation*}
which gives $z \geq 2\max_{1\leq a\leq k}\|\overline{\BX}_a\|^2$.

\paragraph{Step Three:}
By applying $\BB \geq \bzero$ and $z\geq 2\max_{1\leq a\leq k}\|\overline{\BX}_a\|^2$ to~\eqref{eq:B1}, we get
\begin{equation*}
h^2_{a,b}\bone_{n_a} - 2h_{a,b}\bu_{a,b} 
\geq  \frac{z (n_a + n_b)}{2n_a n_b} \bone_{n_a} 
\geq \frac{\max\|\overline{\BX}_a\|^2(n_a + n_b)}{n_an_b}\bone_{n_a}.
\end{equation*}
Similarly, we have 
\begin{equation*}
h^2_{a,b}\bone_{n_b} - 2h_{a,b}\bu_{b,a} 
\geq \frac{\max\|\overline{\BX}_a\|^2(n_a + n_b)}{n_an_b}\bone_{n_b}.
\end{equation*}
Together they imply 
\begin{equation*}
h^2_{a,b} - 2h_{a,b}\tau_{a,b} \geq  \frac{\max\|\overline{\BX}_a\|^2(n_a + n_b)}{n_an_b},
\end{equation*}
where $\tau_{a,b} = \max\{ \max(\bu_{a,b}), \max(\bu_{b,a})\}$. 
 \end{proof}

\subsection{Proof of Proposition~\ref{prop:prox2}}\label{ss:prox2}
\begin{proof}[\bf Proof of Proposition~\ref{prop:prox2}: ]
It suffices to prove $\min_{1\leq i\leq n_a} \left\lag \bx_{a,i} - \frac{\bc_a + \bc_b}{2}, \bw_{b,a}\right\rag  = \frac{1}{2}h_{a,b} - \tau_{a,b}.$
For any $1 \leq i \leq n_a$, there holds
\begin{align*}
\left\lag \bx_{a,i} - \frac{\bc_a + \bc_b}{2}, \bw_{b,a} \right\rag 
& = \left\lag \bx_{a,i} - \bc_a + \frac{\bc_a - \bc_b}{2}, \bw_{b,a}\right\rag \\
& =  \lag \bx_{a,i} - \bc_a, \bw_{b,a} \rag + \frac{1}{2}\|\bc_a - \bc_b\| \\
& =  (\overline{\BX}_a \bw_{b,a} )_i + \frac{1}{2}\|\bc_a - \bc_b\| \\
& = -(\bu_{a,b})_i + \frac{1}{2}\|\bc_a - \bc_b\|.
\end{align*}
Similarly, for any $1 \leq j \leq n_b$, we have,
\begin{align*}
\left\lag \bx_{b,j} - \frac{\bc_a + \bc_b}{2}, \bw_{b,a} \right\rag = -(\bu_{b,a})_j + \frac{1}{2} \| \bc_a - \bc_b \|.
\end{align*}
Combining those two identities gives 
\begin{equation*}
\min_{a \neq b} \left\{ \frac{1}{2} h_{a,b} - \tau_{a,b} \right\} =  \min_{a \neq b} \min_{1 \leq i \leq n_a} \left\lag \bx_{a,i} - \frac{\bc_a + \bc_b}{2}, \bw_{b,a}\right\rag, 
\end{equation*}
which completes the proof.
 \end{proof}



\section{Proofs for Section~\ref{sec:balanced}} \label{sec:proof4balanced}

In this section, we provide concise proofs for Theorem~\ref{thm:main-bal} and Theorem~\ref{thm:lower-bal}. The proofs for the balanced case is parallel to the general case to a large extent. To avoid redundancy, we skip proofs and calculations that are basically the same as those in Section~\ref{sec:proof4det}. Also, we adopt similar notation as in Section~\ref{sec:proof4det} to emphasize the close relation between these two SDP relaxations of $k$-means.

\subsection{Proof of Theorem~\ref{thm:main-bal}}
Amini and Levina's relaxation is equivalent to the following optimization problem:
\begin{align}
\label{eq:prime_balanced}
\begin{split}
\min \quad &  \lag \BZ, \BD\rag \\
\mbox{s.t.} \quad &  \BZ \succeq 0, \quad \BZ \geq 0,  \quad \frac{1}{2} (\BZ +\BZ^\top) \bone_N = \bone_N,\quad \diag(\BZ) = \frac{1}{n}\bone_N.
\end{split}
\end{align}
In the standard form of a conic program, the optimization takes the form
\begin{equation}\label{eq:standard_balanced}
\min \quad \lag \BZ, \BD\rag, \quad \mbox{s.t.} \quad \A(\BZ) = \begin{bmatrix} \frac{1}{n} \bone_N \\ \bone_N  \end{bmatrix}, \quad \BZ \in {\cal K},
\end{equation}
where ${\cal K} = {\cal S}^{N}_+ \cap \RR^{N\times N}_{+}$ and the linear operator ${\cal A}$ is given by
\begin{equation*}
\A(\BZ) :  \quad \BZ \rightarrow \begin{bmatrix}
\diag (\BZ)  \\
\frac{1}{2}(\BZ+\BZ^\top)\bone_N
\end{bmatrix}.
\end{equation*}
Thus, it is effortless to derive the dual problem of Amini and Levina's relaxation using the duality theory of conic programming. The dual program reads
\begin{equation}\label{eq:dual_balanced}
\max \quad - \left\lag \frac{1}{n} \bz + \balpha, \bone_N\right\rag, \quad \mbox{s.t.} \quad \BD + \A^*\left( \blambda\right) \in {\cal K}^*,
\end{equation}
where $\blambda = \begin{bmatrix} \bz \\ \balpha \end{bmatrix}\in\RR^{2N}$ is the dual variable with respect to the affine constraints, ${\cal K}^* =  {\cal S}^{N}_+ + \RR^{N\times N}_{+}$ is the dual cone and
\begin{equation}\label{def:Aadj_balanced}
\A^*(\blambda) := \frac{1}{2}(\balpha\bone_N^{\top} + \bone_N \balpha^{\top}) + \diag (\bz)
\end{equation}
is the adjoint operator of $\A$ under the canonical inner product over $\RR^{N\times N}$, where $\diag(\bz)$ is the diagonal matrix whose diagonal is given by $\bz$.

We proceed to find the sufficient condition for $\BX = \sum_{a=1}^k \frac{1}{n} \bone_{\Gamma_a} \bone_{\Gamma_a}^\top$ to be the global minimum. Thanks to the conic duality theorem (Theorem~\ref{thm:conic}), we can prove the following lemma using the same construction as in Lemma~\ref{lem:slater}
\begin{lemma} \label{lem:slater_balanced}
$(\BX, \blambda)$ is a pair of primal/dual optima if and only  if the complementary slackness holds: $\lag \BD + \A^*(\blambda), \BX\rag = 0$ where $\BD +\A^*(\blambda)\in{\cal K}^*$.
\end{lemma}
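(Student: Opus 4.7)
The plan is to mirror the proof of Lemma~\ref{lem:slater} essentially verbatim, substituting the new affine constraints $\diag(\BZ) = \frac{1}{n}\bone_N$ in place of the trace constraint $\Tr(\BZ)=k$. The three ingredients one needs from the conic duality theorem (Theorem~\ref{thm:conic}) are: strict feasibility plus boundedness of the primal~\eqref{eq:prime_balanced}, the analogous pair of properties for the dual~\eqref{eq:dual_balanced}, and then one simply reads off the equivalence between optimality of $(\BX,\blambda)$ and the complementary slackness $\langle \BD+\A^*(\blambda),\BX\rangle=0$ with $\BD+\A^*(\blambda)\in{\cal K}^*$.

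For strict primal feasibility the natural candidate is $\widetilde{\BZ}=\frac{1-\lambda}{N}\bone_N\bone_N^\top+\lambda \I_N$ with $\lambda$ chosen so that the diagonal equals $\tfrac{1}{n}$, i.e.\ $\frac{1-\lambda}{N}+\lambda=\frac{1}{n}$, yielding $\lambda=\frac{k-1}{nk-1}\in(0,1)$ whenever $k\ge 2$. One verifies immediately that $\widetilde{\BZ}\succ 0$ and $\widetilde{\BZ}>0$ entrywise (so $\widetilde{\BZ}$ lies in the interior of ${\cal K}$), that $\widetilde{\BZ}\bone_N=\bone_N$, and that $\diag(\widetilde{\BZ})=\tfrac{1}{n}\bone_N$. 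Boundedness below by $0$ follows since $\BZ\ge 0$ and $\BD\ge 0$.

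For strict dual feasibility, take $\balpha=\bzero$ and $\bz=z_0\bone_N$ for a sufficiently large $z_0>0$; then $\A^*(\blambda)=z_0\I_N$ and
\begin{equation*}
\BD+\A^*(\blambda)\;=\;\underbrace{\bone_N\bone_N^\top}_{\text{interior of }\RR^{N\times N}_+}\;+\;\underbrace{\bigl(\BD+z_0\I_N-\bone_N\bone_N^\top\bigr)}_{\text{interior of }{\cal S}^N_+\text{ for large }z_0},
\end{equation*}
which puts this point in the interior of ${\cal K}^*$. Boundedness above is automatic since weak duality gives dual value $\le$ primal value.

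With both programs strictly feasible and bounded, part~3 of Theorem~\ref{thm:conic} applies directly, yielding the claimed equivalence between $(\BX,\blambda)$ being a primal/dual optimum pair and the complementary slackness $\langle \BD+\A^*(\blambda),\BX\rangle=0$, where $\BD+\A^*(\blambda)\in{\cal K}^*$. The only place that needs genuine care (as opposed to rote copying) is the strict primal feasibility construction, since the diagonal constraint is more rigid than the trace constraint; solving the one-parameter equation for $\lambda$ above handles this cleanly, so no real obstacle arises.
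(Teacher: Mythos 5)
Your proposal is correct and follows essentially the same route as the paper: the same strictly feasible primal point $\widetilde{\BZ}=\frac{1-\lambda}{N}\bone_N\bone_N^\top+\lambda\I_N$ (your $\lambda=\frac{k-1}{nk-1}$ coincides with the paper's $\frac{k-1}{N-1}$ since $N=nk$ in the balanced case), the same dual point $\balpha=\bzero$, $\bz=z_0\bone_N$, and the same appeal to the conic duality theorem. The only difference is that you make explicit the verification that the diagonal constraint pins down $\lambda$, which the paper leaves implicit.
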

\begin{proof}
It is easy to verify that $\widetilde{\BZ} = \frac{1-\lambda}{N} \bone_N \bone_N^\top + \lambda \I_N$ is strictly feasible for \eqref{eq:standard_balanced}, where $\lambda = \frac{k-1}{N-1} > 0$ for $k \geq 2$. As for the dual problem, we take $\balpha = \bzero$ and $\bz = z \bone_N$ where $z$ is a sufficiently large positive number, then$\BD + \A^*(\blambda) = \BJ_{N\times N} + \left( \BD + z\I_N - \BJ_{N\times N} \right)$ is inside the interior of ${\cal K}^*$.
\end{proof}

The task is to find $\bz$ and $\balpha$ such that the complementary slackness $\lag \BD + \A^*(\blambda), \BX\rag = 0$ is true. By definition, $\BD + \A^*\left( \blambda\right) = \BB + \BQ$, where $\BB \geq \bzero$ and $\BQ \succeq \bzero$. We choose $\bz$ such that 
\begin{equation*}
\bz_a = z_a \bone_n, \quad \forall 1 \leq a \leq k,
\end{equation*} 
where $z_1, \ldots, z_k$ are variables to be determined. In a similar fashion to Lemma~\ref{lem:comp_slack}, the complementary slackness gives
\begin{equation*}
\vct{\alpha}_a = -\frac{2}{n} \mtx{D}^{(a, a)} \vct{1}_n + \frac{1}{n^2} \langle \mtx{D}^{(a, a)}, \mtx{J}_{n\times n}\rangle \vct{1}_n - \frac{z_a}{n} \vct{1}_n.
\end{equation*}
As a result, matrix $\BB$ must satisfy
\begin{align*}
\BB^{(a,b)} > \bzero, \quad \BB^{(a,a)} = \bzero \quad \forall a\neq b.
\end{align*}
The matrix $\BQ$ is rewritten as
\begin{align} \label{eq:Q_balanced}
\BQ &= \BF + \BM - \BB,
\end{align}
where $\BM$ is defined the same as before:
\[
\mtx{M}^{(a, b)} = \mtx{D}^{(a, b)} - \frac{1}{n}\left[\mtx{D}^{(a, a)}\mtx{J}_{n\times n} + \mtx{J}_{n\times n} \mtx{D}^{(b, b)}\right] + \frac{1}{2n^2}\langle \mtx{D}^{(a, a)} + \mtx{D}^{(b, b)}, \mtx{J}_{n\times n} \rangle \mtx{J}_{n\times n}.
\]
and the matrix $\mtx{F}$ is given by:
\begin{align*}
\mtx{F}^{(a, b)} =  - \frac{z_a + z_b}{2n} \mtx{J}_{n\times n},  \quad \mtx{F}^{(a, a)} = z_a \left(\mtx{I}_n - \frac{1}{n} \mtx{J}_{n\times n} \right)  \quad  \forall a \neq b.
\end{align*}
Just the same as Proposition~\ref{thm:opt_no_unq}, the following optimality condition is not enough to guarantee that $\BX$ is a unique global minimum of \eqref{eq:prime_balanced}: $\BQ \succeq \bzero$ and $\BB \geq \bzero$ where $\BQ$ has the form of \eqref{eq:Q_balanced} and $\BB^{(a,a)} = \bzero$ for all $1 \leq a \leq k$. However, by following exactly the logic of the proof of Proposition~\ref{lem:optimal}, one can show its counterpart for the balanced case is still true:
\begin{proposition}[\bf A sufficient condition for the uniqueness of global minimum] \label{lem:optimal_balanced}
Any feasible pair of $\BQ \succeq \bzero$ and $\BB \geq \bzero$, where $\BQ$ has the form of \eqref{eq:Q_balanced}, $\BB^{(a,a)} = \bzero$ for all $1\leq a \leq k$, and $\BB^{(a,b)} > \bzero$ for all $a \neq b$, certifies $\BX$ to be a unique global minimum of \eqref{eq:prime}.
\end{proposition}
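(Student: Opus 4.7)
The plan is to mirror the proof of Proposition~\ref{lem:optimal} line by line, adapting only the bookkeeping that is specific to the diagonal constraint $\diag(\BZ)=\tfrac{1}{n}\bone_N$. First, the optimality condition for the balanced case already sketched above shows that the existence of any feasible pair $(\BQ,\BB)$ with the stated structure certifies $\BX = \sum_a \tfrac{1}{n} \bone_{\Gamma_a}\bone_{\Gamma_a}^\top$ as \emph{a} global minimizer of~\eqref{eq:prime_balanced}. It therefore remains to rule out a competing optimum $\widetilde{\BX}\neq \BX$ by showing $\langle \BD,\widetilde{\BX}-\BX\rangle>0$ for every feasible $\widetilde{\BX}$ distinct from $\BX$.

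To do so, I would write $\BQ = \BD + \A^*(\blambda) - \BB$ with $\A^*$ as in~\eqref{def:Aadj_balanced}, and observe that any feasible $\widetilde{\BX}$ satisfies the same affine constraints as $\BX$, namely $\A(\widetilde{\BX})=\A(\BX)=[\tfrac{1}{n}\bone_N^\top,\bone_N^\top]^\top$. Hence $\langle \A^*(\blambda),\widetilde{\BX}-\BX\rangle=\langle \blambda,\A(\widetilde{\BX}-\BX)\rangle=0$. Combined with $\BQ\BX=\bzero$ (inherited from the complementary-slackness step that justified the sufficient condition), this yields
\[
\langle \BQ,\widetilde{\BX}\rangle \;=\; \langle \BD,\widetilde{\BX}-\BX\rangle - \langle \BB,\widetilde{\BX}-\BX\rangle \;\geq\; 0.
\]
Since the supports of $\BB$ (off-diagonal blocks only) and $\BX$ (on-diagonal blocks only) are disjoint, $\langle \BB,\BX\rangle=0$, and thus $\langle \BD,\widetilde{\BX}-\BX\rangle\geq \langle \BB,\widetilde{\BX}\rangle\geq 0$.

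For strict positivity, I argue by contradiction: suppose $\langle \BB,\widetilde{\BX}\rangle=0$. Because $\BB^{(a,b)}>\bzero$ for every $a\neq b$ and $\widetilde{\BX}\geq\bzero$, this forces $\widetilde{\BX}^{(a,b)}=\bzero$ for all $a\neq b$, so $\widetilde{\BX}$ is block diagonal with PSD blocks $\widetilde{\BX}^{(a,a)}$. The row-sum constraint $\widetilde{\BX}\bone_N=\bone_N$ then yields $\widetilde{\BX}^{(a,a)}\bone_n=\bone_n$, so $\bone_n$ is an eigenvector with eigenvalue $1$; simultaneously the diagonal constraint $\diag(\widetilde{\BX}^{(a,a)})=\tfrac{1}{n}\bone_n$ pins $\Tr(\widetilde{\BX}^{(a,a)})=1$, forcing every remaining eigenvalue to vanish. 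Hence each $\widetilde{\BX}^{(a,a)}$ is rank one, and combining rank one with $\widetilde{\BX}^{(a,a)}\bone_n=\bone_n$ identifies it as $\tfrac{1}{n}\bone_n\bone_n^\top$, giving $\widetilde{\BX}=\BX$ and contradicting our standing assumption.

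The main obstacle, if one can even call it that, is purely notational: verifying that the cancellation $\langle \A^*(\blambda),\widetilde{\BX}-\BX\rangle=0$ remains clean under the new adjoint in~\eqref{def:Aadj_balanced} (which now carries a $\diag(\bz)$ piece rather than a $z\I_N$ piece), and that the rank-one pinning step above goes through with a diagonal constraint in place of the trace constraint of the general case. Both are transparent once written down, which is why the authors can afford to declare the proof parallel to that of Proposition~\ref{lem:optimal}.
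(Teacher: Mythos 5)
Your proposal is correct and follows essentially the same route as the paper, which simply declares the proof parallel to that of Proposition~\ref{lem:optimal}: certify optimality, reduce $\langle \BD,\widetilde{\BX}-\BX\rangle>0$ to $\langle\BB,\widetilde{\BX}\rangle>0$ via the shared affine constraints and $\langle\BQ,\BX\rangle=0$, then rule out $\langle\BB,\widetilde{\BX}\rangle=0$ by forcing $\widetilde{\BX}$ to be block diagonal and pinning each block to $\tfrac{1}{n}\bone_n\bone_n^\top$. Your only deviation — using $\diag(\widetilde{\BX}^{(a,a)})=\tfrac{1}{n}\bone_n$ to get a blockwise trace of $1$ instead of the global trace-$k$ eigenvalue count — is exactly the right local adaptation and, if anything, slightly cleaner.
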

By following the argument of Proposition~\ref{prop:eqv_opt}, we can transform the condition for the uniqueness of global minimum into a more useful form.
\begin{proposition}
The optimality condition with uniqueness in Proposition~\ref{lem:optimal_balanced} is equivalent to
\begin{align} \label{cond:suff_balanced}
\begin{cases}
\mtx{F}_{T^\perp} + \mtx{M}_{T^\perp} - \mtx{B}_{T^\perp} \succeq \mtx{0},
\\
\mtx{M}_T^{(a, b)} - \mtx{B}_T^{(a, b)} -\frac{z_a + z_b}{2n} \mtx{J}_n  = \mtx{0}, \quad \forall a \neq b,
\\
\mtx{B}^{(a,b)} = (\mtx{B}^{(b,a)})^\top, \quad \mtx{B}^{(a,a)} = \mtx{0}, \quad \mtx{B}^{(a, b)} > \mtx{0}, \quad \forall a \neq b.
\end{cases}
\end{align}
\end{proposition}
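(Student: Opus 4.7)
The plan is to adapt the proof of Proposition~\ref{prop:eqv_opt} to the balanced setting, with the matrix $z(\I_N - \BE)$ from the general case replaced by $\BF$. The argument splits into the two implications, and in each direction the key is to decompose $\BQ = \BF + \BM - \BB$ along the orthogonal summands $T$ and $\TB$ using the projection formulas recalled just before Proposition~\ref{prop:eqv_opt}, specialized to the balanced case $n_a = n$.

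For the forward direction I would first verify that $\lag \BQ, \BX\rag = 0$ holds automatically. Block by block, $\lag \BF^{(a,a)}, \BX^{(a,a)}\rag = \frac{z_a}{n}\bigl(\lag \I_n, \BJ_{n\times n}\rag - \frac{1}{n}\lag \BJ_{n\times n}, \BJ_{n\times n}\rag\bigr) = \frac{z_a}{n}(n - n) = 0$; the contribution of $\BM^{(a,a)}$ vanishes because $\BM^{(a,a)}\bone_n = \bzero$ by \eqref{def:M_special}; and $\BB^{(a,a)} = \bzero$ by assumption. Combined with $\BQ \succeq \bzero$ and $\BX \succeq \bzero$, this forces $\BQ\BX = \BX\BQ = \bzero$, equivalently $\BQ_T = \bzero$ and $\BQ = \BQ_{\TB}$. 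The first condition of \eqref{cond:suff_balanced} then reads $\BQ_{\TB} = \BF_{\TB} + \BM_{\TB} - \BB_{\TB} \succeq \bzero$. For the second condition, I would observe that any matrix proportional to $\BJ_{n\times n}$ is fixed by the $T$-projection (since $\BJ_{n\times n}\BJ_{n\times n} = n\BJ_{n\times n}$), so $\BF_T^{(a,b)} = \BF^{(a,b)} = -\frac{z_a + z_b}{2n}\BJ_{n\times n}$ for $a\neq b$; imposing $\BQ_T^{(a,b)} = \bzero$ then yields the stated identity. The third condition is immediate.

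For the reverse direction I would use \eqref{cond:suff_balanced} to reconstruct the certificate. The third condition directly supplies $\BB \geq \bzero$, $\BB^{(a,a)} = \bzero$, and $\BB^{(a,b)} > \bzero$ for $a \neq b$. To verify $\BQ \succeq \bzero$, split $\BQ = \BQ_T + \BQ_{\TB}$: the $\TB$ part equals the expression in the first condition and is therefore $\succeq \bzero$, while the $T$ part vanishes blockwise. On diagonal blocks $(a,a)$ the three summands $\BF^{(a,a)} = z_a(\I_n - \frac{1}{n}\BJ_{n\times n})$, $\BM^{(a,a)}$ (lying in $\TB$ by \eqref{def:M_special}), and $\BB^{(a,a)} = \bzero$ all sit in $\TB$, so nothing projects onto $T$; on off-diagonal blocks $(a,b)$ with $a\neq b$ the second condition yields $\BQ_T^{(a,b)} = \bzero$ directly. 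Hence $\BQ = \BQ_{\TB} \succeq \bzero$, completing the equivalence. The only real bookkeeping subtlety is confirming that $\BF^{(a,a)}, \BM^{(a,a)} \in \TB$ and that $\BF^{(a,b)} \in T$ for $a\neq b$, all of which follow in one line from the explicit projection formulas; beyond that the argument is a direct transcription of Proposition~\ref{prop:eqv_opt}.
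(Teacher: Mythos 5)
Your proof is correct and takes essentially the same route as the paper, which itself gives no separate argument here but simply defers to the proof of Proposition~\ref{prop:eqv_opt}; your write-up is a faithful transcription of that argument to the balanced case with $z(\I_N-\BE)$ replaced by $\BF$, and the block-wise checks ($\langle \BF^{(a,a)},\BX^{(a,a)}\rangle=0$, $\BF^{(a,a)},\BM^{(a,a)}\in \TB$, $\BF^{(a,b)}\in T$ for $a\neq b$) are all accurate.
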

Here, $T$ and $T^\perp$ are subspaces of $\RR^{N \times N}$ defined in Section~\ref{sec:suff_dual_cert}. The only free variables remained in \eqref{cond:suff_balanced} are $z_a$ and $\BB^{(a,b)}_{\TB}$. We choose them as
\begin{align} \label{eq:construct_balanced}
z_a = 2k \| \overline{\BX}_a \|^2, \quad \BB^{(a,b)}_{\TB} = 4 \bu_{a,b} \bu_{b,a}^\top, \quad \forall a \neq b.
\end{align}
Now we show that with such a construction leads to Theorem~\ref{thm:main-bal}.  In fact, Theorem~\ref{thm:main-bal} follows immediately from the proposition below as an implication of Proposition~\ref{lem:optimal_balanced}.
\begin{proposition}
Assume the proximity condition for balanced clusters \eqref{eq:prox-bal} holds for the partition $\{ \Gamma_a \}_{a=1}^k$. We can choose $z_a$ and $\BB$ such that both the sufficient condition \eqref{cond:suff_balanced} and \eqref{eq:construct_balanced} are satisfied.
\end{proposition}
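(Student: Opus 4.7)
The plan is to verify the three bullets of the sufficient condition \eqref{cond:suff_balanced} one by one under the construction \eqref{eq:construct_balanced}, mirroring the structure of the proof of Proposition~\ref{prop:main} but exploiting the block-diagonal shape of $\mtx{F}$ in the balanced setting. The second bullet is automatic: it is the defining relation for $\mtx{B}_T^{(a,b)}$, so our freedom is only in $z_a$ and $\mtx{B}_{T^\perp}^{(a,b)}$, exactly the two quantities we prescribed. Symmetry and $\mtx{B}^{(a,a)}=\mtx{0}$ in the third bullet will follow from $\bu_{a,b}\bu_{b,a}^\top$ being the transpose of $\bu_{b,a}\bu_{a,b}^\top$ and from $\mtx{M}_T^{(a,a)}=\mtx{0}$ together with $\mtx{F}_{T^\perp}^{(a,a)}$ absorbing the diagonal block of $\mtx{F}+\mtx{M}$ cleanly.

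For the PSD condition $\mtx{F}_{T^\perp}+\mtx{M}_{T^\perp}-\mtx{B}_{T^\perp}\succeq \mtx{0}$, the key observation is that $\mtx{F}^{(a,b)}$ is a scalar multiple of $\mtx{J}_{n\times n}$ when $a\neq b$, hence its $T^\perp$-projection vanishes, while the diagonal block $z_a(\mtx{I}_n-\frac{1}{n}\mtx{J}_n)$ already lies in $T^\perp$. Using Lemma~\ref{lem:D&MTB} and the construction of $\mtx{B}_{T^\perp}^{(a,b)}=4\bu_{a,b}\bu_{b,a}^\top$, I would factor $\mtx{B}_{T^\perp}-\mtx{M}_{T^\perp}=2\widehat{\mtx{X}}\mtx{W}\widehat{\mtx{X}}^\top$ exactly as in Lemma~\ref{lem:operator-bound}, with $\widehat{\mtx{X}}$ block-diagonal of blocks $\overline{\mtx{X}}_a$ and $\mtx{W}$ a block matrix whose blocks are orthogonal (identities on the diagonal, Householder reflections $\mtx{I}_m-2\bw_{a,b}\bw_{a,b}^\top$ off-diagonal). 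Testing against an arbitrary $\by$ with blocks $\by_a$ and writing $\bar\by_a=(\mtx{I}_n-\frac1 n\mtx{J}_n)\by_a$, one bounds
\[
\by^\top(\mtx{B}_{T^\perp}-\mtx{M}_{T^\perp})\by \leq 2\Bigl(\sum_{a=1}^k \|\overline{\mtx{X}}_a\|\,\|\bar\by_a\|\Bigr)^2 \leq 2k\sum_{a=1}^k \|\overline{\mtx{X}}_a\|^2\|\bar\by_a\|^2
\]
by Cauchy--Schwarz, and the right-hand side is exactly $\by^\top\mtx{F}_{T^\perp}\by$ for the choice $z_a=2k\|\overline{\mtx{X}}_a\|^2$. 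This is the one spot where the factor $k$ is unavoidable, but crucially it enters only through the \emph{diagonal} term, so later it will recombine with only the pair $(a,b)$.

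For the positivity $\mtx{B}^{(a,b)}>\mtx{0}$, I will combine the defining formula $\mtx{B}_T^{(a,b)}=\mtx{M}_T^{(a,b)}-\frac{z_a+z_b}{2n}\mtx{J}_n$ with the explicit form of $\mtx{M}_T^{(a,b)}$ from Lemma~\ref{lem:MT} and with $\mtx{B}_{T^\perp}^{(a,b)}=4\bu_{a,b}\bu_{b,a}^\top$. Writing the $(i,j)$-entry and setting $p=-(\bu_{a,b})_i$, $q=-(\bu_{b,a})_j$, the condition becomes $g(p,q):=h_{a,b}^2+2h_{a,b}(p+q)+4pq>\frac{z_a+z_b}{2n}$ for all $p,q\geq -\tau_{a,b}$. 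The proximity condition \eqref{eq:sdp-bal2} guarantees $h_{a,b}>2\tau_{a,b}$, whence $\partial_p g$ and $\partial_q g$ are nonnegative on the admissible region, so the minimum of $g$ is attained at the corner $p=q=-\tau_{a,b}$, giving $g_{\min}=(h_{a,b}-2\tau_{a,b})^2$. Plugging in $z_a+z_b=2k(\|\overline{\mtx{X}}_a\|^2+\|\overline{\mtx{X}}_b\|^2)$, the required inequality reduces to $(h_{a,b}-2\tau_{a,b})^2>\frac{k(\|\overline{\mtx{X}}_a\|^2+\|\overline{\mtx{X}}_b\|^2)}{n}$, which is exactly \eqref{eq:sdp-bal2}.

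The main obstacle—and the reason the condition can be fully localized in the first place—is balancing the two competing uses of $z_a$: making it large enough that the PSD bullet holds (which cost us the factor $k$ above) while keeping $(z_a+z_b)/(2n)$ small enough that the positivity bullet only involves clusters $a$ and $b$. The symmetric choice $z_a=2k\|\overline{\mtx{X}}_a\|^2$ achieves both because the PSD analysis allows cluster-wise diagonal weights $z_a$ rather than a single global $z$ as in the Peng--Wei case; this asymmetry between the two relaxations is precisely what allows the $\|\overline{\mtx{W}}\|^2=\|\sum_a\overline{\mtx{X}}_a^\top\overline{\mtx{X}}_a\|$ term in the general proximity condition to be replaced by the purely pairwise quantity $\|\overline{\mtx{X}}_a\|^2+\|\overline{\mtx{X}}_b\|^2$.
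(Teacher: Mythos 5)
Your proposal is correct and follows essentially the same route as the paper: the same choice $z_a=2k\|\overline{\BX}_a\|^2$, $\BB_{\TB}^{(a,b)}=4\bu_{a,b}\bu_{b,a}^\top$, the same factorization $\BB_{\TB}-\BM_{\TB}=2\widehat{\BX}\BW\widehat{\BX}^\top$ with the Cauchy--Schwarz step producing the factor $k$, and the same corner-minimization of the bilinear entrywise function reducing positivity of $\BB^{(a,b)}$ to $(h_{a,b}-2\tau_{a,b})^2>\frac{k}{n}(\|\overline{\BX}_a\|^2+\|\overline{\BX}_b\|^2)$. The only cosmetic difference is that you verify the PSD bullet by testing the quadratic form against centered blocks $\bar{\by}_a$ directly, whereas the paper passes through the operator chain $\BW\preceq k\I_{mk}$, $2k\widehat{\BX}\widehat{\BX}^\top\preceq\BG$, and then projects onto $\TB$.
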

\begin{proof}
It remains to prove $\BB^{(a,b)} > \bzero$ for all $a \neq b$ and $\BF_{\TB} + \BM_{\TB} - \BB_{\TB} \succeq \bzero$.
Notice that for all $a \neq b$
\[
\begin{cases}
\mtx{B}_T^{(a, b)} = - \frac{z_a + z_b}{2n} \mtx{J}_{n\times n} + \mtx{M}_T^{(a, b)},
\\
\mtx{B}_{T^\perp}^{(a, b)} = 4 \vct{u}_{a, b} \vct{u}_{b, a}^\top,
\end{cases}
\]
where $\BM^{(a,b)}_T$ is given by Lemma~\ref{lem:MT}. Then 
\[
\mtx{B}^{(a, b)} = 4 \vct{u}_{a, b} \vct{u}_{b, a}^\top  + \left( -\frac{z_a + z_b}{2n} + h_{a, b}^2 \right) \mtx{J}_{n\times n} - 2h_{a, b} ( \vct{u}_{a, b} \vct{1}_n^\top + \vct{1}_n \vct{u}_{b, a}^\top) .
\]
As with the proof of \eqref{eq:positivity} in Section~\ref{sec:proof_main}, it suffices to require
\[
h_{a, b} - 2 \tau_{a, b} > \sqrt{\frac{z_a + z_b}{2n}} = \sqrt{\frac{k}{n}\left(\|\overline{\mtx{X}}_a\|^2 + \|\overline{\mtx{X}}_b\|^2\right)},
\]
which is equivalent to the proximity condition for balanced clusters thanks to Proposition~\ref{prop:prox2}.

Next we show $\BF_{\TB} \succeq \BB_{\TB} - \BM_{\TB}$. Based on the proof of Lemma~\ref{lem:operator-bound}, we have $\BM_{\TB}^{(a,b)}= -2\overline{\BX}_a\overline{\BX}_b^{\top}$. Hence, $ \BB_{\TB} - \BM_{\TB} = 2 \widehat{\BX} \BW \widehat{\BX}^\top$, where $\widehat{\BX} \in \RR^{N \times mk}$ and $\BW \in \RR^{mk \times mk}$ are given by
\begin{align*}
\begin{cases}
\widehat{\BX}^{(a,b)} = \bzero, \quad \widehat{\BX}^{(a,a)} = \overline{\BX}_a, \quad & \forall a \neq b, \\
\BW^{(a,b)} = \I_m -2\bw_{a,b}\bw_{a,b}^{\top}, \quad \BW^{(a,a)} = \I_m, \quad & \forall a \neq b.
\end{cases}
\end{align*}
          Note that each $\BW^{(a,b)}$ is an orthogonal matrix and thus $\|\BW^{(a,b)}\| = 1.$ Let $\by \in \RR^N$ be a unit vector, and denote by $\by_a = \{ y_i \}_{i\in \Gamma_a}$, $1\leq a\leq k$. There holds,
\begin{equation*}
\by^{\top} \BW \by
\leq  \sum_{a=1}^k\sum_{b=1}^k \by^{\top}_a \BW^{(a,b)} \by_b^{\top} 
\leq \left( \sum_{l=1}^k \|\by_l\| \right)^2
\leq k \left( \sum_{l=1}^k \|\by_l\|^2 \right)
= k.
\end{equation*} 
This implies $\mtx{W} \preceq k \mtx{I}_{mk}$, which further implies
\begin{equation} \label{eq:oper_inq_balanced}
\mtx{B}_{\TB} - \mtx{M}_{\TB} 
\preceq 
2k \widehat{\BX} \widehat{\BX}^\top
\preceq 
\BG,
\end{equation}
where $\BG$ stands for
\begin{equation*}
\BG^{(a,b)} = \bzero, \quad \BG^{(a,a)} = z_a \I_n, \quad \forall a \neq b.
\end{equation*}
By the definition of $\BF$, it is easy to verify that $\BF_{\TB} = \BG_{\TB}$. Applying ${\cal P}_{\TB}$ to both sides of \eqref{eq:oper_inq_balanced} yields
\begin{equation*}
\BB_{\TB} - \BM_{\TB} \preceq \BF_{\TB}.
\end{equation*}
\end{proof}

\subsection{Proof of Theorem~\ref{thm:lower-bal}}
Lemma~\ref{lem:D&MTB} and \eqref{def:M_special} imply  $\BM^{(a,a)} = \BM_{\TB}^{(a,a)} = -2\overline{\BX}_a\overline{\BX}_a^{\top}$.
Since $\BQ \succeq \bzero$, $\BQ^{(a,a)} \succeq \bzero$ for any $a$. Using \eqref{eq:Q_balanced}, we have
\begin{equation*}
\BQ^{(a,a)} = \BF^{(a,a)} + \BM^{(a,a)} - \BB^{(a,a)} = z_a \left(\I_n - \frac{1}{n} \BJ_{n \times n} \right) - 2 \overline{\BX}_a \overline{\BX}_a^\top \succeq \bzero.
\end{equation*}
Thus,
\begin{equation*}
z_a \I_n \succeq  z_a \left(\I_n - \frac{1}{n} \BJ_{n \times n} \right) \succeq 2 \overline{\BX}_a \overline{\BX}_a^\top,
\end{equation*}
which gives $z_a \geq 2 \| \overline{\BX}_a \|^2$.
According to Lemma~\ref{lem:MT}, there holds
\begin{equation*}
\BM_T^{(a,b)} = h^2_{a,b}\BJ_{n \times n} - 2h_{a,b} \bu_{a,b}\bone_{n}^{\top} - 2h_{a,b} \bone_{n} \bu_{b,a}^{\top},
\end{equation*}
since for the balanced case $n_a = n$ for any $a$. Hence,
\begin{equation*}
\BM^{(a,b)} \bone_n = \BM_T^{(a,b)} \bone_n = n (h^2_{a,b} \bone_n - 2 h_{a,b} \bu_{a,b}).
\end{equation*}
On the other hand, by \eqref{cond:suff_balanced}, we have
\begin{equation*}
\BM^{(a,b)} \bone_n = \BM_T^{(a,b)} \bone_n = \BB^{(a,b)} \bone_n - \frac{z_a + z_b}{2} \bone_n.
\end{equation*}
Combining the above two equations with the fact that $\BB \geq \bzero$, we obtain the following estimation
\begin{equation*}
n (h^2_{a,b} \bone_n - 2 h_{a,b} \bu_{a,b}) = \BB^{(a,b)} \bone_n + \frac{z_a + z_b}{2} \bone_n \geq ( \| \overline{\BX}_a \|^2 +  \| \overline{\BX}_b \|^2) \bone_n.
\end{equation*}
This is equivalent to
\[h^2_{a,b} - 2 h_{a,b} \tau_{a,b} \geq \frac{ (\| \overline{\BX}_a \|^2 +  \|\overline{\BX}_b \|^2)}{n}.\]



\section{Proofs for Section~\ref{sec:sbm_and_gmm} }\label{sec:pf_rand}
In this section,  we apply the deterministic guarantee to  two typical random models  and prove Corollaries~\ref{cor:rbm} and~\ref{cor:gmm}. Each of the two models inherits a partition structure from how the data are sampled, which gives a ground truth of the underlying clusters.
We will discuss the sufficient condition for  the  exact recovery of the Peng-Wei relaxation based on the minimal separation between cluster centers.

\subsection{Key lemmas}
The main mathematical tools for the analysis are various concentration inequalities of random matrices as discussed in~\cite{Ver12} and~\cite{Tropp12}.
\begin{theorem}[\bf Matrix Bernstein inequality, Theorem 1.6 in~\cite{Tropp12}]\label{thm:bern}
Let $\{ \BZ_i \}_{i=1}^n$ be a sequence of real $d_1\times d_2$ random matrices. Assume that
\begin{equation*}
\mathbb{E} \BZ_i = 0, \quad \| \BZ_i \| \leq R, \quad \forall \, 1\leq i\leq n.
\end{equation*}
Consider the sum $\BS = \sum_{i=1}^n \BZ_i$, and denote
\begin{equation*}
\sigma^2(\BS) = \max \left\{ \left\| \sum_{i=1}^n \mathbb{E} [\BZ_i \BZ_i^{\top}] \right\|, \left\| \sum_{i=1}^n \mathbb{E} [\BZ_i^{\top} \BZ_i] \right\| \right\}.
\end{equation*}
Then for all $t \geq 0$,
\begin{equation*}
\Pr \left( \| \BS \| \geq t \right) \leq (d_1 + d_2) \cdot \exp \left( \frac{-t^2}{2\sigma^2(\BS) + 2Rt/3} \right).
\end{equation*}
\end{theorem}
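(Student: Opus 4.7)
The plan is to follow the matrix Laplace transform method, which is the standard route for proving such concentration inequalities; this is essentially the strategy in Ahlswede--Winter, sharpened by Tropp using Lieb's concavity theorem. First, I would reduce to the self-adjoint case by Hermitian dilation: replacing each $\BZ_i$ by $\tilde{\BZ}_i = \begin{pmatrix} 0 & \BZ_i \\ \BZ_i^\top & 0 \end{pmatrix}$ of size $(d_1+d_2)\times(d_1+d_2)$, one has $\|\tilde{\BZ}_i\|=\|\BZ_i\|$ and $\|\tilde{\BS}\|=\|\BS\|=\lambda_{\max}(\tilde{\BS})$. Moreover $\mathbb{E}\tilde{\BZ}_i^2$ is block diagonal with blocks $\mathbb{E}\BZ_i\BZ_i^\top$ and $\mathbb{E}\BZ_i^\top\BZ_i$, so $\|\sum_i \mathbb{E}\tilde{\BZ}_i^2\| = \sigma^2(\BS)$. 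Hence it suffices to establish a one-sided bound $\Pr(\lambda_{\max}(\tilde{\BS})\geq t)$ in the Hermitian setting.

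Second, I would apply the matrix Markov inequality: for any $\theta > 0$,
\[
\Pr\bigl(\lambda_{\max}(\tilde{\BS})\geq t\bigr) \;\leq\; e^{-\theta t}\,\mathbb{E}\tr\exp(\theta \tilde{\BS}).
\]
The crucial step is then to split the trace exponential of the sum. Unlike the scalar case, $e^{A+B}\neq e^Ae^B$, but Lieb's concavity theorem (that $\BH\mapsto \tr\exp(\BK+\log\BH)$ is concave on positive definite $\BH$) combined with Jensen's inequality yields the subadditivity bound
\[
\mathbb{E}\tr\exp\Bigl(\sum_i \theta\tilde{\BZ}_i\Bigr) \;\leq\; \tr\exp\Bigl(\sum_i \log\mathbb{E}\,e^{\theta\tilde{\BZ}_i}\Bigr).
\]
Third, I would control each matrix MGF. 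Using the bound $\|\tilde{\BZ}_i\|\leq R$, $\mathbb{E}\tilde{\BZ}_i=\mathbf{0}$, and the elementary inequality $e^{\theta x}\leq 1+\theta x + \frac{e^{\theta R}-1-\theta R}{R^2}x^2$ for $|x|\leq R$ (applied functionally to the spectrum), I would derive
\[
\mathbb{E}\,e^{\theta\tilde{\BZ}_i} \;\preceq\; \mathbf{I} + \frac{e^{\theta R}-1-\theta R}{R^2}\,\mathbb{E}\tilde{\BZ}_i^{\,2} \;\preceq\; \exp\!\left(\frac{e^{\theta R}-1-\theta R}{R^2}\,\mathbb{E}\tilde{\BZ}_i^{\,2}\right),
\]
where the last step uses $\mathbf{I}+\BA\preceq e^{\BA}$ for $\BA\succeq\mathbf{0}$, and monotonicity of $\log$ on commuting positive matrices.

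Fourth, substituting back and using $\tr\exp\preceq (d_1+d_2)\cdot\lambda_{\max}(\exp(\cdot))$, I would obtain
\[
\Pr\bigl(\|\BS\|\geq t\bigr)\;\leq\;(d_1+d_2)\,\exp\!\left(-\theta t + \frac{e^{\theta R}-1-\theta R}{R^2}\,\sigma^2(\BS)\right),
\]
and then optimize by choosing $\theta = R^{-1}\log(1+Rt/\sigma^2(\BS))$, which after some algebra and the convenient relaxation $\frac{e^u-1-u}{u^2/2}$ against a linear denominator yields exactly the Bernstein form $\exp(-t^2/(2\sigma^2(\BS)+2Rt/3))$. The main obstacle is the Lieb-based subadditivity step: this is the substantive ingredient that replaces the scalar identity $\mathbb{E}e^{\theta\sum X_i}=\prod \mathbb{E}e^{\theta X_i}$, and without it one could only obtain an Ahlswede--Winter style bound that is suboptimal in the variance parameter $\sigma^2(\BS)$; everything else is essentially scalar Bernstein reasoning lifted to matrices via the functional calculus.
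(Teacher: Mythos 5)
The paper does not prove this statement at all: it is imported verbatim as Theorem 1.6 of the cited reference \cite{Tropp12} and used as a black box, so there is no in-paper proof to compare against. Your sketch is a correct outline of the standard argument from that source (Hermitian dilation, matrix Laplace transform, Lieb/Jensen subadditivity of the matrix cumulant generating function, the Bernstein MGF bound, and the optimization $\theta = R^{-1}\log(1+Rt/\sigma^2)$ followed by the relaxation of $(1+u)\log(1+u)-u$). One small imprecision: the step $\log \mathbb{E}\, e^{\theta \tilde{\BZ}_i} \preceq g(\theta)\,\mathbb{E}\tilde{\BZ}_i^2$ requires the operator monotonicity of the logarithm for general (not necessarily commuting) positive definite matrices, together with the monotonicity of $\BA \mapsto \operatorname{Tr} e^{\BA}$ in the semidefinite order; both are standard but should be invoked as such rather than as facts about commuting matrices.
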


\begin{lemma}[\bf Generalized stochastic ball model]\label{lem:rbm}
Let $\{ \ba_i \}_{i=1}^n$ be a sequence of i.i.d. random vectors in $\RR^m$ and assume each $\ba_i$ is a zero mean vector supported on the unit ball in $\RR^m$ with the  covariance matrix  given by $\bSigma$.
\begin{enumerate}
\item
Denote $\overline{\ba} = \frac{1}{n} \sum_{i=1}^n \ba_i$.  We have
\begin{equation}\label{eq:rbm1}
\Pr( \|\overline{\ba}\| \geq t) \leq (m+1) \cdot \exp\left(- \frac{nt^2}{2 + 2t/3} \right).
\end{equation}
\item 
Let $\BA$ be an $n \times m$ matrix whose $i$-th row is $\ba_i^{\top}$. Then 
\begin{equation}\label{eq:rbm2}
\Pr (\| \BA \| \geq   \sqrt{n(\|\bSigma\| + t)} ) \leq 2m\exp\left( -\frac{nt^2}{2 + 4t/3} \right).
\end{equation}
\end{enumerate}
\end{lemma}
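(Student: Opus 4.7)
The plan is to obtain both bounds as direct consequences of the Matrix Bernstein inequality (Theorem~\ref{thm:bern}), applied in each part to an appropriate centered matrix-valued sum. The support assumption $\|\ba_i\|\leq 1$ will do most of the work in controlling both the uniform bound $R$ and the matrix variance $\sigma^2(\BS)$.

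For part one (\eqref{eq:rbm1}), the natural choice is $\BZ_i=\ba_i$, viewed as an $m\times 1$ matrix, so that $\BS=\sum_{i=1}^n\BZ_i=n\overline{\ba}$. The mean is zero by assumption, the uniform bound is $\|\BZ_i\|=\|\ba_i\|\leq 1$, so $R=1$. For the variance parameters, $\sum_i\E[\BZ_i\BZ_i^\top]=n\bSigma$ and $\sum_i\E[\BZ_i^\top\BZ_i]=n\,\E\|\ba_i\|^2\leq n$; since $\|\bSigma\|\leq\E\|\ba_i\|^2\leq 1$, this gives $\sigma^2(\BS)\leq n$. Plugging $s=nt$ and $d_1+d_2=m+1$ into Matrix Bernstein yields exactly \eqref{eq:rbm1} after the algebraic simplification $(nt)^2/(2n+2\cdot nt/3)=nt^2/(2+2t/3)$.

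For part two (\eqref{eq:rbm2}), I would first reduce the spectral norm of $\BA$ to the spectral norm of the symmetric sum $\BA^\top\BA=\sum_{i=1}^n\ba_i\ba_i^\top$. By the triangle inequality $\|\BA\|^2=\|\BA^\top\BA\|\leq n\|\bSigma\|+\|\BA^\top\BA-n\bSigma\|$, so the event $\{\|\BA\|\geq\sqrt{n(\|\bSigma\|+t)}\}$ is contained in $\{\|\BA^\top\BA-n\bSigma\|\geq nt\}$. I then apply Matrix Bernstein to the centered summands $\BZ_i=\ba_i\ba_i^\top-\bSigma$, which are symmetric $m\times m$ matrices with zero mean. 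The uniform bound uses $\|\BZ_i\|\leq\|\ba_i\|^2+\|\bSigma\|\leq 2$, so $R=2$, which produces the $2R/3=4/3$ coefficient in the denominator of \eqref{eq:rbm2}.

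The one step that requires a little care is bounding $\sigma^2(\BS)$ for the centered product. Here I would expand $\E[\BZ_i^2]=\E[\|\ba_i\|^2\ba_i\ba_i^\top]-\bSigma^2$, and exploit $\|\ba_i\|^2\leq 1$ in the Loewner order to get $\E[\|\ba_i\|^2\ba_i\ba_i^\top]\preceq\bSigma$, so $\E[\BZ_i^2]\preceq\bSigma-\bSigma^2\preceq\bSigma$ and $\sigma^2(\BS)\leq n\|\bSigma\|\leq n$. With $s=nt$ and $d_1+d_2=2m$, Matrix Bernstein then delivers \eqref{eq:rbm2}. I do not anticipate a real obstacle: the entire argument is essentially a choice-of-summands exercise, and the main subtlety is the order-theoretic bound on $\E[\BZ_i^2]$ which makes the denominator $2\|\bSigma\|+4t/3$ degrade cleanly to $2+4t/3$.
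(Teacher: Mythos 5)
Your proposal is correct and matches the paper's proof essentially step for step: part one is Matrix Bernstein applied to the vectors $\ba_i$ with $R=1$ and $\sigma^2\leq n$ (via $\|\bSigma\|\leq\Tr(\bSigma)=\E\|\ba_i\|^2\leq 1$), and part two is Matrix Bernstein applied to $\BZ_i=\ba_i\ba_i^\top-\bSigma$ with $R=2$ and the same Loewner-order bound $\E[\BZ_i^2]=\E[\|\ba_i\|^2\ba_i\ba_i^\top]-\bSigma^2\preceq\bSigma$, followed by the reduction $\|\BA\|^2\leq n\|\bSigma\|+\|\sum_i\BZ_i\|$. No gaps.
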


\begin{proof}
Note that the distribution of each $\ba_i$ is supported on the unit ball with the covariance matrix given by $\bSigma$. Thus,
\begin{equation*}
\sigma^2\left(\sum_{i=1}^n\ba_i\right) =n\max\{ \| \bSigma\|, \Tr(\bSigma)\} \leq n,
\end{equation*}
which follows from $\|\E(\ba_i\ba_i^{\top})\| = \|\bSigma\|$ and $\|\E(\ba_i^{\top}\ba_i)\| = \Tr(\bSigma) \leq 1$. Moreover, there holds $\|\ba_i\| \leq 1$ and thus $R= \max_{1\leq i\leq n}\|\ba_i\|=1.$
Therefore, applying Theorem~\ref{thm:bern} immediately results in
\begin{align*}
\Pr(\|\overline{\ba}\| \geq t) & \leq (m+1) \cdot \exp\left(- \frac{nt^2}{2 + 2t/3} \right).
\end{align*}

For the second part, first note that $\|\BA\|^2 = \|\BA^{\top}\BA\| = \left\| \sum_{i=1}^n \ba_i\ba_i^{\top}\right\|$. Let $\BZ_i = \ba_i\ba_i^{\top} - \bSigma$ be a centered random matrix and its operator norm is controlled by
\begin{equation*}
R = \max_{1\leq i\leq n}\|\BZ_i\| \leq \max_{1\leq i\leq n}\|\ba_i\|^2 + \|\bSigma\| \leq 2.
\end{equation*}
For the variance of $\BZ_i$, since $\E(\BZ_i\BZ_i^{\top}) = \E(\BZ_i^{\top}\BZ_i) = \E(\|\ba_i\|^2\ba_i\ba_i^{\top}) - \bSigma^2$, we have $-\bSigma^2 \preceq \E(\BZ_i\BZ_i^{\top}) \preceq \bSigma$. Therefore,
\begin{equation*}
\|\E(\BZ_i\BZ_i^{\top})\| \leq \max\{\|\bSigma\|^2, \|\bSigma\|\} = \|\bSigma\| \leq 1
\end{equation*}
and $\sigma^2(\sum_{i=1}^n \BZ_i) \leq n.$ 
Applying Theorem~\ref{thm:bern} again gives
\begin{align*}
\Pr\left( \left\| \sum_{i=1}^n \BZ_i \right\| \geq n t \right) & \leq 2m \cdot \exp\left(-\frac{n^2t^2}{ 2\sigma^2(\BS)+ 2Rnt/3} \right) \\
& \leq 2m \cdot \exp\left( -\frac{ nt^2}{2 + 4t/3} \right).
\end{align*} 
Therefore, since $\|\BA\|^2 \leq \|\sum_{i=1}^n \BZ_i\| + n\|\bSigma\|$, we have
\begin{equation*}
\|\BA\| \leq \sqrt{n(\|\bSigma\| + t)}  
\end{equation*}
with probability at least $1 -2m\exp\left( -\frac{nt^2}{2 + 4t/3} \right).$

 \end{proof}

\begin{lemma}[\bf Gaussian mixture model]\label{lem:gmm}
Let $\{ \ba_i \}_{i=1}^n$ be a sequence of i.i.d. random vectors in $\RR^m$ sampled from multivariate Gaussian distribution $\mathcal{N}(\boldsymbol{0},\bSigma)$.
\begin{enumerate}
\item Denote $\overline{\ba} = \frac{1}{n} \sum_{i=1}^n \ba_i$. There holds
\begin{equation}\label{eq:gmm-1}
\Pr\left( \|\overline{\ba}\| \geq \sqrt{\frac{m(1+t)\|\bSigma\|}{n}}\right) \leq \max \{e^{-mt/8}, e^{-mt^2/8} \}, \quad \forall t\geq 0.
\end{equation}
\item Let $\BA$ be $n \times m$ matrix whose $i$-th row is $\ba_i^{\top}$, then for any $t \geq 0$
\begin{equation}\label{eq:gmm-2}
\Pr (\| \BA \| \geq \sqrt{ \|\bSigma\|}(\sqrt{n} +\sqrt{m}+t)) \leq 2 e^{-t^2/2}.
\end{equation}
\item Let $\sigma_{\min}$ be the smallest singular value of $\bSigma$, then for any $t \geq 0$
\begin{equation}
\Pr ( (\| \BA \| \leq \sigma_{\min} (\sqrt{n} - \sqrt{m} - t) ) \leq 2 e^{-t^2/2},
\end{equation}
\end{enumerate}
\end{lemma}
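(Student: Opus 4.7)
The plan is to reduce each claim to a standard Gaussian concentration inequality: a sharp chi-squared tail bound for part 1, and the Davidson--Szarek inequality for the extreme singular values of standard Gaussian matrices for parts 2 and 3.

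For part 1, since $\overline{\ba}\sim\mathcal{N}(\boldsymbol{0},\bSigma/n)$, I would represent $\overline{\ba}$ as $(1/\sqrt{n})\bSigma^{1/2}\boldsymbol{g}$ with $\boldsymbol{g}\sim\mathcal{N}(\boldsymbol{0},\BI_m)$, so that
\[
\|\overline{\ba}\|^2 \;=\; \frac{1}{n}\,\boldsymbol{g}^{\top}\bSigma\boldsymbol{g}\;\leq\;\frac{\|\bSigma\|}{n}\,\|\boldsymbol{g}\|^2,
\]
and $\|\boldsymbol{g}\|^2$ is chi-squared with $m$ degrees of freedom. The Laurent--Massart inequality gives $\Pr(\|\boldsymbol{g}\|^2 - m\geq 2\sqrt{mu}+2u)\leq e^{-u}$. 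I would then verify that setting $u=mt^2/8$ handles the regime $t\leq 1$ and $u=mt/8$ handles $t\geq 1$, in each case producing $2\sqrt{mu}+2u\leq mt$ and hence $\|\boldsymbol{g}\|^2\leq m(1+t)$. The two exponential bounds combine into the stated $\max\{e^{-mt/8},e^{-mt^2/8}\}$.

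For part 2, writing $\BA=\BG\bSigma^{1/2}$ with $\BG\in\RR^{n\times m}$ having i.i.d.\ standard normal entries, submultiplicativity yields $\|\BA\|\leq\sqrt{\|\bSigma\|}\cdot\|\BG\|$, and the classical Davidson--Szarek bound (Gordon's inequality combined with Gaussian concentration of measure) gives $\Pr(\|\BG\|\geq\sqrt{n}+\sqrt{m}+t)\leq e^{-t^2/2}$; the factor $2$ in the lemma is slack. For part 3, I would interpret the stated quantity as the \emph{smallest} singular value of $\BA$ (with prefactor $\sqrt{\sigma_{\min}(\bSigma)}$ rather than $\sigma_{\min}(\bSigma)$), since the operator norm $\|\BA\|$ is much larger than the claimed right-hand side. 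Under that reading, the same factorization gives $\sigma_{\min}(\BA)\geq\sqrt{\sigma_{\min}(\bSigma)}\cdot\sigma_{\min}(\BG)$, and the companion Davidson--Szarek bound $\Pr(\sigma_{\min}(\BG)\leq\sqrt{n}-\sqrt{m}-t)\leq e^{-t^2/2}$ closes the argument.

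There is no serious obstacle: all three parts are direct corollaries of standard Gaussian concentration. The only real care needed is the case split in part 1 so that the single $\chi^2$ tail gives the stated maximum of two exponential factors, together with a careful interpretation of the prefactor $\sigma_{\min}$ in part 3 so that both sides of the inequality are dimensionally consistent.
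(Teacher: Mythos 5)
Your proposal is correct and follows essentially the same route as the paper: write $\overline{\ba}=n^{-1/2}\bSigma^{1/2}\boldsymbol{g}$ and bound the $\chi^2_m$ tail of $\|\boldsymbol{g}\|^2$ for part 1 (the paper uses a Bernstein-type $\chi^2$ bound where you use Laurent--Massart, but the case split $t\leq 1$ versus $t\geq 1$ yields the same $\max\{e^{-mt/8},e^{-mt^2/8}\}$), and factor $\BA=\BG\bSigma^{1/2}$ with the Davidson--Szarek/Vershynin extreme-singular-value bounds for parts 2 and 3. On part 3, you are right that the stated prefactor should be $\sqrt{\sigma_{\min}(\bSigma)}$; the paper keeps the event about $\|\BA\|$ and deduces the lower bound from $s_{\min}(\BG)\leq\|\BG\|$, whereas you bound $\sigma_{\min}(\BA)$ directly --- a harmless strengthening since $\sigma_{\min}(\BA)\leq\|\BA\|$.
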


\begin{proof}
Obviously, the sample mean $\overline{\ba}$ is a random vector satisfying $\mathcal{N}(\boldsymbol{0},\frac{1}{n} \bSigma)$. Due to the rotational invariance, it can be rewritten as
$\overline{\ba} = \frac{1}{\sqrt{n}}\bSigma^{1/2}\bw$
where $\bw\sim\mathcal{N}(\bzero, \I_m).$
Note that $\|\bw\|^2$ is a $\chi^2_m$ random variable with $\E(\|\bw\|^2) = m$ and 
\begin{equation*}
\Pr(\|\bw\|^2 - m \geq t) \leq \exp\left(-\frac{t^2}{8m}\right) \vee \exp\left(-\frac{t}{8}\right).
\end{equation*}
It is easy to see that $\|\overline{\ba}\| \leq \sqrt{\frac{m(1+t)\|\bSigma\|}{n}}$ holds with probability at least $1 - \max \{e^{-mt/8}, e^{-mt^2/8} \}.$

For the second and the third part, we use similar techniques by first rewriting $\BA$ as
$\BA = \BW \bSigma^{1/2}$ where $\BW$ is an $n\times m$ standard Gaussian random matrix. Corollary 5.35 in~\cite{Ver12} implies that $\sqrt{n} - \sqrt{m} -t \leq \|\BW\| \leq \sqrt{n} + \sqrt{m} + t$ holds with probability at least $1 - e^{-t^2/2}$. Therefore, 
\begin{equation*}
\sigma_{\min} (\sqrt{n} - \sqrt{m} - t) \leq \|\BA\| \leq \sqrt{\|\bSigma\|}(\sqrt{n} + \sqrt{m} + t)
\end{equation*}
holds with probability at least $1 - 2e^{-t^2/2}.$
 \end{proof}

\begin{lemma}\label{lem:gauss}
For two independent standard Gaussian random vectors $\bx$ and $\by$ in $\RR^m$, there holds
\begin{equation}\label{eq:gmm1}
\Pr(\bx^{\top}\bmu \geq t\|\bmu\|) \leq e^{-t^2/2}, \quad \forall t \geq 0,
\end{equation}
for a fixed deterministic vector $\bmu$. Also, we have
\begin{equation}\label{eq:gmm2}
\Pr( \bx^{\top}\bPsi \by \geq m\sqrt{t(1+t)}\|\bPsi\|) \leq 2 \max \{e^{-mt/8}, e^{-mt^2/8} \} , \quad \forall t \geq 0,
\end{equation}
for a fixed matrix $\bPsi$ and $t\geq 1$. Moreover,
\begin{equation}\label{eq:gmm3}
\Pr(\bx^{\top}\bSigma\bx - \Tr(\bSigma) \geq t) \leq \exp\left(-\frac{t^2}{8\|\bSigma\|_F^2}\right)\vee \exp\left(-\frac{t}{8\|\bSigma\|}\right), \quad \forall t \geq 0,
\end{equation}
for a fixed positive semidefinite matrix $\bSigma$.

\end{lemma}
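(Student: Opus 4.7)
The plan is to handle the three bounds separately, each time reducing to either a one-dimensional Gaussian tail bound or a standard $\chi^2$/sub-exponential concentration inequality by exploiting the rotational invariance of the Gaussian distribution.

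For \eqref{eq:gmm1}, observe that $\bx^{\top}\bmu$ is a centered univariate Gaussian with variance $\|\bmu\|^2$, so it is distributed as $\|\bmu\|\,g$ with $g\sim \mathcal{N}(0,1)$. The elementary Gaussian tail bound $\Pr(g\ge t)\le e^{-t^2/2}$ delivers the result immediately. This step is purely routine.

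For \eqref{eq:gmm2}, the strategy is conditioning followed by $\chi^2$ concentration. Conditional on $\by$, the quantity $\bx^{\top}\bPsi\by$ is a centered Gaussian with variance $\|\bPsi\by\|^2\le \|\bPsi\|^2\|\by\|^2$. Combining this conditional Gaussian tail bound with the two-sided $\chi^2_m$ tail inequality recalled in the proof of Lemma~\ref{lem:gmm} (namely $\Pr(\|\by\|^2-m\ge mt)\le \max\{e^{-mt/8},e^{-mt^2/8}\}$), I would split via
\begin{equation*}
\Pr(\bx^{\top}\bPsi\by \ge m\sqrt{t(1+t)}\|\bPsi\|) \le \Pr(\bx^{\top}\bPsi\by \ge m\sqrt{t(1+t)}\|\bPsi\|,\ \|\by\|^2\le m(1+t)) + \Pr(\|\by\|^2>m(1+t)).
\end{equation*}
On the good event $\{\|\by\|^2\le m(1+t)\}$, the conditional Gaussian tail bound yields $\exp\bigl(-\tfrac{m^2 t(1+t)\|\bPsi\|^2}{2\|\bPsi\|^2 m(1+t)}\bigr)=e^{-mt/2}$, while the bad event has probability at most $\max\{e^{-mt/8},e^{-mt^2/8}\}$. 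Since $e^{-mt/2}\le e^{-mt/8}$, the sum is bounded by $2\max\{e^{-mt/8},e^{-mt^2/8}\}$, giving the claim.

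For \eqref{eq:gmm3}, I would diagonalize $\bSigma = \BU \BD \BU^{\top}$ with $\BD=\diag(\lambda_1,\ldots,\lambda_m)$ and set $\bz=\BU^{\top}\bx$, which is again standard Gaussian by rotational invariance. Then $\bx^{\top}\bSigma\bx=\sum_{i=1}^m \lambda_i z_i^2$ is a weighted sum of independent $\chi^2_1$ variables. Each $\lambda_i(z_i^2-1)$ is centered and sub-exponential; the variance proxy sums to $\sum_i \lambda_i^2=\|\bSigma\|_F^2$ and the maximum orlicz norm scales as $\|\bSigma\|=\max_i\lambda_i$. A Bernstein-type inequality for sub-exponential variables (equivalently, the Laurent--Massart bound for quadratic forms of Gaussians) then gives the claimed two-regime tail with the Gaussian regime $\exp(-t^2/(8\|\bSigma\|_F^2))$ and the exponential regime $\exp(-t/(8\|\bSigma\|))$. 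The only care needed is to track the absolute constants so that the coefficient $1/8$ in the statement actually comes out; this is a calibration step rather than a conceptual obstacle, and it matches the identical constants already used in \eqref{eq:gmm-1} and \eqref{eq:gmm-2}. None of the three parts has a real obstacle: the main mild subtlety is the constant tracking in the third inequality.
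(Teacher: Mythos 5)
Your proposal is correct and follows essentially the same route as the paper's proof: a direct univariate Gaussian tail bound for \eqref{eq:gmm1}, conditioning on the event $\{\|\by\|^2\le m(1+t)\}$ via the $\chi^2_m$ tail plus a conditional Gaussian bound for \eqref{eq:gmm2}, and diagonalization of $\bSigma$ followed by a Bernstein-type inequality for the weighted sum of $\chi^2_1$ variables for \eqref{eq:gmm3}. No gaps.
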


\begin{proof}

Note that $\bx^{\top}\bmu/\|\bmu\|$ is a standard Gaussian random variable. For a standard Gaussian random variable $g$, we have $\Pr(g\geq t)\leq \frac{1}{2}e^{-t^2/2}$, which can be easily verified as follows:
\begin{align*}
\Pr(g\geq t) &= \frac{1}{\sqrt{2\pi}}\int_t^\infty e^{-x^2/2}dx\\
&=e^{-t^2/2}\frac{1}{\sqrt{2\pi}}\int_t^\infty e^{-\frac{(x+t)(x-t)}{2}}dx\\
&\leq e^{-t^2/2}\frac{1}{\sqrt{2\pi}}\int_t^\infty e^{-\frac{(x-t)^2}{2}}dx=\frac{1}{2}e^{-t^2/2}.
\end{align*}

For~\eqref{eq:gmm2}, first note that $\| \by \|^2$ is a chi-squared variable with $m$ degree of freedom, hence
\begin{equation*}
\|\bPsi \by\|\leq \|\bPsi\| \|\by\| \leq \sqrt{m(1+t)} \|\bPsi\|
\end{equation*}
holds with probability at least $1 - \max \{e^{-mt/8}, e^{-mt^2/8} \}$. Conditioned on the event $\{\| \bPsi \by\|\leq \sqrt{m(1+t)} \| \bPsi \| \}$, 
$\bx^{\top}\bPsi\by$ is a Gaussian random variable with variance at most $m(1+t)\|\bPsi\|^2$. As a result,
\begin{equation*}
\Pr(\bx^{\top}\bPsi\by \geq m\sqrt{t(1+t)}\|\bPsi\|) \leq e^{-mt/2}
\end{equation*}
and $\bx^{\top}\bPsi\by\geq m\sqrt{t(1+t)}$ holds with probability at least $1 - 2 \max \{e^{-mt/8}, e^{-mt^2/8} \}.$

For~\eqref{eq:gmm3}, we use the rotational invariance as well as the eigen-decomposition of $\bSigma$, i.e., $\bSigma = \BU^{\top}\diag(\lambda_1, \cdots, \lambda_m)\BU$ with $\lambda_i \geq 0$ for $1\leq i\leq m$. Therefore, $\bx^{\top}\bSigma\bx$ is the sum of weighted $\chi^2_1$ random variables where
\begin{equation*}
\bx^{\top}\bSigma\bx = \sum_{i=1}^m \lambda_i \xi^2_i, \quad \xi_i = (\BU \bx)_i, \quad \E(\bx^{\top}\bSigma\bx) = \Tr(\bSigma).
\end{equation*}
After applying Bernstein inequality, we get the desired result where $\max_{i}\lambda_i = \|\bSigma\|$ and $\sum_{i=1}^m \lambda_i^2 = \|\bSigma\|^2_F$.
 \end{proof}

\subsection{Stochastic ball model} \label{sec:sbm}
In this subsection, we  prove Corollary~\ref{cor:rbm} for the generalized stochastic ball model. It extends the results in~\cite{iguchi2015tightness,IguchiMPV17,AwasBCKVW15} where the probability distributions are assumed to the same and isotropic for all the clusters. The question is how large the minimal separation $\Delta = \min_{a \neq b} \| \bmu_a - \bmu_b \|$ should be in order to to ensure the exact recovery of the Peng-Wei relaxation with high probability. An outline of the proof of Corollary~\ref{cor:lower-rbm} is also given at the end of the subsection.

\begin{proof}[\bf Proof of Corollary~\ref{cor:rbm}]
It suffices to estimate $\|\overline{\BX}_a\|$, $h_{a,b}$ and $\tau_{a,b}$ for all $a\neq b$. We will bound those quantities on the premise that~\eqref{eq:rbm1} and~\eqref{eq:rbm2}, i.e., 
\begin{equation}\label{eq:rbm-cond}
\|\BX_a - \bone_{n_a}\bmu_a^{\top}\| \leq \sqrt{n_a(\|\bSigma_a\| + t)} \quad\mbox{and}\quad \|\bc_a - \bmu_a\|  \leq t,
\end{equation}
hold  for all $1\leq a\leq k$ with probability for all $1\leq a\leq k$, at least $1 - 4km\exp(-\frac{Nw_{\min} t^2}{2 + 4t/3}).$
\paragraph{Estimation of $\|\overline{\BX}_a\|$:}
By the triangle inequality, the operator norm of $\overline{\BX}_a$ can be bounded  from above as 
\begin{align*}
\|\overline{\BX}_a\| 
& = \|\BX_a - \bone_{n_a}\bc_a^{\top}\| \\
& \leq \|\BX_a - \bone_{n_a} \bmu_a^{\top}\| + \sqrt{n_a} \|\bc_a - \bmu_a\| \\
& \leq \sqrt{n_a(\| \bSigma_a \| + t)} + t \sqrt{n_a}  
\end{align*}
for all $1\leq a\leq k$ with probability at least $1 - 4km\exp\left( -\frac{Nw_{\min}t^2}{2 + 4t/3} \right).$

\paragraph{Estimation of $\tau_{a,b}$ and $h_{a,b}$:}
Recall that
$\tau_{a,b} = \max\{\max\{ \overline{\BX}_a\bw_{a,b}\}, \max\{\overline{\BX}_b \bw_{b,a}\} \}.$ For each entry of $\overline{\BX}_a\bw_{a,b}$, we have 
\begin{equation*}
(\overline{\BX}_a\bw_{a,b})_i \leq \| \bx_{a,i} - \bmu_{a}\| + \| \bc_a -  \bmu_{a} \| \leq 1 + t
\end{equation*}
which follows from $\|\bx_{a,i} - \bmu_a\| \leq 1$ and~\eqref{eq:rbm-cond}.
A similar bound holds for $\overline{\BX}_b\bw_{b,a}$ and thus under the event where \eqref{eq:rbm-cond} holds, $\tau_{a,b} \leq 1 + t$ holds for all $a\neq b$ with probability at least $1 - 4km\exp({-\frac{Nw_{\min}t^2}{2 + 4t/3}})$.

For $h_{a,b}$, it has a simple lower bound: 
\begin{equation*}
h_{a,b} = \| \bc_a - \bc_b \| \geq \| \bmu_a - \bmu_b \| - \| \bc_a - \bmu_a\| - \| \bc_b - \bmu_b\| \geq \Delta - 2t.
\end{equation*}
Therefore, a lower bound of $\frac{1}{2}h_{a,b} - \tau_{a,b}$ is
\begin{equation*}
\frac{1}{2}h_{a,b} - \tau_{a,b} \geq \frac{1}{2}\Delta - t - (1 + t) = \frac{1}{2}\Delta - 2t - 1,
\end{equation*}
which holds uniformly over all $(a,b)$ with probability at least $1 - 4km\exp({-\frac{Nw_{\min}t^2}{2 + 4t/3}}).$
\paragraph{Proximity condition for stochastic ball model:}
Now we wrap up our discussion and apply the proximity condition~\eqref{eq:prox1}. For each $a$, it follows from $\| \overline{\BX}_a\| \leq ( \sqrt{\|\bSigma_a\| + t} + t) \sqrt{n_a}$ that
\begin{align*}
\sum_{a=1}^k \|\overline{\BX}_l\|^2 
& \leq \sum_{a=1}^k ( \|\bSigma_a\| + t + 2t\sqrt{\|\bSigma_a\| + t} + t^2 ) n_a \\
& \leq ( \sigma^2_{\max} + t + 2t(\sigma_{\max} + \sqrt{t}) + t^2 ) N \\
& \leq \left[ (\sigma_{\max} + t)^2 + t + 2t^{3/2}\right] N,
\end{align*}
where the second line follows from $\|\bSigma_a\|\leq \sigma^2_{\max}$ and $\sqrt{\|\bSigma_a\| + t} \leq \sqrt{\|\bSigma_a\|} + \sqrt{t}.$

Therefore, for all pairs of $a$ and $b$, the proximity condition~\eqref{eq:prox1} for the generalized stochastic ball model is guaranteed if
\begin{equation}\label{eq:rbm-prox}
\Delta \geq 2 + 4t  + \sqrt{  \frac{ 2\left( (\sigma_{\max} + t)^2 + t + 2t^{3/2}\right) }{w_{\min}} },
\end{equation}
which holds with probability at least $1 - 4km \exp({-\frac{Nw_{\min}t^2}{2+4t/3}})$.
Now we choose $t = \sqrt{\frac{4\log(4kmN^{\gamma}) }{Nw_{\min}}}$. We further assume that $N \geq \frac{4}{w_{\min}} \log(4 kmN^{\gamma})$, then $t \leq 1$ and ~\eqref{eq:rbm-prox} holds with probability at least 
\begin{equation*}
1 - 4km \exp\left(-\frac{Nw_{\min} \cdot t^2}{2 + 4t/3}\right) \geq 1 - 4km \exp\left(-\frac{1}{4} Nw_{\min} \cdot t^2 \right) \geq 1 - N^{-\gamma}.
\end{equation*}
Note that $w_{\min} \leq \frac{1}{k} \leq \frac{1}{2}$ and $t \leq 1$. By enlarging the right hand side of \eqref{eq:rbm-prox} as the following,
\begin{align*}
2 + 4t  + \sqrt{  \frac{ 2\left( (\sigma_{\max} + t)^2 + t + 2t^{3/2}\right) }{w_{\min}} }
&\leq 2 + \sqrt{\frac{2}{w_{\min}}} \sigma_{\max} + \sqrt{\frac{t}{w_{\min}}} + (4 + \sqrt{\frac{2}{w_{\min}}}) t +  \sqrt{\frac{2 t^{3/2}}{w_{\min}}}  \\
& \leq 2 + \sqrt{\frac{2}{w_{\min}}} \sigma_{\max} + 7 \sqrt{\frac{t}{w_{\min}}},
\end{align*}
we derive a sufficient condition of \eqref{eq:rbm-prox} which guarantees the proximity condition \eqref{eq:prox1} for the stochastic ball models with probability at least $1 - N^{-\gamma}$:
\begin{equation*}
\Delta \geq 2 + \sqrt{\frac{2}{w_{\min}}} \sigma_{\max} + 7 \sqrt{\frac{t}{w_{\min}}}.
\end{equation*}

In particular, if $n_a = n$ for all $a$ and each $\mathcal{D}_a$ is the uniform distribution over $\RR^m$, there holds $\sigma^2_{\max} = \|\bSigma_a\| = \frac{1}{m+2}$ and \eqref{eq:rbm-prox} can be simplified into
\begin{equation*}
\Delta \geq 2 + \sqrt{\frac{2k}{m+2}} + 7\sqrt{tk}
\end{equation*}
which completes the proof.
 \end{proof}

The necessary lower bound (Theorem~\ref{thm:lower}) can also be applied to the generalized stochastic ball model. For the sake of simplicity, we restrict our discussion to the special case where distributions are all uniform distributions over the unit balls and clusters are balanced, i.e., $n_a = n, \, \forall 1 \leq a \leq k$.
\begin{proof}[\bf Proof outline of Corollary~\ref{cor:lower-rbm}: ]
For each pair of $a$ and $b$, $\tau_{a,b} > 1 - \epsilon$ with high probability for any $\epsilon > 0$, provided that $N$ is large. As for the operator norms, Theorem 5.41 in~\cite{Ver12} implies that $\| \overline{\BX}_a \| \geq (1-\epsilon) \sqrt{\frac{n}{m+2}}$ with high probability. Simple calculations show that the necessary lower bound \eqref{eq:necessary} is equivalent to
\begin{equation} \label{eq:nec_eqv}
h_{a,b} \geq \tau_{a,b} + \sqrt{\tau_{a,b}^2 + \frac{2}{n} \max{ \| \overline{\BX}_a \|^2} }, \quad \forall a \neq b.
\end{equation}
Adding up all these together, we yield the necessary lower bound for the special case as in Corollary~\ref{cor:lower-rbm}.
 \end{proof}

\subsection{Gaussian mixture model}\label{sec:gmm}

In this subsection, we  prove Corollary~\ref{cor:gmm} for the Gaussian mixture model. We  still focus on the minimal separation condition for the exactness of the Peng-Wei relaxation. Denote $p(t) = \max \{e^{-mt/8}, e^{-mt^2/8} \}$.
\begin{proof}[\bf Proof of Corollary~\ref{cor:gmm}: ]
Let $N$ be the number of points drawn from the Gaussian mixture model and $n_a$ be the number of points belonging to $\mathcal{N}(\bmu_a, \bSigma_a)$. To simplify our analysis, we assume $n_a = w_a N$ and $\bx_{a,i}\sim \mathcal{N}(\bmu_a, \bSigma_a)$ for all $1\leq a\leq k.$

\paragraph{Estimation of $\|\overline{\BX}_a\|$:}
Let $\BX_a \in \RR^{n_a \times m}$ be the data drawn from $\mathcal{N}(\bmu_a, \bSigma_a)$. Lemma~\ref{lem:gmm} states that the sample mean $\bc_a = \frac{1}{n_a}\sum_{i=1}^{n_a}\bx_{a,i}$ satisfies $\|\bc_a - \bmu_a\| \leq \sqrt{\frac{m(1+t)\|\bSigma_a\|}{n_a}}$ for all $a$ with probability at least $1 - k \cdot p(t)$. 
Considering $\|\overline{\BX}_a\|$, it obeys
\begin{align*}
\|\overline{\BX}_a\| & \leq \|\BX_a - \bone_{n_a}\bmu_a^{\top}\| + \sqrt{n_a}\|\bc_a - \bmu_a\| \\
& \leq \sqrt{\|\bSigma_a\|}(\sqrt{n_a} + \sqrt{m} + \sqrt{mt} + \sqrt{m(1+t)}) \\
& \leq \sqrt{\|\bSigma_a\|} (\sqrt{n_a} + 2\sqrt{m}(1 + \sqrt{t}))
\end{align*}
for all $1\leq a\leq k$
with probability at least $1 - 2ke^{-mt/2}$,  where we have used~\eqref{eq:gmm-2} in the second line. It follows that
\begin{align*}
\frac{\sum_{l=1}^k \| \overline{\BX_l} \|^2 (n_a + n_b)}{4 n_a n_b} 
&\leq \frac{1}{2N} \left( \sum_{l=1}^k \| \bSigma_l \| \left(n_a + 8m(1+t)\right) \right) \left( \frac{1}{w_a} + \frac{1}{w_b} \right) \\
& \leq \frac{\sigma^2_{\max}}{Nw_{\min}}  \left(  N + 8km(1 + t) \right) \\
& \leq \frac{\sigma^2_{\max}}{w_{\min}} \left( 1 + \frac{8km(1+t)}{N}\right),
\end{align*}
where $w_{\min} = \frac{1}{N}\min_{1\leq l\leq k} n_l$ and $w_{\min} \leq \frac{1}{k}.$
Therefore, for all $a\neq b$ and all $t \geq 0$, the right hand side of \eqref{eq:prox1} is bounded from above by
\begin{align}
\sqrt{\frac{\sum_{l=1}^k \| \overline{\BX_l} \|^2 (n_a + n_b)}{4 n_a n_b}} 
& \leq 
\sqrt{\frac{\sigma^2_{\max}}{w_{\min}} \left( 1 + \frac{8km(1+t)}{N}\right)} \nonumber \\
&\leq \frac{\sigma_{\max}}{\sqrt{w_{\min}}} \left( 1 + \sqrt{\frac{8km(1+t)}{N}}\right)   \label{eq:gmm-upbd}
\end{align}
with probability at least $1 - k \cdot p(t) -2ke^{-mt/2}$, which is greater than $1 - 3k \cdot p(t)$.

\paragraph{Estimation of $\tau_{a,b}$ and $h_{a,b}$: }
For $h_{a,b}$, it follows from Lemma~\ref{lem:gmm} that
\begin{align}
h_{a,b} & = \|\bc_a - \bc_b\| \geq \|\bmu_a - \bmu_b\| - \|\bc_a - \bmu_a\| - \|\bc_b - \bmu_b\| \nonumber \\
& \geq \|\bmu_a - \bmu_b\|- \sqrt{m(1+t)\sigma^2_{\max}}\left(\frac{1}{\sqrt{n_a}} + \frac{1}{\sqrt{n_b}}\right) \nonumber \\
& \geq \|\bmu_a - \bmu_b\|- 2\sigma_{\max} \sqrt{\frac{ m(1+t)}{N w_{\min}}} \label{eq:gmm-h1}
\end{align}
holds with probability at least $1 - 2ke^{-mt/8}$ for any $a$ and $b$. Further assume $N \geq \frac{16 \sigma_{\max}^2 m(1+t)}{\Delta^2 w_{\min}}$, then
\begin{equation} \label{eq:gmm-h2}
h_{a,b} \geq \frac{\|\bmu_a - \bmu_b\|}{2}.
\end{equation}

Note that $\bu_{a,b}$ is defined as $\bu_{a,b} = \overline{\BX}_a \bw_{a,b}$ and each entry of $\bu_{a,b}$ is given by
$(\bu_{a,b})_i = \frac{1}{h_{a,b}} (\bx_{a,i} - \bc_a)^{\top} (\bc_a - \bc_b)$. To get an upper bound for $\bu_{a,b}$, it suffices to bound $(\bx_{a,i} - \bc_a)^{\top}(\bc_a - \bc_b)$, which can be partitioned into three terms:
\begin{equation*}
(\bx_{a,i} - \bc_a)^{\top} (\bc_a - \bc_b)  = \underbrace{ (\bx_{a,i} - \bmu_a )^{\top} (\bc_a - \bmu_a) }_{J_1}  + \underbrace{(\bx_{a,i} - \bc_a)^{\top}(\bmu_a - \bc_b)}_{J_2} - \| \bc_a - \bmu_a\|^2.
\end{equation*}
\begin{enumerate}[1.]
\item For $J_1$, note that $\bx_{a,i} - \bmu_a$ and $\bc_a - \bmu_a$ are not completely independent from each other. Thus we further decompose $J_1$ into
\begin{equation*}
(\bx_{a,i} - \bmu_a)^{\top} (\bc_a - \bmu_a) = \frac{1}{n_a}\|\bx_{a,i} - \bmu_a\|^2 + \frac{1}{n_a}(\bx_{a,i} - \bmu_a)^{\top} \left(\sum_{j\neq i} (\bx_{a,j} - \bmu_a)\right).
\end{equation*}
For the first term above,~\eqref{eq:gmm3} implies $\|\bx_{a,i} - \bmu_a\|^2\leq m(1+t)\|\bSigma_a\|$ with probability at least $1 - e^{-mt/8}.$
For the second term, we can reformulate it as 
\begin{equation*}
 \frac{1}{n_a}(\bx_{a,i} - \bmu_a)^{\top} \left(\sum_{j\neq i} (\bx_{a,j} - \bmu_a)\right) = \left\lag\bw, \frac{1}{n_a}\bSigma_a^{1/2} \sum_{j\neq i}(\bx_{a,j} - \bmu_a)  \right\rag
\end{equation*}
where $\bw\sim \mathcal{N}(\bzero, \I_m)$ and $\bw$ is independent of $\frac{1}{n_a} \sum_{j\neq i}(\bx_{a,j} - \bmu_a) \sim \mathcal{N}\left( \bzero, \frac{n_a - 1}{n_a^2}\bSigma_a \right)$. Applying~\eqref{eq:gmm2} implies
\begin{equation*}
(\bx_{a,i} - \bmu_a)^{\top} \left(\sum_{j\neq i} (\bx_{a,j} - \bmu_a)\right) \leq m\|\bSigma_a\|  \sqrt{\frac{t(1+t)}{n_a}}
\end{equation*}
with probability at least $1 - 2 \cdot p(t)$. So we can conclude that 
\begin{equation*} 
J_1 \leq m\|\bSigma_a\|\left( \frac{1 +t }{n_a} +   \sqrt{\frac{t(1+t)}{n_a}}\right) 
\end{equation*}
for all $a$ with probability at least $1 - 3 N \cdot p(t)$, for all $t \geq 0$.

\item For $J_2$, we decompose it into two terms:
\begin{equation*}
(\bx_{a,i} - \bc_a)^{\top}(\bmu_a - \bc_b) = (\bx_{a,i} - \bc_a)^{\top}(\bmu_a - \bmu_b)  + (\bx_{a,i} - \bc_a)^{\top}(\bmu_b - \bc_b).
\end{equation*}
Since $(\bx_{a,i} - \bc_a)^{\top}(\bmu_a - \bmu_b)\sim \mathcal{N}(0, \frac{n_a-1}{n_a}(\bmu_a- \bmu_b)^{\top}\bSigma_a(\bmu_a - \bmu_b))$,~\eqref{eq:gmm1} indicates 
\begin{equation*}
(\bx_{a,i} - \bc_a)^{\top}(\bmu_a - \bmu_b) \leq \sqrt{s (\bmu_a- \bmu_b)^{\top}\bSigma_a(\bmu_a - \bmu_b)}
\end{equation*}
for all $(a,b,i)$ with probability at least $1 - kNe^{-s/2}.$
On the other hand,~\eqref{eq:gmm2} directly gives 
\begin{equation*}
(\bx_{a,i} - \bc_a)^{\top}(\bmu_b - \bc_b) \leq m\sqrt{\frac{ t(1+t)\|\bSigma_a\|\|\bSigma_b\|}{n_b}}  \leq m\sigma^2_{\max}\sqrt{\frac{ t(1+t)}{n_b}} 
\end{equation*}
for all $(a,b,i)$
with probability at least $1 - 2kN \cdot p(t).$ Therefore,
\begin{equation*} 
J_2 \leq \sqrt{s (\bmu_a- \bmu_b)^{\top}\bSigma_a(\bmu_a - \bmu_b)} + m\sigma^2_{\max}\sqrt{\frac{ t(1+t)}{n_b}} 
\end{equation*}
holds with probability at least $1 - 2kN \cdot p(t) - kNe^{-s/2}$, for all $s,t \geq 0$.
\end{enumerate}
Using the estimation of $J_1$ and $J_2$, we can see that, for all $(a,b,i)$,
\begin{equation*}
(\bx_{a,i} - \bc_a)^{\top}(\bc_a - \bc_b) \leq  \sqrt{s(\bmu_a - \bmu_b)^{\top}\bSigma_a(\bmu_a- \bmu_b)} + 3m\sigma^2_{\max} \frac{1+t}{\sqrt{\min\{n_a, n_b\}} }
\end{equation*}
holds with probability at least $1 - kN(4 \cdot p(t) + e^{-s/2})$. Since $(\bu_{a,b})_i = \frac{1}{h_{a,b}} (\bx_{a,i} - \bc_a)^{\top}(\bc_a - \bc_b),$, if $N \geq \frac{16 \sigma_{\max}^2 m(1+t)}{\Delta^2 w_{\min}}$, then by \eqref{eq:gmm-h2} there hold,
\begin{align} \label{eq:gmm-tau}
\tau_{a,b} = \max \{ \max\{\bu_{a,b}\} , \max\{\bu_{b,a}\}  \}
\leq 2 \sqrt{s}\sigma_{\max} + \frac{ 6m\sigma^2_{\max}(1+t)}{\Delta \sqrt{Nw_{\min}}}.
\end{align}

\paragraph{Proximity condition for Gaussian mixture model}
By combing~\eqref{eq:gmm-upbd}, \eqref{eq:gmm-h1} and \eqref{eq:gmm-tau}, we have shown the proximity condition is satisfied with probability at least $1 - kN (5 \cdot p(t) + e^{-s/2})$ if
\begin{equation*}
\Delta  \geq \frac{2 \sigma_{\max}}{\sqrt{w_{\min}}} + 4\sigma_{\max} \sqrt{s} + 2 \sigma_{\max} (4\sqrt{k} + 1)\sqrt{\frac{m(1+t)}{Nw_{\min}}}  +  \frac{ 6m\sigma^2_{\max}(1+t)}{\Delta \sqrt{Nw_{\min}}},
\end{equation*}
provided that $N \geq \frac{16 \sigma_{\max}^2 m(1+t)}{\Delta^2 w_{\min}}$. These two inequalities are in turn implied by
\begin{equation} \label{eq:gmm-sep1}
\Delta  \geq \frac{2 \sigma_{\max}}{\sqrt{w_{\min}}} + 4\sigma_{\max} \sqrt{s} + 10 \sigma_{\max} \sqrt{\frac{km(1+t)}{Nw_{\min}}} +  \frac{ 6m\sigma_{\max}(1+t)}{ \sqrt{N}} 
\end{equation}
Here by choosing $t =\max \left\{ 8\log(kN^{1+\gamma}) /m, \sqrt{ 8\log(kN^{1+\gamma}) /m} \right\}$ and $s = 2\log(kN^{1 + \gamma})$ where $\gamma>0$, then the proximity condition holds 
with probability at least
\begin{equation*}
1 - kN (5 \cdot p(t) + e^{-s/2}) \geq 1 - 6N^{-\gamma}.
\end{equation*}
To simplify the expression, we assume $N=({m^2 k^2 \log(k)}/{w_{\min}}) u$, where $u \gg 1$. Denote $q(N;m,k,w_{\min})$ the sum of the last two terms of \eqref{eq:gmm-sep1} divided by $\sigma_{\max}$. We have the following asymptotic analysis:
\begin{equation*}
q(N;m,k,w_{\min}) \leq \sqrt{{\cal O} \left( \frac{1 + \log(km) + \log(u)}{kmu} \right) } + {\cal O} \left( \frac{1}{\sqrt{u}} + \frac{\log(k)}{k \sqrt{u}} + \frac{\log(N)}{\sqrt{N}} \right) = o(1).
\end{equation*}
This completes the proof of Corollary~\ref{cor:gmm}.

 \end{proof}

\section*{Acknowledgement}

Y.~Li, S.~Ling, T.~Strohmer, and K.~Wei acknowledge support from the NSF via grants DMS 1620455 and DMS 1737943.

\bibliographystyle{abbrv}
\bibliography{arXiv-kmeans-R1}

\end{document}